\DeclareMathOperator{\supp}{supp}
\DeclareMathOperator{\Ai}{Ai}
\DeclareMathOperator{\sgn}{sgn}
\DeclareMathOperator{\signature}{sign}
\DeclareMathOperator{\Hess}{Hess}
\newtheorem{theorem}{Theorem}[section]
\newtheorem{lemma}[theorem]{Lemma}
\newtheorem{corollary}[theorem]{Corollary}
\newtheorem{rmk}{Remark}
\crefname{cor}{corollary}{corollaries}
\newcommand{\jBra}[1]{\langle #1 \rangle}
\newcommand{\bbZ}{\mathbb{Z}}
\newcommand{\bbR}{\mathbb{R}}
\newcommand{\bbC}{\mathbb{C}}
\newcommand{\bbOne}{\mathds{1}}
\newcommand{\cR}{\mathcal{R}}
\newcommand{\cT}{\mathcal{T}}
\newcommand{\cS}{\mathcal{S}}
\newcommand{\rmI}{\mathrm{I}}
\newcommand{\rmII}{\mathrm{II}}
\newcommand{\rmIII}{\mathrm{III}}
\newcommand{\rmIV}{\mathrm{IV}}
\newcommand{\philin}{\phi_\text{lin}}
\newcommand{\btext}[1]{\left\{\textup{#1}\right\}}
\newcommand{\bnpl}{\btext{non-pseudolocal terms}}
\newcommand{\bbetter}{\btext{better}}
\newcommand{\bsim}{\btext{similar terms}}
\newcommand{\bsime}{\btext{similar or easier terms}}
\newcommand{\nablaes}{\nabla_{\eta,\sigma}}
\crefname{equation}{}{}
\numberwithin{equation}{section}
\title{Long time decay and asymptotics for the complex mKdV equation}
\author{Gavin Stewart}
\address{Courant Institute of Mathematical Sciences, New York University, 251 Mercer St., New York, NY 10012, USA}
\address{Department of Mathematics, Rutgers University, 110 Frelinghuysen Rd., Piscataway, NJ, 08854, USA}
\email{\href{mailto:gavin.stewart@rutgers.edu}{gavin.stewart@rutgers.edu}}
\begin{document}

\begin{abstract}
    We study the asymptotics of the complex modified Korteweg-de Vries equation $\partial_t u + \partial_x^3 u = -|u|^2 \partial_x u$, which can be used to model vortex filament dynamics.  In the real-valued case, it is known that solutions with small, localized initial data exhibit modified scattering for $|x| \geq t^{1/3}$ and behave self-similarly for $|x| \leq t^{1/3}$.  We prove that the same asymptotics hold for complex mKdV.  The major difficulty in the complex case is that the nonlinearity cannot be expressed as a derivative, which makes the low-frequency dynamics harder to control.  To overcome this difficulty, we introduce the decomposition $u = S + w$, where $S$ is a self-similar solution with the same mean as $u$ and $w$ is a remainder that has better decay.  By using the explicit expression for $S$, we are able to get better low-frequency behavior for $u$ than we could from dispersive estimates alone.  An advantage of our method is its robustness: It does not depend on the precise algebraic structure of the equation, and as such can be more readily adapted to other contexts. 
\end{abstract}

\maketitle

\section{Introduction}

We study the complex modified Korteweg-de Vries (mKdV) equation
\begin{equation}\label{eqn:cmkdv}
    \partial_t u + \partial_x^3 u = -|u|^2 \partial_x u
\end{equation}
This equation can be used (after a change of variables) to describe vortex filament dynamics.  Beginning with the higher-order localized induction equation
\begin{equation}\label{eqn:LIE}
    \partial_t X = \partial_s X \times \partial_s^2 X + a \bigl(\partial_s^3 X + \frac{3}{2} \partial_s^2 X \times (\partial_s X \times \partial_s^2 X)
\end{equation}
for the vortex filament $X(s,t)$, Lamb~\cite{lambSolitonsMovingSpace1977} and Fukumoto and Miyazaki~\cite{fukumotoThreedimensionalDistortionsVortex1991} define a new complex function $\psi$ via the Hashimoto transform
\begin{equation*}
    \psi(s, t) = \kappa(s, t) \exp\biggl( i \int_0^s \tau(\sigma,t)\,d\sigma\biggr)
\end{equation*}
where $\kappa$ and $\tau$ are, respectively, the curvature and torsion of the vortex filament (c.f.~\cite{hasimotoSolitonVortexFilament1972}). By rewriting~\eqref{eqn:LIE} using the Serret-Frenet formulas, the authors find that $\psi$ satisfies the Hirota equation
\begin{equation*}
    i \partial_t \psi + \partial_s^2 \psi + \frac{1}{2} |\psi|^2 \psi - i a\bigl( \partial_s^3 \psi + \frac{3}{2} |\psi|^2 \partial_s \psi\bigr)= 0
\end{equation*}
Performing the change of variables
\begin{equation*}
    u(x, t) = \sqrt{\frac{3}{2} a} e^{-\frac{i}{3a}s - \frac{i}{27 a^3}t} \psi\bigl(-x - \frac{1}{3a^2}t, \frac{t}{a}\bigr)
\end{equation*}
we see that $u$ satisfies~\eqref{eqn:cmkdv}.  Equation~\eqref{eqn:cmkdv} also appears as a model in nonlinear optics, where it models higher-order corrections for waves traveling in a nonlinear medium~\cite{rodriguezStandardEmbeddedSolitons2003,heFewcycleOpticalRogue2014,biswasOpticalSolitonsPresence2018}.  It is completely integrable and has infinitely many conserved quantities, the first few of which are momentum, angular twist, and energy~\cite{ancoTravelingWavesConservation2012}:
\begin{equation*}
    \mathcal{P}(u) = \int |u|^2\;dx,\qquad \mathcal{W}(u) = \int |u|^2\arg(u)_x\;dx, \qquad \mathcal{E}(u) = \int \frac{1}{2}|\partial_x u|^2 - \frac{1}{4}|u|^4\;dx
\end{equation*}

\subsection{Known results}

The smoothing and maximal function estimates developed by Kenig, Ponce, and Vega in~\cite{kenigWellposednessScatteringResults1993,kenigGeneralizedKortewegdeVries1989} can be used to show that~\eqref{eqn:cmkdv} is locally wellposed in $H^s$, $s \geq \frac{1}{4}$.  For real initial data, Colliander, Keel, Staffilani, Takaoka, and Tao show in~\cite{collianderSharpGlobalWellposedness2003} that for $s > 1/4$ solutions exist globally, and global wellposedness for real initial data at the $s = 1/4$ endpoint was shown independently by Kishimoto in~\cite{kishimotoWellposednessCauchyProblem2009} and by Guo in~\cite{guoGlobalWellposednessKorteweg2009}.   If we require uniformly continuous dependence on the initial data, then the $s = 1/4$ endpoint is sharp; see~\cite{kenigIllposednessCanonicalDispersive2001} for the focusing case and~\cite{christAsymptoticsFrequencyModulation2003} for the defocusing case.  Local wellposedness has also been show in the weighted Sobolev spaces $H^{s} \cap |x|^{-m}L^2$ for $s \geq \max(2m, 1/4)$, in~\cite{katoCauchyProblemGeneralized1983,fonsecaPersistencePropertiesFractional2015}.  For the equation set on the torus, wellposedness was shown by Chapouto in a range of Fourier-Lebesgue spaces~\cite{chapoutoRemarkWellposednessModified2021,chapoutoRefinedWellPosednessResult2021}.  Relaxing the requirement of uniformly continuous dependence on the initial data, Harrop-Griffiths, Killip, and Visan used the complete integrability of the equation to prove a weaker form of wellposedness in $H^s$ for $s > - 1/2$ in ~\cite{harrop-griffithsSharpWellposednessCubic2020}.  They also show that for $s \leq -1/2$, the equation exhibits instantaneous norm inflation, so no wellposedness result is possible.  Outside the scale of spaces $H^s$, Gr"unrock proved in~\cite{grunrockImprovedLocalWellposedness2004} that the equation is locally wellposed for real-valued initial data in the spaces $\widehat{H^s_r}$ defined by the norms $\lVert u \rVert_{\widehat{H^s_r}} := \lVert \jBra{\xi} \hat{u} \rVert_{L^{r'}}$ for the parameter range $\frac{4}{3} < r \leq 2$, $s \geq \frac{1}{2} - \frac{1}{2r}$.  The parameter range was later improved by Gr\"unrock and Herr in~\cite{grunrockLocalWellposednessModified2009} to $1 < r \leq 2$, $s \geq \frac{1}{2} - \frac{1}{2r}$, which has the scaling-critical space $\widehat{H}^0_1 = \mathcal{F}L^\infty$ as the (excluded) endpoint.  Using an approximation argument, Correia, C\^ote, and Vega were able to show a form of wellposedness in a critical space contained in $\mathcal{F}L^\infty$ in~\cite{correiaSelfSimilarDynamicsModified2020}.

The long-time asymptotics of the real-valued mKdV equation have received a great deal of study.  The first complete results were given in~\cite{deiftSteepestDescentMethod1993}, where Deift and Zhou used the complete integrability of the equations to obtain asymptotic formulas using the inverse scattering transform.  The first results not depending on complete integrability were derived by Hayashi and Naumkin in~\cite{hayashiLargeTimeBehavior1999,hayashiModifiedKortewegVries2001}, where it was shown that solutions starting with small, localized data decay at the linear rate and exhibit {Painlev\'{e}} asymptotics in the self-similar region $|x| \leq t^{1/3}$.  These results were extended to proving modified scattering in the region $x \leq -t^{-1/3}$ and more rapid decay in the region $x \geq t^{-1/3}$ by Harrop-Griffiths in~\cite{harrop-griffithsLongTimeBehavior2016} using the method of testing with wave packets developed by Ifrim and Tataru in~\cite{ifrimGlobalBoundsCubic2015,ifrimTwoDimensionalWater2016}.  These results were also proved independently by Germain, Pusateri, and Rousset in~\cite{germainAsymptoticStabilitySolitons2016} using the method of space-time resonances, and it was further shown that solitons are stable under small, localized perturbations, and that for long times the perturbation has the same asymptotics as in the small data case.  More recently, Correia, C\^ote, and Vega extended these results in~\cite{correiaSelfSimilarDynamicsModified2020} by allowing the solution to have a jump discontinuity at $0$ in Fourier space, which corresponds to studying the dynamics of vortex filaments with corners, see~\cite{perelmanSelfsimilarPlanarCurves2007}.  For the complex equation with nonlinearity $\partial_x(|u|^2 u)$, asymptotic results in the region $x/t < 0$ were given in~\cite{zhangSpectralAnalysisLongtime2022} using the Deift-Zhou steepest descent method.  Results for the Sasa-Satsuma mKdV equation (nonlinearity $2|u|^2 \partial_x u + u \partial_x |u|^2)$ are given for $x/t > 0$ in~\cite{liuLongtimeAsymptoticsSasa2019} and in the self-similar region $|x| \leq t^{-1/3}$ in~\cite{huang_asymptotics_2020}.  Except for results in the region $x/t < 0$ given in~\cite{huangLongtimeAsymptoticHirota2015} using the steepest descent method, the asymptotics of the complex mKdV equation~\eqref{eqn:cmkdv} do not appear to have been studied.

Asymptotic results for the mKdV equation hinge on the decay properties of solutions to the linear Airy\footnote{Note that this should not be confused with the Airy equation $y'' - xy = 0$, which occurs (in a rescaled form) in~\Cref{sec:self-sim}.} equation
\begin{equation*}
    \left\{\begin{array}{l}
        \partial_t u_\text{lin} + \partial_x^3 u_\text{lin} = 0\\
        u_\text{lin}(t=0) = u_0
    \end{array}\right.
\end{equation*}
If $u_0$ is localized and regular, then the asymptotic behavior of $u_\text{lin}$ will be similar to that of the fundamental solution $F(x,t) = (3t)^{-1/3}\Ai((3t)^{-1/3}x)$.  In particular,
\begin{equation}\label{eqn:linear-estimates}
    |u_\text{lin}(x,t)| \lesssim t^{-1/3} \jBra{x/t^{1/3}}^{-1/4},\qquad\qquad |\partial_x u_\text{lin}(x,t)| \lesssim t^{-2/3} \jBra{x/t^{1/3}}^{1/4}
\end{equation}
so $|u_\text{lin} \partial_x u_\text{lin}| \lesssim t^{-1}$, which suggests the problem~\eqref{eqn:cmkdv} (like its real-valued counterpart), should be critical with respect to scattering.

\subsection{Main results}

We will consider the equation with data prescribed at $t=1$:
\begin{equation}\label{eqn:cmkdv-t-1}
    \left\{\begin{array}{c}
        \partial_t u + \partial_x^3 u = -|u|^2 \partial_x u\\
        u(t=1) = e^{-\partial_x^3}u_*
    \end{array}\right.
\end{equation}
We will prove the following result:
\begin{theorem}\label{thm:main-theorem}
    There exists an $\epsilon_0 > 0$ such that if $\epsilon < \epsilon_0$, and $u_* \in H^2$ satisfies
    \begin{equation}\label{eqn:main-thm-initial-bdd}
        \lVert \hat{u}_* \rVert_{L^\infty} + \lVert x u_* \rVert_{L^2} \leq \epsilon
    \end{equation}
    then the solution $u$ to~\eqref{eqn:cmkdv-t-1} exists on $[1,\infty)$ and has the following asymptotics in different regions of physical space:
    \paragraph{$x \geq t^{1/3}$} In this region, we have rapid decay of the form
    \begin{equation}\label{eqn:positive-x-asymp}
        |u(x,t)| \lesssim \epsilon t^{-1/3} (x t^{-1/3} )^{-3/4}
    \end{equation}
    \paragraph{$x \leq -t^{-1/3}$} Here, we have modified scattering
    \begin{equation}\label{eqn:negative-x-asymp}\begin{split}
        u(x,t) =& \frac{1}{\sqrt{12 t \xi_0}}\sum_{\nu \in \{1, -1\}}\exp\left(-2\nu it\xi_0^3 + \nu i\frac{\pi}{4} - i \nu \int_1^t \frac{|\hat{f}(\nu\xi_0,s)|^2}{s}\;ds\right)\hat{f}_\infty(\nu\xi_0)\\
        &\qquad + O(\epsilon t^{-1/3} (x t^{-1/3})^{-9/28})
    \end{split}\end{equation}
    where $\xi_0 = \sqrt{\frac{-x}{3t}}$ and $f_\infty$ is a bounded function of $\xi$. 
    \paragraph{$|x| \leq t^{1/3 + 4\beta}$} Here, we have self-similar behavior given by
    \begin{equation}\label{eqn:small-x-asymptotics}
        u(x,t) = S(x,t;\alpha) + O(\epsilon t^{-1/3-\beta})
    \end{equation}
    where $\alpha$ is some complex number with $|\alpha| \lesssim \epsilon$,  $\beta = \frac{1}{6} - C\epsilon^2$ for some constant $C$, and $S$ is a self-similar solution; that is, $S(x,t;\alpha) = t^{-1/3} \sigma(x/t^{1/3};\alpha)$, where $\sigma$ is a bounded solution of
    \begin{equation}\label{eqn:self-sim-def-intro}
        \left\{\begin{array}{c}
            \sigma'' - \frac{1}{3}x \sigma = -\frac{1}{3}|\sigma|^2\sigma\\
            \hat{\sigma}(0) = \alpha
        \end{array}\right.
    \end{equation}
\end{theorem}
\begin{rmk}
    Equation~\eqref{eqn:self-sim-def-intro} is nothing more than a complex, phase-rotation invariant version of the Painlev\'{e} II equation,
    \begin{equation}\label{eqn:real-painleve-II}
        \left\{\begin{array}{c}
                \tau'' - x \tau = \tau^3\\
                \hat{\tau}(0) = \alpha \in \bbR
            \end{array}\right.
    \end{equation}
    It is known (see~\cite{deiftAsymptoticsPainleveII1995,hastingsBoundaryValueProblem1980,correiaAsymptoticsFourierSpace2020}) that~\eqref{eqn:real-painleve-II} has a unique bounded solution for $|\alpha| < 1$, and this fact will be used in~\Cref{sec:self-sim} to prove that~\eqref{eqn:self-sim-def-intro} has a unique bounded solution.
\end{rmk}
\begin{rmk}\label{rmk:regularity-assumption-main-thm}
    Note that the assumption that $u_* \in H^2$ is only used to prove local wellposedness for the equation (using the $H^2 \cap x^{-1}L^2$ theory from~\cite{katoCauchyProblemGeneralized1983}).  In particular, it plays no role in the a priori estimates that give us the asymptotics, and we do not need any smallness assumption on the $H^2$ norm of $u_*$.
    
    If we could prove local wellposedness for~\eqref{eqn:cmkdv-t-1} with $u_* \in \mathcal{F}L^\infty \cap x^{-1}L^2$, then we could drop the requirement that $u_* \in H^2$ entirely, since our arguments would then imply that the local solution can be extended to a global one.  However, proving local wellposedness in this space is not straightforward: the quasilinear behavior of the problem appears to preclude the use of a fixed-point argument, and smooth functions are not dense in $\mathcal{F}L^\infty$, which makes compactness arguments more complicated.  It is possible that local existence could be proved by arguing along the lines of~\cite{correiaSelfSimilarDynamicsModified2020}; however, nontrivial modifications outside the scope of this paper would be needed to account for the different algebraic structure of the nonlinearity in the complex case.
\end{rmk}

It might seem somewhat unnatural to prescribe initial data in this form.  However, by combining~\Cref{thm:main-theorem} with the weighted local wellposedness result in~\cite{katoCauchyProblemGeneralized1983} we obtain a result with initial conditions given at $t = 0$:
\begin{corollary}\label{thm:main-thm-u-0}
    Let $u$ solve 
    \begin{equation*}
    \left\{\begin{array}{c}
        \partial_t u + \partial_x^3 u = -|u|^2 \partial_x u\\
        u(t=0) = u_0
    \end{array}\right.
\end{equation*}
    Then, there exists an $\epsilon_0 > 0$ such that for all $\epsilon < \epsilon_0$, if 
    \begin{equation*}
        \lVert x u_0 \rVert_{L^2} + \lVert u_0 \rVert_{H^2} \leq \varepsilon
    \end{equation*}
    then the solution $u$ has the same asymptotics as in~\Cref{thm:main-theorem}.
\end{corollary}
\begin{proof}
    By~\cite[Theorem 8.1]{katoCauchyProblemGeneralized1983}, for $\epsilon$ small enough there exists a local solution in $C([0,1], H^2 \cap |x|^{-1} L^2)$ satisfying $\sup_{0 \leq t \leq 1} \lVert u(t) \rVert_{H^2} + \lVert xu(t) \rVert_{L^2} \lesssim \epsilon$.  Now, let $u_* = e^{\partial_x^3} u(1)$.  Since the linear propagator is unitary on $L^2$ Sobolev spaces, $u_* \in H^2$.  Moreover, by using the identity $x e^{t\partial_x^3} = e^{t\partial_x^3} (x - 3t \partial_x^2)$, we see that
    \begin{equation*}\begin{split}
        \lVert \jBra{x} u_* \rVert_{L^2}  \leq& \lVert u_* \rVert_{L^2} + \lVert x e^{\partial_x^3} u(1) \rVert_{L^2}\\
        \lesssim& \lVert u(1) \rVert_{L^2} + \lVert (x - 3 \partial_x^2) u(1) \rVert_{L^2}
        \lesssim \epsilon
    \end{split}\end{equation*}
    By taking the Fourier transform and recalling the one dimensional Sobolev-Morrey embedding $H^1 \to C^{0,1/2}$, we see that $u_*$ satisfies a bound of the form~\eqref{eqn:main-thm-initial-bdd}, so \Cref{thm:main-theorem} gives the result for $\epsilon$ sufficiently small.
\end{proof}
\begin{rmk}
    As discussed in~\Cref{rmk:regularity-assumption-main-thm}, the role of the $H^2$ hypothesis in~\Cref{thm:main-thm-u-0} is largely to allow us to use the weighted local wellposedness theory of~\cite{katoCauchyProblemGeneralized1983}.  In this case, however, it is much less clear that we could obtain a wellposedness theorem in the scaling critical space $\mathcal{F}L^\infty \cap x^{-1}L^2$ because the dispersive decay estimates degenerate at $t = 0$.  Even in the real-valued case, very little is known about wellposedness on $[0,1]$ with initial data in $\mathcal{F}L^\infty \cap x^{-1}L^2$: see~\cite{correiaSelfSimilarDynamicsModified2020}.
\end{rmk}
The methods we develop do not depend on the precise algebraic properties of the nonlinearity or on the complete integrability of the equation.  This robustness is especially important in applications, since~\eqref{eqn:cmkdv} generally arises as a reduced order model for more complicated equations.  In addition, our method can be applied to similar problems with Painlev\'e asymptotics: we use it to get small data asymptotics for the Ablowitz-Ladik equation in~\cite{stewartAblowitzLadik}.  Also see~\Cref{sec:nonlin-modif} for a sketch of how the argument can be modified to apply to other complex mKdV-type equations, including the Sasa-Satsuma equation.

\subsection{Main difficulties}

The main difficulty for complex mKdV over real-valued mKdV is the unfavorable location of the derivative in the nonlinearity, which creates significant obstacles for the proof.

The first difficulty arises when we try to control $u$ in weighted spaces.  Our argument requires us to control $Lu$ in $L^2$, where 
\begin{equation*}
    L = e^{-t\partial_x^3}x e^{t\partial_x^3} = x - 3t\partial_x^2    
\end{equation*}
Prior works for real mKdV estimate $Lu$ by relating it to the scaling transform of $u$:
\begin{equation*}
	\Lambda u = (1 + x\partial_x + 3t\partial_t)
\end{equation*}
via the identity 
\begin{equation*}
	Lu = \partial_x^{-1} \Lambda u - 3tu^3
\end{equation*}
which holds for solutions of real mKdV.  Since the nonlinearity for real mKdV can be written as a derivative, we can integrate the equation for $\Lambda u$ and perform an energy estimate to get control of $\partial_x^{-1} \Lambda u$.  This strategy fails completely for complex mKdV: the relationship between $Lu$ and $\Lambda u$ now reads
\begin{equation*}
	Lu = \partial_x^{-1} \Lambda u - 3t \partial_x^{-1} \left(|u|^2 \partial_x u\right)
\end{equation*}
and the last term cannot be bounded in $L^2$.  Thus, we must use a different approach.

Our argument, very roughly speaking, amounts to performing an energy estimate on $Lu$ directly.  The fact that we have a derivative in the nonlinearity means that we must exploit some cancellation (via integration by parts) to avoid having to estimate terms containing a $\partial_x Lu$ factor.  This is analogous to the situation for the $H^k$ energy estimates for mKdV: see~\cite[Chapter 4]{taoNonlinearDispersiveEquations2006}.

The other (and more major) problem appears when we try to establish bounds on $\hat{u}$ at low frequencies.  Writing $f = e^{t\partial_x^3} u$ for the linear profile of $u$, we see that
\begin{equation*}
	\partial_t \hat f(\xi, t) = -\frac{i}{2\pi} \int_1^t\int e^{it\phi} \hat{f}(\eta, t) \overline{\hat{f}(-\sigma, t)} (\xi - \eta - \sigma)\hat{f}(\xi - \eta - \sigma, t)\;d\eta d\sigma
\end{equation*}
for $\phi(\xi, \eta, \sigma) = \xi^3 - (\xi-\eta-\sigma)^3 - \eta^3 - \sigma^3 = 3(\eta + \sigma)(\xi - \eta)(\xi - \sigma)$.  When $|\xi| \geq t^{-1/3}$, we can perform a stationary phase estimate on the nonlinear term to find that $\partial_t \hat{f}$ satisfies a perturbed Hamiltonian ODE, which we can integrate to get bounds for $\hat{f}$.  However, the stationary phase estimate degenerates at low frequencies, which prevents us from applying this argument for $|\xi| < t^{-1/3}$.  In the real-valued case, the failure of the stationary phase estimates is compensated by the favorable position of the derivative, allowing us to obtain global bounds using only dispersive estimates.  In the complex-valued case, the derivative structure is less favorable, and the dispersive estimates for $u$ are only strong enough to prove $|\partial_t \hat{f}| \lesssim \epsilon^3 t^{-1}$, which is insufficient to prove global bounds.

To overcome this second difficulty, we perform a modulation argument.  We write $u = S + w$, where $S$ is a self-similar solution to~\eqref{eqn:cmkdv} satisfying $\hat{S}(0) = \hat{u}(0)$ and $w$ is a remainder.  This decomposition is advantageous: the fact that $S$ is self-similar implies that $|S|^2 \partial_x S$ is a derivative, which gives additional cancelation at low frequencies.  Moreover, the Fourier space estimates of Correia, C\^ote, and Vega in~\cite{correiaAsymptoticsFourierSpace2020} can be combined with our estimates on the linear propagator to show that $S$ obeys the same decay estimates as $u$.  In particular, $S$ and $u$ have matched asymptotics for $|x| \leq t^{1/3}$ (which corresponds to $|\xi| \leq t^{-1/3}$ in frequency space), so low-frequency projections of $w$ obey stronger decay bounds than either $u$ or $S$.  By writing
\begin{equation*}\begin{split}
	\partial_t \hat{f}(0, t) =& -\frac{1}{\sqrt{2\pi}} \int |u|^2 \partial_x u \;dx\\ 
	=& -\frac{1}{\sqrt{2\pi}} \int |u|^2 \partial_x u - |S|^2 \partial_x S\;dx\\
	=&  -\frac{1}{\sqrt{2\pi}}\int |u|^2 \partial_x w + 2\Re(u \overline{w}) \partial_x S\;dx
\end{split}\end{equation*}
and using this improved decay, we can prove that $\partial_t \hat{f}(0,t)$ decays at an integrable rate, which allows us to show that $\hat{f}(\xi,t)$ is bounded for $|\xi| \lesssim t^{-1/3}$.  As a bonus, we immediately get the asymptotics $u \approx S$ for $|x| \lesssim t^{1/3}$.  

This argument has some similarities with the one used by Hayashi and Naumkin in~\cite{hayashiModifiedKortewegVries2001}: indeed, the estimates we find for $w$ are largely identical to theirs.  However, our argument differs from theirs in three key respects.  First, we cannot estimate $Lw$ using the scaling vector field $\Lambda$, so we instead must use the method of space-time resonances to perform an energy estimate on $Lw$ directly.  Second, since the mean $\hat{u}(0, t)$ varies in time, we must modulate in time rather than subtracting a fixed self-similar solution.  Finally, our argument is different in terms of how the estimates on $w$ fit into the proof.  In~\cite{hayashiModifiedKortewegVries2001}, the estimates on $w$ are performed after the solution has been shown to decay at the linear rate for all time, and are only necessary to obtain the asymptotics in the self-similar region.  In our work, on the other hand, the estimates on $w$ are necessary in order to prove that the solution $u$ decays at the linear rate globally in time.

\subsection{Plan of the proof}

\subsubsection{Overview of the space-time resonance method}
To prove~\Cref{thm:main-theorem}, we will work within the framework of the method of space-time resonances.  This method, first developed by Germain, Shatah, and Masmoudi in~\cite{germainGlobalSolutions3D2008} and independently by Gustafson, Nakanishi, and Tsai in~\cite{gustafsonScatteringTheoryGross2009}, has been used to derive improved time of existence and asymptotics for a variety of equations with dispersive character; see~\cite{germainGlobalExistenceCoupled2011, germainGlobalExistenceEulerMaxwell2014,ionescuGlobalRegularity2d2018,ionescuEinsteinKleinGordonCoupledSystem2020,cordobaGlobalSolutionsGeneralized2019,katoNewProofLongrange2011,haniScatteringZakharovSystem2013,germainGlobalSolutions3D2008,germainGlobalSolution2D2012}.  The method begins by rewriting the nonlinear equation for $u$ in terms of the profile $f(t) = e^{t\partial_x^3} u(t)$:
\begin{equation}\label{eqn:cmkdv-profile-eqn}
    \left\{\begin{array}{l}
        \partial_t f =  -e^{t\partial_x^3}\left(\left|e^{-t\partial_x^3} f \right|^2 \partial_x e^{-t\partial_x^3} f\right)\\
        f(t=1) = u_*
    \end{array}\right.
\end{equation}
This can be re-written in mild form as
\begin{equation}\label{eqn:cmkdv-profile-eqn-mild}
    \hat f(\xi,t) = \hat{u}_*(\xi) - \frac{i}{2\pi} \int_1^t\iint e^{is\phi} \hat{f}(\eta,t) \overline{\hat{f}(-\sigma,t)} (\xi - \eta - \sigma)\hat{f}(\xi - \eta - \sigma,t)\,d\eta d\sigma ds
\end{equation}
where $\phi$ is the phase associated with the four wave mixing by the cubic nonlinearity:
\begin{equation}\label{eqn:4-wave-phase}
    \phi(\xi, \eta, \sigma) = \xi^3 - (\xi-\eta-\sigma)^3 - \eta^3 - \sigma^3 = 3(\eta + \sigma)(\xi - \eta)(\xi - \sigma)
\end{equation}
Roughly speaking, we would like to show that the change in $f$ (given by the integral in~\eqref{eqn:cmkdv-profile-eqn-mild}) is small in some norm that gives us the required decay estimates for $u$.  Heuristically, if we imagine that $\hat{f}$ is a smooth bump function, then the integral term in~\eqref{eqn:cmkdv-profile-eqn-mild} will be dominated by the stationary points of the phase where $\nabla_{s, \eta,\sigma} (s \phi(\xi,\eta,\sigma)) = 0$.  The points where $\phi = 0$ corresponds to a resonance (in the classical sense of the term) in the nonlinear interaction between plane waves of frequencies $\xi-\eta-\sigma, \eta$ and $-\sigma$.  

For dispersive PDEs, it is more natural to think in terms of \emph{wave packets} instead of plane waves.  A wave packet at frequency $\xi$ is a bump function that travels at the group velocity, which for complex mKdV is $v_\xi = -3\xi^2$.  Clearly, wave packets can interact over large timescales only if they have the same group velocity, and the condition $\nablaes \phi = 0$ is precisely what is required for three wave packets at frequencies $\xi-\eta-\sigma$, $\eta$, and $-\sigma$ to have the same group velocities.  See~\cite{germainSpacetimeResonances2010} for an expository overview of the method.

In our application, we will see in~\Cref{sec:linear-ests} the decay we want in~\Cref{thm:main-theorem} follows from the estimate $\sup_{t \geq 1} \lVert f \rVert_X \lesssim \epsilon$, where the $X$ norm is defined by
\begin{equation}\label{eqn:X-def}
    \lVert f \rVert_{X} =  \lVert \hat f(t) \rVert_{L^\infty} + t^{-1/6}\lVert  xf(t) \rVert_{L^2}
\end{equation}
Note that this norm is scale invariant.  We will use a bootstrap argument to show that this norm is small for all time (see~\Cref{sec:reduction} for details).

\subsubsection{Step 1: Stationary phase estimate for high frequencies}\label{sec:intro-step1}
Let us first consider the $L^\infty$ bound for $\hat{f}(\xi,t)$.  The analysis here is similar to the one conducted in~\cite[Section 1.3]{germainAsymptoticStabilitySolitons2016}.  Since we are interested in obtaining a (uniform) pointwise bound, it is natural to consider $\xi$ fixed.  The stationary points $\nablaes \phi = 0$ are then given by
\begin{align*}
    (\eta_1,\sigma_1) =& (\xi,\xi)\\
    (\eta_2,\sigma_2) =& (\xi,-\xi)\\
    (\eta_3,\sigma_3) =& (-\xi,\xi)\\
    (\eta_4,\sigma_4) =& (\xi/3,\xi/3)
\end{align*}
A formal stationary phase calculation then shows that
\begin{equation*}
    \partial_t \hat f(\xi, t) = -\frac{i\sgn \xi}{6t} | \hat{f}(\xi, t)|^2 \hat{f}(\xi, t) + ce^{it\frac{8}{9}\xi^3}\frac{\sgn \xi}{t} | \hat{f}(\xi/3, t)|^2 \hat{f}(-\xi/3, t) + \{\text{error}\}
\end{equation*}
where $c$ is some constant whose exact value is unimportant.  Since the second term has a highly oscillatory phase, we expect that it will not be relevant on timescales $t \geq |\xi|^{-3}$.  Similarly, we expect that the error term will be higher order in $t$, and hence will not contribute significantly to the asymptotics.  After discarding the oscillatory term and the error term, we are left with a Hamiltonian ODE, which we can integrate explicitly to find that
\begin{equation*}
    \hat f(\xi, t) \approx \exp\left(-\frac{i}{6}\sgn \xi \int_1^t \frac{|\hat{f}(\xi,s)|^2}{s}\;ds\right) f_\infty(\xi)
\end{equation*}
for some bounded function $f_\infty$ (cf.~\cite{katoNewProofLongrange2011}).  We will formalize this argument for $|\xi| \geq t^{-1/3}$ in~\Cref{sec:st-phase-setup,sec:st-phase-sec}.

\subsubsection{Step 2: Modulation analysis in the self-similar region}\label{sec:intro-step2}
The above argument only applies for frequencies $|\xi| \geq t^{-1/3}$.  For smaller frequencies, there is not enough oscillation to neglect the oscillating term, and the error term in the stationary phase expansion becomes unacceptably large due to the coalescence of the stationary points $(\eta_i, \sigma_i)$ as $\xi \to 0$.  Using the embedding $\dot{H}^1 \to C^{0,1/2}$ and making the bootstrap assumption that $\lVert \partial_\xi \hat{f} \rVert_{L^2} = \lVert xf \rVert_{L^2} \lesssim \epsilon t^{1/6}$, we see that the problem of controlling low frequencies reduces to understanding the behavior of the zero Fourier mode.  In the real-valued case, $\hat{f}(0,t)$ is conserved by the flow (and hence the low frequency bounds are immediate), but in the complex-valued case,
\begin{equation*}
    \partial_t \hat{f}(0, t) = \partial_t \hat{u}(0, t) = -\frac{1}{\sqrt{2\pi}}\int |u|^2 \partial_x u\;dx
\end{equation*}
which is not zero in general.

The main difficulty for $|\xi| \leq t^{-1/3}$ is that the low-frequency component of $u$ evolves in a genuinely nonlinear manner.  By analogy with the real-valued problem, we expect $u$ to exhibit self-similar asymptotics for $|x| \leq t^{1/3}$, which corresponds to the low frequency range $|\xi| \leq t^{-1/3}$.  Thus, we will attempt to show that the behavior of $u$ at low frequencies is approximately self-similar.  If $S(x,t) = t^{-1/3} \sigma(x t^{-1/3})$ is a self-similar solution of~\eqref{eqn:cmkdv}, then $\sigma$ satisfies the third-order ODE
\begin{equation}\label{eqn:sigma-third-order-ODE}
    \partial_x^3 \sigma - \frac{1}{3}\partial_x(x\sigma) = -|\sigma|^2 \sigma_x
\end{equation}
In order for $S$ to be compatible with the asymptotics given in~\Cref{thm:main-theorem}, we need $\sigma$ to be bounded and lie in a certain weighted $L^2$ space.  In particular, this means $\sigma$ must solve the Painlev\'e-type equation by~\eqref{eqn:self-sim-def-intro}.  Since the mean of $u$ changes in time, we will also need to modulate the mean of the self-similar solution.  This leads us to impose the condition $\hat{\sigma}(0) = p$, which by~\cite{correiaAsymptoticsFourierSpace2020} is enough to determine $\sigma$.  By examining~\eqref{eqn:sigma-third-order-ODE} and recalling that $S$ is obtained by applying the self-similar scaling to $\sigma$, we see that $|S|^2\partial_x S$ is a derivative and thus has mean zero.  If we choose $S$ such that $\hat{S}(0,t) = \hat{\sigma}(0) = \hat{f}(0,t)$, then this implies that
\begin{equation*}
    \partial_t \hat{f}(0, t) = -\frac{1}{\sqrt{2\pi}} \int |u|^2 \partial_x u - |S|^2 \partial_x S\;dx =  -\frac{1}{\sqrt{2\pi}} \int |u|^2 \partial_x w + (w\overline{w} + w \overline{u}) \partial_x S\;dx
\end{equation*}
where $w = u - S$.

\subsubsection{Step 3: Weighted bounds for \texorpdfstring{$w$}{w}}\label{sec:intro-step3}
We now consider the difference $w$ in more detail.  By definition, $w$ has mean zero and so does $g = e^{t\partial_x^3} w$.  We will show that $\lVert xg \rVert_{L^2} \lesssim \epsilon t^{1/6 - \beta}$ for $\beta$ as in~\Cref{thm:main-theorem}.  (This is similar to \cite{hayashiModifiedKortewegVries2001}, where $Lw$ is shown to obey better $L^2$ bounds than $Lu$).  Using this estimate, we find that $w$ has better dispersive decay than $u$, which allows us to prove in~\Cref{sec:low-freq-bdds} that
\begin{equation*}
    |\partial_t \hat{f}(0, t)| \lesssim \epsilon^3 t^{-1-\beta}
\end{equation*}
Integrating in time gives the boundedness for low frequencies, and shows that $u(x,t) \approx S(x,t;\alpha)$ for $|x| \lesssim t^{1/3}$ and $t$ large, where $\alpha = \lim_{t\to\infty} \hat{f}(0,t)$.  Moreover, using the self-similar scaling and~\eqref{eqn:self-sim-def-intro}, it can be seen that $S$ satisfies $\lVert LS \rVert_{L^2} \sim \epsilon^3 t^{1/6}$.  Thus,
\begin{equation*}
    \lVert xf \rVert_{L^2} \leq \lVert LS \rVert_{L^2} + \lVert xg \rVert_{L^2}
\end{equation*}
so the bound $\lVert xf \rVert_{L^2} \lesssim \epsilon t^{1/6}$ also follows from the improved $L^2$ bound for $xg$. Thus, the proof is complete if we can obtain the weighted $L^2$ bound for $g$.  This is the most technical and involved part of the argument, and is tackled in~\Cref{sec:weighted-L2}.  We will sketch the argument below.

In the real-valued case, it is possible to use the scaling vector field to control $Lw$ in $L^2$, but the non-divergence form of the nonlinearity in~\eqref{eqn:cmkdv} precludes this argument.  Instead, we will use a more direct approach.  Since $\mathcal{F}(xg) = -i \partial_\xi \hat{g}$, by Plancherel's theorem it suffices to prove bounds on $\partial_\xi \hat{g}$ in $L^2$.  We find that
\begin{equation}\label{eqn:cmkdv-localization-g-eqn}\begin{split}
    \partial_t \partial_\xi \hat g =&  \frac{s}{2\pi} \int \partial_\xi \phi e^{is\phi} \hat{f}(\eta) (\xi-\eta-\sigma)\hat{g}(\xi-\eta-\sigma) \overline{\hat{f}(-\sigma)}\;d\eta d\sigma  \\&\qquad - \frac{i}{2\pi} \int e^{is\phi} \hat{f}(\eta) (\xi - \eta - \sigma) \partial_\xi \hat{g}(\xi-\eta-\sigma) \overline{\hat{f}(-\sigma)}\;d\eta d\sigma\\
    &+\{\text{easier terms}\}
\end{split}\end{equation}
The second term appears concerning, since our estimates do not allow us to control derivatives of $Lw$.  However, by writing it as $e^{-t\partial_x^3} \left(|u|^2 \partial_x Lw\right)$ in physical space and performing an energy estimate, we see that this term is actually harmless:
\begin{equation*}
    \frac{1}{2}\partial_t \lVert xg \rVert_{L^2}^2 = -\int |u|^2 \partial_x \left| Lw \right|^2\;dx + \cdots =  \int \partial_x \left| u \right|^2 \left| Lw \right|^2\;dx + \cdots
\end{equation*}
and $\int \partial_x |u|^2 |Lw|^2\;dx \lesssim \epsilon^2 t^{-1} \lVert xg \rVert_{L^2}^2$, which is consistent with the slow growth of $xg$ in $L^2$.  Thus, it only remains to control the first term in~\eqref{eqn:cmkdv-localization-g-eqn}.  We do this by considering the space-time resonance structure of $\phi$, together with cancellations coming from the $\partial_\xi \phi$ multiplier.  The derivatives of $\phi$ are
\begin{equation}\label{eqn:phi-derivatives}\begin{split}
    \partial_\xi \phi =& 3(\eta + \sigma)(2\xi - \eta - \sigma)\\
    \partial_\eta \phi =& 3(\xi - \sigma)(\xi - 2\eta - \sigma)\\
    \partial_\sigma \phi =& 3(\xi - \eta) (\xi - \eta -2\sigma)\
\end{split}\end{equation}
Based on~\cref{eqn:4-wave-phase,eqn:phi-derivatives}, we introduce the space-time resonant sets
\begin{align*}
    \mathcal{T} =& \{\xi = \eta\} \cup \{\xi = \sigma\}\\
    \mathcal{S} =& \{\eta = \sigma = \xi/3\}\\
    \mathcal{R} =& \mathcal{S} \cap \mathcal{T} \\
    =& \{(0,0,0)\}
\end{align*}
where $\mathcal{T}$ is the set of time resonances (where $\phi$ vanishes to higher order than $\partial_\xi \phi$), $\mathcal{S}$ is the set of space resonances (where $\nablaes \phi$ vanishes to higher order than $\partial_\xi \phi$), and the set of space-time resonances is given by $\mathcal{R} = \mathcal{T} \cap \mathcal{S}$.  

Away from the set $\mathcal{T}$ of time resonances, the quotient $\frac{\partial_\xi \phi}{\phi}$ is bounded, and we may integrate by parts in the time variable.  This is similar to the normal form transformation method introduced by Shatah in~\cite{shatahNormalFormsQuadratic1985}. In particular, it transforms the cubic nonlinearity into a quintic nonlinearity, which gives us more decay and leads to better bounds.  We will pursue this idea in~\Cref{sec:xf-tr}.

On the other hand, outside the set $\mathcal{S}$ of space resonances, $\frac{\partial_\xi \phi}{|\nablaes \phi|}$ is bounded, and we can integrate by parts using the relation $\frac{\nablaes\phi}{is|\nablaes\phi|^2} \cdot \nablaes e^{is\phi} = e^{is\phi}$ to gain a power of $s^{-1}$.  This is similar in spirit to the vector field method developed by Klainerman in~\cite{klainermanNullConditionGlobal1986}.  In principle, this integration could result in a loss of derivatives when the space weight and the derivative fall on the same term.  In practice, however, we only need to apply this integration by parts in a small neighborhood of $\mathcal{T}$, where it can be seen that $|\xi - \eta - \sigma| \lesssim \max (|\eta|, |\sigma|)$, where we can move the derivative from the term with an $L$ weight to an unweighted term.  The details of this argument are given in~\Cref{sec:xf-sr}

Finally, in a small (time-dependent) neighborhood of the space-time resonant set $\mathcal{R}$, we can integrate crudely using the volume bounds in Fourier space and the H\"older bound $|\hat{g}(\xi)| \lesssim \epsilon t^{1/6-\beta}|\xi|^{1/2}$ together with the $L^\infty$ bound for $\hat{f}$ to bound the contribution from $\mathcal{R}$ in~\Cref{sec:xf-str}.

\subsection{On the structure of the nonlinearity}\label{sec:nonlin-modif} Although the arguments in this paper are given for the focusing nonlinearity $-|u|^2 \partial_x u$, with slight modifications they can be applied to any nonlinearity of the form $a |u|^2 \partial_x u + b u^2 \partial_x \overline{u}$ for $a$ and $b$ real.  Taking $a = 1$ gives the defocusing equation.  For $a = 3b$, we have the Sasa-Satsuma equation, whose asymptotics were studied in~\cite{liu_long-time_2019,huang_asymptotics_2020}.  We now sketch the changes necessary to handle this more general class of nonlinearities.

With the new nonlinearity,~\eqref{eqn:cmkdv-profile-eqn-mild} must be modified:
\begin{equation}\begin{split}
    \hat f(\xi,t) &= \hat{u}_*(\xi) + a\frac{i}{2\pi} \int_1^t\iint e^{is\phi} \hat{f}(\eta,t) \overline{\hat{f}(-\sigma,t)} (\xi - \eta - \sigma)\hat{f}(\xi - \eta - \sigma,t)\,d\eta d\sigma ds\\
    &+ b\frac{i}{2\pi} \int_1^t\iint e^{is\phi} \hat{f}(\eta,t) \hat{f}(\sigma,t) (\xi - \eta - \sigma)\overline{\hat{f}(-\xi + \eta + \sigma,t)}\,d\eta d\sigma ds
\end{split}\end{equation}
Notice that the second integral still has the same phase $\phi$ given in~\eqref{eqn:4-wave-phase}, so the stationary points given in~\Cref{sec:intro-step1} remain unchanged.  The stationary phase expansion of $\partial_t \hat{f}(\xi,t)$ now contains terms involving $|\hat{f}(-\xi,t)|^2$:
\begin{equation*}\begin{split}
    \partial_t \hat f(\xi, t) =& \frac{ia\sgn \xi}{6t} | \hat{f}(\xi, t)|^2 \hat{f}(\xi, t) + cae^{it\frac{8}{9}\xi^3}\frac{\sgn \xi}{t} | \hat{f}(\xi/3, t)|^2 \hat{f}(-\xi/3, t)\\
    &- \frac{ib \sgn \xi}{6t} |\hat{f}(\xi, t)|^2\hat{f}(\xi, t) + \frac{2ib \sgn \xi}{6t} |\hat{f}(-\xi,t)|^2 \hat{f}(\xi, t) \\
    &+ cae^{it\frac{8}{9}\xi^3}\frac{\sgn \xi}{t} \hat{f}(\xi/3, t)^2 \overline{\hat{f}(-\xi/3, t)} + \{\text{error}\}
\end{split}\end{equation*}
If we drop the oscillating terms and error, we obtain a coupled ODE system for $\hat{f}(\xi)$ and $\hat{f}(-\xi)$:
\begin{equation*}\begin{split}
    \partial_t \hat{f}(\xi,t) =& \frac{i}{6t} ((a-b) |\hat{f}(\xi,t)|^2 + 2b |\hat{f}(-\xi,t)|^2) \hat{f}(\xi,t)\\
    \partial_t \hat{f}(-\xi,t) =& \frac{i}{6t} (2b |\hat{f}(\xi,t)|^2 + (a-b) |\hat{f}(-\xi,t)|^2) \hat{f}(-\xi,t)
\end{split}\end{equation*}
In the above system, $|\hat{f}(\pm\xi)|^2$ are conserved, so we can explicitly integrate to get the logarithmic phase correction (which now depends on $\hat{f(-\xi)}$ as well as $\hat{f}(\xi)$).  The self-similar profile $\sigma$ discussed in~\Cref{sec:intro-step2} now satisfies
\begin{equation*}
    \partial_x^3\sigma - \frac{1}{3}\partial_x(x \sigma) = a|\sigma|^2 \partial_x\sigma + b \sigma^2 \partial_x\overline{\sigma}
\end{equation*}
As before, we can conclude that for self-similar solution $S$, the expression  $a|S|^2 \partial_x S + b S^2 \partial_x \overline{S}$ is a total derivative, which allows us to get the improved decay for $\partial_t \hat{f}(0,t)$ using the better decay of $w$.  Since the equation satisfied by $\sigma$ is still gauge invariant, we can use the arguments in~\Cref{sec:self-sim} reduce the problem to studying the gauge-invariant Painlev\'e II equation
\begin{equation*}
    \left\{\begin{array}{c}
        \sigma''(y;p) = \frac{1}{3}y \sigma(y;p) + \frac{1}{3}(a+b) |\sigma(y;p)|^2 \sigma(y;p)\\
        \hat{\sigma}(0;p) = p\\
        \hat{\sigma}(\eta;p) \text{ is continuous at } \eta=0
    \end{array}\right.
\end{equation*}
Finally, we note that the key energy estimate from~\Cref{sec:intro-step3} still holds for the new form of the nonlinearity:
\begin{equation*}\begin{split}
    \frac{1}{2}\partial_t\lVert xg \rVert_{L^2}^2 =& a \int |u|^2 \partial_x |Lw|^2\;dx + b \Re \int u^2 \partial_x (\overline{Lw})^2\;dx + \cdots\\
    =& -a \int \partial_x|u|^2  |Lw|^2\;dx - b \Re \int \partial_x u^2  (\overline{Lw})^2\;dx + \cdots\\
\end{split}\end{equation*}
The remaining space-time resonant estimates are insensitive to the exact conjugate structure of the nonlinearity, so the rest of the argument goes through with no changes.

\subsection{Organization of the paper}  The plan of the rest of the paper is as follows: In~\Cref{sec:notation}, we present some notation that will be used throughout the paper and give some conventions and results on pseudoproduct operators.  In~\Cref{sec:linear-ests}, we give decay estimates for the linear equation, and show how these estimates allow us to control bi- and trilinear terms.  In~\Cref{sec:self-sim}, we will consider the self-similar solution $S$, and derive estimates which will be necessary for the later analysis.   In~\Cref{sec:reduction}, we will show that~\Cref{thm:main-theorem} follows if we prove that $\hat{f}$ evolves by logarithmic phase rotation, and that $\lVert xg \rVert_{L^2}$, $\lVert \hat{f} \rVert_{L^\infty}$, and $|\partial_t \hat{f}(0,t)|$ satisfy certain bootstrap estimates.  In~\Cref{sec:weighted-L2}, we prove that under the bootstrap assumptions, $\lVert xg \rVert_{L^2} \lesssim \epsilon t^{1/6-\beta}$ using space-time resonances and a Gr\"onwall argument.  We then verify that $|\partial_t \hat{u}|$ has the required decay (and hence that $\hat{f}$ is bounded for low frequencies) and show that at high frequencies $\hat{f}$ is bounded and undergoes the required logarithmic phase rotation in~\Cref{sec:L-infty-est}.

\section{Preliminaries}\label{sec:notation}
\subsection{Notation and basic inequalities}
We will make use of the Japanese bracket notation
\begin{equation*}
    \jBra{x} := \sqrt{1 + x^2}
\end{equation*}

When discussing constants, we will write $C$ to denote a (positive) absolute constant, the exact value of which can change from line to line.  We also write $c_j$ and $c_{j,k}$ to denote sequences which are $\ell^2$ summable to some arbitrary absolute constant:
\begin{equation*}\begin{split}
    \left(\sum_j c_j^2\right)^{1/2} = C\\
    \left(\sum_{j,k} c_{j,k}^2\right)^{1/2} = C
\end{split}\end{equation*}
If $X$ and $Y$ are two quantities which we wish to compare, but we want to suppress constant factors, we will write
\begin{itemize}
    \item $X \lesssim Y$ if $X \leq CY$ for some $C > 0$,
    \item $X \sim Y$ if $X \lesssim Y$ and $Y \lesssim X$,
    \item $X \ll Y$ if $X \leq c Y$, where $c$ is a small constant, the exact value of which depends on the context.
\end{itemize}
If we want to allow the implicit constant to depend on some parameters $P_1, P_2, \cdots P_n$, then we will write $X \lesssim_{P_1, P_2, \cdots P_n} Y$, $X \sim_{P_1, P_2, \cdots P_n} Y,$ or $X \ll_{P_1, P_2, \cdots P_n} Y$, respectively.

We use the Fourier transform convention
\begin{equation*}
    \mathcal{F}f(\xi) = \hat{f}(\xi) := \frac{1}{\sqrt{2\pi}}\int f(x) e^{-ix\xi}\,dx
\end{equation*}
with the inverse transformation
\begin{equation*}
    \mathcal{F}^{-1}(\xi) = \check f(x) := \frac{1}{\sqrt{2\pi}}\int f(x) e^{ix\xi}\,dx
\end{equation*}
Under this convention, multiplication and convolution are linked by
\begin{equation*}\begin{split}
    \mathcal{F}(fg)(\xi) =& \frac{1}{\sqrt{2\pi}} \hat{f} * \hat{g}(x)\\
    \mathcal{F}(f * g)(\xi) =& \sqrt{2\pi} \hat{f}(\xi) \hat{g}(\xi)
\end{split}\end{equation*}
where
\begin{equation*}
    f*g(x) = \int f(x-y) g(y)\,dy = \int f(y) g(x-y)\,dy
\end{equation*}

Using the Fourier transform, we can generalize the notion of differential operators to define Fourier multiplication operators.  A Fourier multiplication operator with symbol $m : \bbR \to \bbC$ is given by
\begin{equation*}
    m(D) f(x) := \mathcal{F}^{-1}(m(\xi) \hat{f}(\xi))(x)
\end{equation*}
The Littlewood-Paley projection operators are an especially important family of Fourier multiplication operators.  Let ${\psi \in C^\infty(\bbR)}$ be a function supported on $B_2(0)$ which is identically zero on $B_{1/2}(0)$ satisfying
\begin{equation*}
    \sum_{j \in \bbZ} \psi(2^j \xi) = 1 
\end{equation*}
for all $\xi \neq 0$.  Then, we define the Littlewood-Paley projectors as
\begin{equation*}
    P_j = \psi_j(D) = \psi\left(\frac{D}{2^j}\right)
\end{equation*}
and define $P_j^+$ and $P_j^-$ to be the projectors to positive and negative frequencies, respectively:
\begin{equation*}
    P_j^+ = \psi_j(D) \bbOne_{D > 0},\qquad\qquad P_j^- = \psi_j(D) \bbOne_{D < 0},
\end{equation*}
We write
\begin{equation*}
    P_{\leq j} = \sum_{k \leq j} P_k,\qquad P_{\geq j} = \sum_{k \geq j} P_k, \qquad P_{[j_1, j_2]} = \sum_{j_1 \leq k \leq j_2} P_k
\end{equation*}
with $P_{<j}$ and $P_{> j}$ being defined similarly.  We also define
\begin{equation*}
    P_{\lesssim j} = \sum_{k \leq j + 10} P_k\qquad P_{\ll j} = \sum_{k < j + 10} P_k \qquad P_{\sim j} = \sum_{j - 10 \leq k \leq j + 10} P_k
\end{equation*}
All the Littlewood-Paley projectors are bounded from $L^p(\bbR) \to L^p(\bbR)$, and moreover we have the Plancherel-type identity
\begin{equation*}
    \sum_{j \in \bbZ}\lVert P_j f \rVert_{L^2}^2 \sim \lVert f \rVert_{L^2}^2
\end{equation*}
Furthermore, if $f$ has mean $0$ (i.e., $\hat{f}(0) = 0$), then taking Fourier transforms and applying Hardy's inequality (see~\cite{hardyInequalities1959}), we find that
\begin{equation}\label{eqn:hardy-est}
    \lVert f \rVert_{\dot{H}^{-1}}^2 := \sum_{2^j} 2^{-2j} \lVert P_j f \rVert_{L^2}^2  \lesssim \lVert xf \rVert_{L^2}^2
\end{equation}

When discussing the complex mKdV equation, we will find it convenient to use the (time-dependent) frequency projectors given by
\begin{equation*}
    Q_j = \begin{cases}
        P_j & 2^j > t^{-1/3}\\
        P_{\leq j} & 2^{j-1} < t^{-1/3} \leq 2^{j}\\
        0 & \text{else}
    \end{cases}
\end{equation*}
Clearly, these projectors obey the same $L^p \to L^p$ bounds as the usual Littlewood-Paley projectors uniformly in time.  The decision not to distinguish between frequencies $\lesssim t^{-1/3}$ is motivated by our desire to have the frequency localization of $\hat{f}$ determine the spatial localization of $u = e^{-t\partial_x^3} f$ through the group velocity relation. For frequencies $\gtrsim t^{-1/3}$, this is possible (see~\Cref{sec:linear-ests}), but the uncertainty principle implies that this spatial localization deteriorates when we project to frequencies $\ll t^{-1/3}$.  To denote the projector to the low frequencies, we will sometimes write
\begin{equation*}
    Q_{\leq \log t^{-1/3}} = \psi_{\leq \log t^{-1/3}}(D) := P_{\leq j}
\end{equation*}
 where $j \in \bbZ$ is such that $2^{j-1} < t^{-1/3} \leq 2^{j}$, so
 \begin{equation*}
     \operatorname{Id} = Q_{\leq \log t^{-1/3}} + \sum_{2^j > t^{-1/3}} Q_j
 \end{equation*}
 As with the projectors $P_j$, we define
\begin{equation*}
    Q_{\leq j} = \sum_{k \leq j} Q_k,\qquad Q_{\geq j} = \sum_{k \geq j} Q_k, \qquad Q_{[j_1, j_2]} = \sum_{j_1 \leq k \leq j_2} Q_k
\end{equation*}
with $Q_{< j}$ and $Q_{> j}$ being defined analogously, and
 \begin{equation*}
     Q_{\lesssim j} = Q_{\leq j + 10},\qquad Q_{\sim j} =  Q_{[j - 10, j + 10]}, \qquad Q_{\ll j} = \chi_{< j - 10}
 \end{equation*}
 We will often indicate the frequency localization of a function through a subscript, so
 \begin{equation*}\begin{split}
     f_j =& Q_j f\\
     f_{[j_1,j_2]} =& Q_{[j_1,j_2]} f
 \end{split}\end{equation*}
 and similarly for $f_{< j}$, $f_{\sim j}$, etc.

To complement the $Q_j$'s, it will be useful to consider time-dependent functions $\chi_j$ with the property that if $f$ is a bump function localized in space near $0$, then $e^{-t\partial_x^3} Q_j f$ will be localized near the support of $\chi_j$ (up to more rapidly decaying tails).  To do this, we define
\begin{equation*}
    \chi_j(x;t) = \begin{cases}
        \chi(x/(t2^{2j})) & 2^j > t^{-1/3}\\
        \sum_{2^k \leq t^{-1/3}} \chi(x/(t2^{2k})) & 2^{j} \leq t^{-1/3} < 2^{j+1}\\
        0 & 2^{j+1} \leq t^{-1/3}
    \end{cases}
\end{equation*}
where $\chi$ is a non-negative bump function localized in the region $|x| \approx 1$ chosen so that $\sum_{j} \chi_j(x,t) = 1$ for all $x \neq 0$.  As with the Fourier projectors, we define
\begin{equation*}
     \chi_{\leq j} = \sum_{k \leq j} \chi_{k}, \qquad \chi_{< j} = \sum_{k < j} \chi_k, \qquad \chi_{[j_1, j_2]} = \sum_{j_1 \leq k \leq j_2} \chi_{k}
\end{equation*}
with $\chi_{> j}$ and $\chi_{\geq j}$ defined in analogously, and
\begin{equation*}
    \chi_{\lesssim j} = \chi_{\leq j + 10},\qquad \chi_{\ll j} = \chi_{< j - 10},\qquad \chi_{\sim j} = \chi_{[j - 10, j + 10]}
\end{equation*}
and similarly for $\chi_{\gtrsim j}$ and $\chi_{\gg j}$.

Note that for $f \in L^2$, each of the families $\{\chi_k f\}$, $\{Q_j f \}$ and $\{\chi_k Q_j f \}$ is almost orthogonal, which implies that
\begin{equation}\label{eqn:phy-and-fourier-almost-orthogonality}
    \lVert f \rVert_{L^2}^2 \sim \sum_{2^k \gtrsim t^{-1/3}} \lVert \chi_k f \rVert_{L^2}^2 \sim  \sum_{2^j \gtrsim t^{-1/3}} \lVert Q_j f \rVert_{L^2}^2 \sim \sum_{2^j, 2^k \gtrsim t^{-1/3}} \lVert \chi_k Q_j f \rVert_{L^2}^2
\end{equation}

We also recall the following bound (which expresses the pseudolocality of the projectors $Q_j$): For $\frac{1}{p} = \frac{1}{p_1} + \frac{1}{p_2}$,
\begin{equation}\label{eqn:P_j-loc-bound}
    \lVert\left( Q_{\leq j} f\right) g\rVert_{L^p} \lesssim_N \jBra{2^j d(\supp(f), \supp(g))}^{-N} \lVert f \rVert_{L^{p_1}}\lVert g \rVert_{L^{p_2}}
\end{equation}
which can be obtained by writing $Q_{\leq j} f = \check{Q}_{\leq j} * f$ and noting that $\check{Q}_{\leq j}$ is a rapidly decreasing function.  In particular, if the supports of $f$ and $g$ are separated by a distance much larger than $2^{-j}$, the term on the right in~\eqref{eqn:P_j-loc-bound} will be small.
\begin{rmk}
    The notion of pseudolocality holds in a much greater generality for pseudodifferential operators $a(x, hD)$, see~\cite{zworskiSemiclassicalAnalysis2012}.  In equation~\eqref{eqn:P_j-loc-bound}, $2^{-j}$ plays the role of the small parameter $h$.
\end{rmk}
\begin{rmk}
    We will often apply the estimate~\eqref{eqn:P_j-loc-bound} as follows: Taking $g = \chi_j$, we find that
    \begin{equation}\label{eqn:pseudoloc-comm-est}
        \left\lVert \chi_j Q_{\leq j} \left((1 - \chi_{\sim j}) F\right)\right\rVert_{L^p} \lesssim_N (t 2^{3j})^{-N} \lVert F \rVert_{L^p}
    \end{equation}
    so
    \begin{equation*}
        \lVert \chi_j Q_{\leq j} F \rVert_{L^p} \lesssim_N \lVert \chi_{\sim j} F \rVert_{L^p} + (t2^{3j})^{-N} \lVert F \rVert_{L^p}
    \end{equation*}
    In our applications, $\chi_j F$ and $\chi_{\sim j} F$ will generally obey the same sorts of bounds, so we can essentially commute the frequency localization of $Q_{\leq j}$ and the spatial localization of $\chi_j$ up to an error which is summable in $j$.
\end{rmk}
\begin{rmk}
    By writing $Q_j = Q_{\leq j} - Q_{\leq j-1}$, we see that the same bounds continue to hold if $Q_{\leq j}$ is replaced by $Q_{j}$
\end{rmk}

\subsection{Multilinear harmonic analysis}\label{sec:multilinear-defs}
For a symbol $m : \bbR^3 \to \bbC$, we define the trilinear pseudoproduct operator $T_m$ by
\begin{equation*}
    \mathcal{F}T_m(f,g,h)(\xi) = \frac{1}{2\pi} \int m(\xi,\eta,\sigma) \hat{f}(\eta) \hat{g}(\xi - \eta - \sigma) \hat{h}(\sigma)\,d\eta d\sigma
\end{equation*}
In particular, $T_1(f,g,h)(x) = (fgh)(x)$, so $T_m$ can be thought of as a generalized product.  If the symbol $m$ is sufficiently well-behaved, we can show that the pseudoproduct $T_m(f,g,h)$ obeys H\"older-type bounds (see also~\cite{coifmanAuDelaOperateurs1978}):
\begin{theorem}\label{thm:L1-symbol-bounds}
    Suppose $m$ is a symbol with $\check{m} \in L^1$ and $\frac{1}{p_1} + \frac{1}{p_2} + \frac{1}{p_3} = \frac{1}{p}$.  Then,
    \begin{equation}
        \lVert T_m(f,g,h) \rVert_{L^p} \lesssim \lVert \check{m} \rVert_{L^1} \lVert f \rVert_{L^{p_1}} \lVert g\rVert_{L^{p_2}} \lVert h\rVert_{L^{p_3}}
    \end{equation}
\end{theorem}
\begin{proof}
    By inverting the Fourier transform, we find
    \begin{equation*}
        T_m(f,g,h)(x) = \frac{1}{(2\pi)^{3/2}}\int \check{m}(y,z,w) f(x - y - z) g(x-y) h(x-y-w)\,dydzdw
    \end{equation*}
    which yields the result by Young's inequality.
\end{proof}
\begin{rmk}\label{rmk:freq-loc-symbol-bounds}
    In our analysis, we will often consider symbols $m$ which are supported on a region of volume $O\left(2^{3j}\right)$ and satisfy the symbol bounds
    \begin{equation*}
        |\partial_{\xi,\eta,\sigma}^\alpha m(\xi,\eta,\sigma)| \lesssim_\alpha 2^{-|\alpha|j}
    \end{equation*}
    For such symbols,
    \begin{equation*}
        \left|\check{m}(y,z,w)\right| \lesssim_N \frac{2^j}{(1 + |2^{j}y|)^N}\frac{2^{j}}{(1 + |2^{j}z|)^N}\frac{2^{j}}{(1 + |2^{j}w|)^N}
    \end{equation*}
    which shows that $m$ satisfies the hypotheses of~\Cref{thm:L1-symbol-bounds}.
\end{rmk}
\begin{rmk}\label{rmk:measure-symbols}
    Note that~\Cref{thm:L1-symbol-bounds} remains true (with essentially the same proof) if we weaken the hypothesis $\check{m} \in L^1$ to $\check{m} \in \mathcal{M}$, where $\mathcal{M}$ is the space of finite measures.  In particular, this generalization covers the motivating example $m = 1$ (which has $\check{m} = (2\pi)^{3/2} \delta \in \mathcal{M}$).  
\end{rmk}
For our application, it is useful to study frequency localized symbols $m_j$ supported on a region $|\xi| + |\eta| + |\sigma| \lesssim 2^j$ and satisfying the decay estimate
\begin{equation*}
    |\partial_{\xi,\eta,\sigma}^\alpha m_j(\xi,\eta,\sigma)| \lesssim_\alpha 2^{-j|\alpha|} 
\end{equation*}
In the work that follows, we will often allow these symbols to denote different functions from line-to-line.  These symbols satisfy the hypotheses of~\Cref{rmk:freq-loc-symbol-bounds}.  They also have the pseudolocality property given in the following lemma:
\begin{lemma}\label{thm:cm-paraprod-pseudolocality}
    Suppose that $f_1, f_2, f_3, f_4$ are functions, and suppose that $\supp f_i$ and $\supp f_k$ are separated by a distance $R$ for some $i \neq k$.  Then, for any $p_1,p_2,p_3,p_4,p \in [1,\infty]$ with $\frac{1}{p_1} + \frac{1}{p_2} + \frac{1}{p_3} + \frac{1}{p_4} = \frac{1}{p}$,
    \begin{equation}\label{eqn:pseudoloc-1}
        \lVert f_4 T_{m_j}(f_1,f_2,f_3)  \rVert_{L^p} \lesssim_N \jBra{2^j R}^{-N} \lVert f_1 \rVert_{L^{p_1}} \lVert f_2 \rVert_{L^{p_2}} \lVert f_3 \rVert_{L^{p_3}} \lVert f_4 \rVert_{L^{p_4}}
    \end{equation}
\end{lemma}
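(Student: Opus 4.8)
The plan is to reduce~\eqref{eqn:pseudoloc-1} to the physical-space representation of the pseudoproduct already obtained in the proof of~\Cref{thm:L1-symbol-bounds} and then combine the rapid decay of $\check{m}_j$ with an elementary geometric observation about the arguments appearing there. Writing
\begin{equation*}
    T_{m_j}(f_1,f_2,f_3)(x) = \frac{1}{(2\pi)^{3/2}}\int \check{m}_j(y,z,w)\, f_1(x - y - z)\, f_2(x - y)\, f_3(x - y - w)\;dy\,dz\,dw,
\end{equation*}
multiplying by $f_4(x)$, taking $L^p_x$ norms, and applying Minkowski's integral inequality in $(y,z,w)$, the left side of~\eqref{eqn:pseudoloc-1} is at most
\begin{equation*}
    \frac{1}{(2\pi)^{3/2}}\int |\check{m}_j(y,z,w)|\,\bigl\lVert f_4\, f_1(\cdot - y - z)\, f_2(\cdot - y)\, f_3(\cdot - y - w)\bigr\rVert_{L^p}\;dy\,dz\,dw.
\end{equation*}
By~\Cref{rmk:freq-loc-symbol-bounds}, the support condition and symbol bounds on $m_j$ give, for every $M$,
\begin{equation*}
    |\check{m}_j(y,z,w)| \lesssim_M \frac{2^j}{(1 + 2^j|y|)^M}\,\frac{2^j}{(1 + 2^j|z|)^M}\,\frac{2^j}{(1 + 2^j|w|)^M}.
\end{equation*}

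The key observation is that the four arguments $x$, $x - y - z$, $x - y$, $x - y - w$ of $f_4$, $f_1$, $f_2$, $f_3$ have all six pairwise differences --- namely $y$, $y + z$, $y + w$, $z$, $w$, and $z - w$ --- bounded in absolute value by $|y| + |z| + |w|$. Hence, whichever pair $\supp f_i$, $\supp f_k$ is separated by distance $R$, the product $f_4(x)\, f_1(x - y - z)\, f_2(x - y)\, f_3(x - y - w)$ vanishes identically in $x$ unless $|y| + |z| + |w| \geq R$, so we may restrict the $(y,z,w)$ integration to the set $\{|y| + |z| + |w| \geq R\}$. On this set at least one of $|y|, |z|, |w|$ is $\geq R/3$; choosing $M = 2N + 10$ and peeling off half of the decay of the corresponding factor in the bound for $\check{m}_j$ extracts a factor $\jBra{2^j R}^{-N}$ while leaving a product of the form $\prod_{v \in \{y,z,w\}} 2^j (1 + 2^j|v|)^{-M_v}$ with each $M_v \geq N + 5 > 1$, whose $L^1$ norm in $(y,z,w)$ is $\lesssim 1$. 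Finally, by translation invariance of the $L^{p_i}$ norms and the generalized H\"older inequality with $\frac{1}{p_1} + \frac{1}{p_2} + \frac{1}{p_3} + \frac{1}{p_4} = \frac{1}{p}$,
\begin{equation*}
    \bigl\lVert f_4\, f_1(\cdot - y - z)\, f_2(\cdot - y)\, f_3(\cdot - y - w)\bigr\rVert_{L^p} \leq \lVert f_1\rVert_{L^{p_1}}\lVert f_2\rVert_{L^{p_2}}\lVert f_3\rVert_{L^{p_3}}\lVert f_4\rVert_{L^{p_4}},
\end{equation*}
and integrating in $(y,z,w)$ yields~\eqref{eqn:pseudoloc-1}.

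I do not anticipate a genuine obstacle here; the only point requiring a little care is the case check that, regardless of which of the six pairs $(i,k)$ realizes the separation $R$, the relevant difference of arguments is always dominated by $|y| + |z| + |w|$. This is exactly what makes the argument uniform in the choice of separated pair, and is the reason the estimate can be stated symmetrically in $f_1, \dots, f_4$; once this is in hand, everything reduces to Minkowski's and H\"older's inequalities together with the Schwartz decay of $\check{m}_j$ furnished by~\Cref{rmk:freq-loc-symbol-bounds}. One should also record that the bound degrades to the plain H\"older estimate of~\Cref{thm:L1-symbol-bounds} when $R \lesssim 2^{-j}$, which is consistent with $\jBra{2^j R} \sim 1$ in that range.
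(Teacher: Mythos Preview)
Your proof is correct and follows essentially the same approach as the paper: pass to the physical-space representation of $T_{m_j}$, use the support separation to restrict the $(y,z,w)$ integration, and combine Minkowski, H\"older, and the rapid decay of $\check{m}_j$ from \Cref{rmk:freq-loc-symbol-bounds}. The only difference is cosmetic: the paper fixes the representative case $i=1$, $k=2$ (so the restriction is simply $|z|\geq R$) and declares the other pairs similar, whereas you handle all six pairs uniformly via the observation that every pairwise difference of arguments is bounded by $|y|+|z|+|w|$.
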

\begin{proof}
    We will assume $i=1, k = 2$, since the other cases are similar.  Arguing as in the proof of~\Cref{thm:L1-symbol-bounds} and using the hypothesis on the supports of $f_1$ and $f_2$, we have that
    \begin{equation*}
        f_4T_{m_j}(f_1,f_2,f_3) = \smashoperator[lr]{\int_{|z| \geq R}} \check{m}_j(y,z,w) f_1\left(x - y - z\right) f_2\left(x - y\right) f_3\left(x - y - w\right) f_4(x)\,dy dz dw
    \end{equation*}
    Using Minkowski's and H\"older's inequalities, we see that~\eqref{eqn:pseudoloc-1} reduces to showing that 
    \begin{equation*}
        \int_{|z| \geq R} |\check{m}_j(y,z,w)|\,dydzdw \lesssim_N \jBra{2^j R}^{-N}
    \end{equation*}
    Since~\Cref{rmk:freq-loc-symbol-bounds} gives us the bound
    \begin{equation*}
        |\check{m}_j(y,z,w)| \lesssim_N 2^{3j}\jBra{(2^j y,2^j z,2^j w)}^{-N-10}
    \end{equation*}
    the result follows.
\end{proof}

\section{Linear and multilinear estimates}\label{sec:linear-ests}

\subsection{The linear estimate}
We now turn our attention to the linear part of~\eqref{eqn:cmkdv}.  We begin by proving a linear estimate for the Airy propagator.  Define the spaces $X_j$ for $2^j \gtrsim t^{-1/3}$ by the norm
\begin{equation}\label{eqn:X-j-norm-def}
    \lVert f \rVert_{X_j} := \lVert \widehat{Q_{\sim j} f} \rVert_{L^\infty} + t^{-1/6} \lVert x Q_{\sim j} f \rVert_{L^2}
\end{equation}
and note that $\lVert f \rVert_{X_j}^2 \lesssim \lVert f \rVert_{X}^2$, where $X$ is the norm defined in~\eqref{eqn:X-def}.  
\begin{lemma}\label{thm:lin-decay-lemma}
    Let $u(x,t) = e^{-t\partial_x^3} f(x,t)$.  For $2^j > t^{-1/3}$, we have the pointwise estimate
    \begin{equation}\label{eqn:freq-loc-ptwise-decay}\begin{split}
        P^{\pm}_{j} u(x,t) =& \frac{1}{\sqrt{12 t \xi_0}} e^{\mp 2it\xi_0^3 \pm i \frac{\pi}{4}} \widehat{P^{\pm}_j f}\left(\pm \xi_0\right) \bbOne_{x < 0}\\ &\qquad + O\left(t^{-1/3} \left(2^j t^{1/3}\right)^{-9/14}\right) \bbOne_{x < 0}\chi_{\sim j}(x,t) \lVert f \rVert_{X_j}\\
        &\qquad + O\left(t^{-1/3} \left((2^j + 2^{-j/3} |\xi_0|^{4/3}) t^{1/3}\right)^{-3/2}\right)\lVert f \rVert_{X_j}
    \end{split}\end{equation}
    where $\xi_0 = \sqrt{\left|\frac{x}{3t}\right|}$.  Moreover, we have the estimate
    \begin{equation}\label{eqn:low-freq-ptwise-decay}
        |Q_{\leq \log t^{-1/3}} u(x,t)| \lesssim t^{-1/3} (1 + t^{2/3} \xi_0^2)^{-1} \lVert f \rVert_{X_{\leq \log t^{-1/3}}}
    \end{equation}
    so for $p \in [4, \infty]$,
    \begin{equation}\label{eqn:freq-loc-lp-decay}
        \lVert Q_j u \rVert_{L^p} \lesssim t^{-\frac{1}{2} + \frac{1}{p}} 2^{\left(\frac{2}{p} - \frac{1}{2}\right) j} \lVert f \rVert_{X_j}
    \end{equation}
    In particular, if $p > 4$,
    \begin{equation}\label{eqn:lp-decay}
        \lVert u \rVert_{L^p} \lesssim t^{-\frac{1}{3} + \frac{1}{3p}} \lVert f \rVert_{X}
    \end{equation}
\end{lemma}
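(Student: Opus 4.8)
The heart of the lemma is the frequency-localized pointwise estimate~\eqref{eqn:freq-loc-ptwise-decay}; the $L^p$ bounds~\eqref{eqn:freq-loc-lp-decay} and~\eqref{eqn:lp-decay} will follow by interpolation and summation in $j$. The plan is as follows. First I would fix $2^j > t^{-1/3}$ and write $P_j^\pm u(x,t)$ as an oscillatory integral: since $u = e^{-t\partial_x^3}f$, we have
\begin{equation*}
    P_j^\pm u(x,t) = \frac{1}{\sqrt{2\pi}}\int e^{i(x\xi + t\xi^3)} \psi_j(\xi)\bbOne_{\pm\xi > 0}\,\widehat{f}(\xi,t)\,d\xi.
\end{equation*}
The phase is $\varphi(\xi) = x\xi + t\xi^3$ with $\varphi'(\xi) = x + 3t\xi^2$, so there is a stationary point precisely when $x$ and $t$ have opposite signs (hence the factor $\bbOne_{x<0}$), located at $\xi = \pm\xi_0$ with $\xi_0 = \sqrt{|x/(3t)|}$, and $\varphi''(\pm\xi_0) = \pm 6t\xi_0$. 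I would then split the integral into a piece near the stationary point and a piece away from it. On the near piece, a standard stationary phase expansion (van der Corput with the leading-order term extracted) produces the main term $\frac{1}{\sqrt{12t\xi_0}}e^{\mp 2it\xi_0^3 \pm i\pi/4}\widehat{P_j^\pm f}(\pm\xi_0)$, where $\mp 2t\xi_0^3$ is the critical value $\varphi(\pm\xi_0)$ and $e^{\pm i\pi/4}$ comes from the sign of $\varphi''$. The error in this expansion is controlled by a norm that sees one derivative of $\psi_j \widehat{f}$, which is exactly why the $X_j$ norm with its $t^{-1/6}\|xQ_{\sim j}f\|_{L^2}$ piece (equivalently $t^{-1/6}\|\partial_\xi \widehat{Q_{\sim j}f}\|_{L^2}$) appears; tracking the powers of $t$ and $2^j$ through the rescaling $\xi = 2^j\zeta$ gives the $(2^j t^{1/3})^{-9/14}$ loss and the $\chi_{\sim j}$ spatial localization (the output is concentrated near the classical position $x \sim -t2^{2j}$). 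On the far piece — and also in the regime where there is no stationary point, or where $\xi_0$ is far from the support of $\psi_j$, i.e. $|\xi_0| \not\sim 2^j$ — I would integrate by parts repeatedly using $\frac{1}{i\varphi'}\partial_\xi e^{i\varphi} = e^{i\varphi}$; since $|\varphi'| \gtrsim t(2^{2j} + \xi_0^2)$ away from the stationary point, each integration by parts gains a factor $(t(2^{2j}+\xi_0^2))^{-1}$ times $2^{-j}$ from the symbol, producing the rapidly-decaying error term $O(t^{-1/3}((2^j + 2^{-j/3}|\xi_0|^{4/3})t^{1/3})^{-3/2})$ after optimizing the number of integrations by parts and applying Cauchy–Schwarz in $\xi$ against $\|\partial_\xi\widehat{Q_{\sim j}f}\|_{L^2}$.

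For the low-frequency estimate~\eqref{eqn:low-freq-ptwise-decay}, I would observe that $Q_{\leq\log t^{-1/3}}u = P_{\leq k_0}u$ with $2^{k_0}\sim t^{-1/3}$, so the integration is over $|\xi| \lesssim t^{-1/3}$. On this set $|t\xi^3| \lesssim 1$, so the Airy-type phase is essentially governed by $x\xi$; I would again split according to whether $|x| \lesssim t^{1/3}$ (where one estimates crudely by the volume $t^{-1/3}$ of the frequency support times $\|\widehat{f}\|_{L^\infty}$, giving $t^{-1/3}$ and noting $t^{2/3}\xi_0^2 = |x|/(3t^{1/3}) \lesssim 1$ there) or $|x| \gg t^{1/3}$ (where $|\varphi'| = |x + 3t\xi^2| \sim |x|$ and repeated integration by parts against the $\|\partial_\xi\widehat{f}\|_{L^2}$ control gives decay $|x|^{-1} t^{-1/3}\cdot t^{1/3}\cdot(\ldots) \sim t^{-1/3}(t^{2/3}\xi_0^2)^{-1}$, using that $|x|\sim t\xi_0^2$). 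Combining the two regimes gives the stated $(1 + t^{2/3}\xi_0^2)^{-1}$ envelope.

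Finally, the $L^p$ bounds: from~\eqref{eqn:freq-loc-ptwise-decay} and~\eqref{eqn:low-freq-ptwise-decay} I would extract the pointwise bound $|Q_j u(x,t)| \lesssim t^{-1/3}(2^j t^{1/3})^{-1/2}\langle \ldots\rangle^{-N}\|f\|_{X_j}$ near the classical region $|x| \sim t2^{2j}$, which has measure $\sim t 2^{2j}$; raising to the $p$-th power and integrating gives $\|Q_j u\|_{L^p}^p \lesssim (t^{-1/3}(2^j t^{1/3})^{-1/2})^p \cdot t2^{2j}\|f\|_{X_j}^p$, i.e. exactly~\eqref{eqn:freq-loc-lp-decay} after simplifying exponents, valid for $p\in[4,\infty]$ (the endpoint $p=4$ being where the measure gain balances the pointwise loss, and $p=\infty$ being the pointwise bound itself; the low-frequency block is handled the same way using~\eqref{eqn:low-freq-ptwise-decay}). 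Then~\eqref{eqn:lp-decay} follows for $p > 4$ by summing in $j$: using $\|f\|_{X_j} \lesssim \|f\|_X$ and almost orthogonality~\eqref{eqn:phy-and-fourier-almost-orthogonality}, $\|u\|_{L^p} \lesssim \sum_{2^j \gtrsim t^{-1/3}} \|Q_j u\|_{L^p} \lesssim t^{-1/2+1/p}\|f\|_X \sum_{2^j\gtrsim t^{-1/3}} 2^{(2/p - 1/2)j}$, and since $p > 4$ the exponent $2/p - 1/2 < 0$, so the geometric series sums to $\lesssim (t^{-1/3})^{2/p - 1/2} = t^{1/6 - 2/(3p)}$, giving the total power $t^{-1/2 + 1/p + 1/6 - 2/(3p)} = t^{-1/3 + 1/(3p)}$ as claimed.

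I expect the main obstacle to be the stationary phase estimate on the near piece: one must extract the leading term with an error expressed \emph{only} in terms of one $\xi$-derivative in $L^2$ (not $L^\infty$ or higher derivatives), which requires a careful van der Corput argument with the right rescaling, and then bookkeeping the fractional powers $9/14$ and $3/2$ and the spatial cutoff $\chi_{\sim j}$ correctly — in particular making sure the error genuinely decays relative to the main term $\sim (t2^{2j})^{-1/2}$ in the classical region and that the various regimes ($\xi_0 \sim 2^j$ versus $\xi_0 \ll 2^j$ versus $\xi_0 \gg 2^j$ versus no stationary point) patch together into the single clean statement~\eqref{eqn:freq-loc-ptwise-decay}.
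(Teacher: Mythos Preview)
Your proposal is correct and follows essentially the same approach as the paper: an oscillatory-integral representation, case-splitting according to the relative sizes of $2^j$ and $|\xi_0|$ (and the sign of $x$), a single integration by parts in the nonstationary regimes using the $L^2$ control on $\partial_\xi\widehat{Q_{\sim j}f}$, and a stationary-phase expansion in the region $x<0$, $|\xi_0|\sim 2^j$. The only point worth sharpening is the origin of the exponent $-9/14$: the paper obtains it not by iterated integration by parts but by a dyadic decomposition $\psi_\ell(\xi-\xi_0)$ around the stationary point with cutoff scale $2^{\ell_0}\sim t^{-1/3}(2^jt^{1/3})^{-3/7}$ chosen to balance the near-piece error (of size $t^{1/6}2^{3\ell_0/2}+t\,2^{4\ell_0}$) against the far-piece error (of size $t^{-5/6}2^{-j}2^{-\ell_0/2}$); this is the ``optimization'' you allude to, and one integration by parts suffices throughout.
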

\begin{proof}
    The estimate \Cref{eqn:freq-loc-lp-decay} follows directly from~\cref{eqn:freq-loc-ptwise-decay,eqn:low-freq-ptwise-decay}, and~\cref{eqn:lp-decay} follows from~\cref{eqn:low-freq-ptwise-decay,eqn:freq-loc-lp-decay} since
    \begin{equation*}
        \lVert u \rVert_{L^p} \lesssim \lVert Q_{\leq \log t^{-1/3}} u \rVert_{L^p} + \sum_{2^j > t^{-1/3}} \lVert Q_j u \rVert_{L^p}
    \end{equation*}
    so it only remains to prove~\eqref{eqn:freq-loc-ptwise-decay} and \eqref{eqn:low-freq-ptwise-decay}.
    
    The low frequency estimate \cref{eqn:low-freq-ptwise-decay} will follow once we prove the bound
    \begin{equation*}
        |Q_{\leq \log t^{-1/3}} u| \lesssim \min(t^{-1/3}, t^{-1} \xi_0^{-2})\lVert f \rVert_{X_{\leq \log t^{-1/3}}}
    \end{equation*}
    The bound $|Q_{\leq \log t^{-1/3}} u| \lesssim t^{-1/3} \lVert f \rVert_{X_{\leq \log t^{-1/3}}}$ follows immediately from the Hausdorff-Young inequality, so it suffices to consider the case $|\xi_0| \gg t^{-1/3}$.  In this case, writing
    \begin{equation*}
        Q_{\leq \log t^{-1/3}} u(x, t) = \frac{1}{\sqrt{2\pi}} \int_\bbR \psi_{\leq \log t^{-1/3}}(\xi) e^{it\philin(\xi)} \hat{f}(\xi)\,d\xi
    \end{equation*}
    for $\philin(\xi) = \frac{x}{t}\xi + \xi^3 = \xi^3 - 3\xi_0^2\xi$, the assumption $|\xi_0| \gg t^{-1/3}$ implies that $|\partial_\xi \philin| \sim |\xi_0|^2$ on the support of $\psi_{\leq \log t^{-1/3}}\hat{f}$, so integration by parts yields
    \begin{equation*}\begin{split}
        |Q_{\leq \log t^{-1/3}} u(x,t)| \lesssim& \frac{1}{t} \int_\bbR \left|\partial_\xi \left(\frac{1}{\partial_\xi \philin}\right)\right| |\psi_{\leq \log t^{-1/3}}\hat{f}(\xi)|\,d\xi\\
        &+ \frac{1}{t} \int_\bbR \left|\frac{1}{\partial_\xi \philin}\right| \left|\partial_\xi  \left(\psi_{\leq \log t^{-1/3}}\hat{f}(\xi)\right)\right|\,d\xi\\
        \lesssim& t^{-1} |\xi_0|^{-2} \lVert f \rVert_{X_{\leq t^{-1/3}}}
    \end{split}\end{equation*}
    as required.
    
    We now turn to the estimate~\eqref{eqn:freq-loc-ptwise-decay}.  We consider the estimate for $P^{+}_j u$: the estimate for $P^{-}_j u$ is similar.  As before, we write
    \begin{equation*}
        P^+_j u(x,t) = \frac{1}{\sqrt{2\pi}} \int_0^\infty \psi^+_j(\xi) e^{it\philin(\xi)} \hat{f}(\xi)\,d\xi
    \end{equation*}
    We distinguish between three cases depending on the relative sizes of $2^j$ and $|\xi_0|$ and the sign of $x$.
    
    \paragraph{\indent \textbf{Case} $|\xi_0| < 2^{j-10}$}
    In this case, $|\partial_\xi \philin| \sim 2^{2j}$, and integration by parts gives
    \begin{subequations}\begin{align}
        |P^+_j u(x,t)| \lesssim& \frac{1}{t} \int_0^\infty \left|\partial_\xi\left( \frac{\psi^+_j(\xi)}{\partial_\xi \philin}\right)\right| |\psi^+_{\sim j}(\xi)\hat{f}(\xi)|\,d\xi\label{eqn:lin-est-small-xi-0-a}\\
        &+ \frac{1}{t} \int_0^\infty \left|\psi^+_j(\xi) \frac{1}{\partial_\xi \philin}\right| |\psi^+_{\sim j}(\xi)\partial_\xi \hat{f}(\xi)|\,d\xi\label{eqn:lin-est-small-xi-0-b}
    \end{align}\end{subequations}
    For~\eqref{eqn:lin-est-small-xi-0-a}, we observe that $\partial_{\xi} \frac{\psi^+_j(\xi)}{\partial_\xi \philin}$ has size $O(2^{-3j})$ and is supported on a region of size $O(2^j)$, so using Hardy's inequality~\eqref{eqn:hardy-est} yields
    \begin{equation*}\begin{split}
        \eqref{eqn:lin-est-small-xi-0-a} 
        \lesssim& t^{-1/3} \left(t^{1/3} 2^j \right)^{-3/2} \lVert f \rVert_{X_j}
    \end{split}\end{equation*}
    Similarly, $\frac{\psi^+_j(\xi)}{\partial_\xi \philin}$ has size $O(2^{-2j})$ and is supported on a region of size $O(2^j)$, so
    \begin{equation*}\begin{split}
        \eqref{eqn:lin-est-small-xi-0-b} 
        \lesssim& t^{-1/3} \left(2^j t^{1/3}\right)^{-3/2} \lVert f \rVert_{X_j}
    \end{split}\end{equation*}
    
    \paragraph{\indent \textbf{Case} $|\xi_0| > 2^{j+10}$}
    In this case, $|\partial_\xi \philin| \sim \xi_0^2$, and a quick calculation gives that
    \begin{equation}\label{eqn:lin-est-large-xi-0-bounds}\begin{split}
        \left\lVert \partial_{\xi} \frac{\psi^+_j(\xi)}{\partial_\xi \philin(\xi)} \right\rVert_{L^2} \lesssim& \frac{1}{\xi_0^2 2^{j/2}}\\
        \left\lVert \frac{\psi^+_j(\xi)}{\partial_\xi \philin(\xi)} \right\rVert_{L^2} \lesssim& \frac{2^{j/2}}{\xi_0^2}\\
    \end{split}\end{equation}
    Integrating by parts, we find that
    \begin{subequations}\begin{align}
        |P_j^{\pm} u(x,t)| \lesssim& \frac{1}{t} \int_0^\infty \left|\partial_{\xi} \frac{\psi^+_j(\xi)}{\partial_\xi \philin(\xi)} \right| |\psi^+_{\sim j} \hat{f}(\xi)|\,d\xi\label{eqn:lin-est-large-xi-0-1}\\
        &+ \frac{1}{t}\int_0^\infty \left|\frac{\psi^+_j(\xi)}{\partial_\xi \philin(\xi)} \right| |\psi^+_{\sim j} \partial_\xi \hat{f}(\xi)|\,d\xi\label{eqn:lin-est-large-xi-0-2}
    \end{align}\end{subequations}
    Using the bounds~\eqref{eqn:lin-est-large-xi-0-bounds} and arguing as in the case $|\xi_0| < 2^{j-10}$, we find that
    \begin{equation*}\begin{split}
        \eqref{eqn:lin-est-large-xi-0-1} + \eqref{eqn:lin-est-large-xi-0-2} 
        \lesssim& t^{-1/3} \left(t^{1/3} 2^{-j/3}\xi_0^{-4/3}\right)^{-3/2}\lVert f \rVert_{X_{j}}
    \end{split}\end{equation*}
    
     \paragraph{\indent \textbf{Case} $x > 0$, $2^{j-10} \leq |\xi_0| \leq 2^{j+10}$}
     
     Here, $|\partial_\xi \philin| \sim \xi_0^{-2}$, so the estimate is identical to the previous case.
    
    \paragraph{\indent \textbf{Case} $x < 0$, $2^{j-10} \leq |\xi_0| \leq 2^{j+10}$}
    Since $\partial_\xi \philin$ vanishes at $\xi = \xi_0$, we employ the method of stationary phase.  Let us write 
    \begin{equation*}
        P^+_j u(x,t) = \sum_{\ell = \ell_0}^{j+10} I_{j,\ell}
    \end{equation*}
    where
    \begin{equation*}\begin{split}
        I_{j,\ell} =& \frac{1}{\sqrt{2\pi}} \int_0^\infty \psi^+_j(\xi) \psi_{\ell}(\xi - \xi_0) e^{it\philin(\xi)} \widehat{Q_{\sim j}f}(\xi)\,d\xi, \qquad \ell > \ell_0\\
        I_{j,\ell_0} =& \frac{1}{\sqrt{2\pi}} \int_0^\infty \psi^{+}_j(\xi)\psi_{\leq \ell_0}(\xi - \xi_0) e^{it\philin(\xi)} \widehat{Q_{\sim j} f}(\xi)\,d\xi
    \end{split}\end{equation*}
    and $\ell_0$ is chosen such that $2^{\ell_0} \sim t^{-1/3} (2^j t^{1/3})^{-3/7}$.
    
    For the $I_{j,\ell}$ factors with $\ell > \ell_0$, we have that $|\partial_\xi \philin| \sim 2^\ell 2^j$.  Integrating by parts, we find that
    \begin{equation*}\begin{split}
        |I_{j,\ell}| \lesssim& \frac{1}{t} \int_0^\infty \left| \partial_\xi \frac{\psi^+_j(\xi)\psi_{\ell}(\xi - \xi_0)}{\partial_\xi\philin} \right| |\psi_{\sim j}(\xi) \hat{f}(\xi)| \,d\xi\\
        &+ \frac{1}{t} \int_0^\infty \left|\frac{\psi^+_j(\xi)\psi_{\ell}(\xi - \xi_0)}{\partial_\xi\philin} \right| |\psi_{\sim j}(\xi) \partial_\xi\hat{f}(\xi)| \,d\xi\\
        \lesssim& t^{-1} \left(2^{-j}2^{-\ell} \lVert \widehat{Q_{\sim j}f} \rVert_{L^\infty} + 2^{-j} 2^{-\ell/2} \lVert \partial_\xi \widehat{Q_{\sim j}f} \rVert_{L^2}\right)
    \end{split}\end{equation*}
    summing over $\ell > \ell_0$ gives
    \begin{equation}\label{eqn:far-from-st-pt-est}\begin{split}
        \sum_{\ell > \ell_0}|I_{j,\ell}| 
        \lesssim& \left( t^{-1}  2^{-j} 2^{-\ell_0} + t^{-5/6} 2^{-j} 2^{-\ell_0/2}\right) \lVert f \rVert_{X_j}
    \end{split}\end{equation}
    For the $I_{j,\ell_0}$ term, we write
    \begin{subequations}\begin{align}
        I_{j,\ell_0} =& \frac{1}{\sqrt{2\pi}} \int_0^\infty  \psi_{\leq \ell_0}(\xi - \xi_0) e^{it\philin(\xi)} \left(\psi_j^{+}(\xi)\widehat{Q_{\sim j}f}(\xi) - \psi_j^{+}(\xi_0)\widehat{Q_{\sim j}f}(\xi_0)\right)\,d\xi\label{eqn:lin-stat-ph-1}\\
        &+ \frac{1}{\sqrt{2\pi}} \psi_j^{+}(\xi_0) \hat{f}(\xi_0) \int_0^\infty  \psi_{\leq \ell_0}(\xi - \xi_0) \left(e^{it\philin(\xi)} - e^{6it\xi_0 (\xi-\xi_0)^2 -2 it \xi_0^3}\right) \,d\xi\label{eqn:lin-stat-ph-2}\\
        &+ \frac{1}{\sqrt{2\pi}} \psi_j^{+}(\xi_0)\hat{f}(\xi_0)e^{-2it\xi_0^3} \int_0^\infty  \psi_{\leq \ell_0}(\xi - \xi_0) e^{6it \xi_0 \xi^2} \,d\xi\label{eqn:lin-stat-ph-3}
    \end{align}\end{subequations}
    For the first term, we note that
    \begin{equation*}\begin{split}
        \Big| \psi^+_j(\xi)\widehat{Q_{\sim j} f}(\xi) - \psi^+_j(\xi_0)\widehat{Q_{\sim j} f}(\xi_0)\Big| \lesssim& \big(2^{-j}|\xi - \xi_0| + t^{1/6}|\xi - \xi_0|^{1/2} \big) \lVert f \rVert_{X_j}
    \end{split}\end{equation*}
    by the Sobolev-Morrey embedding $\dot{H}^1 \to C^{0,1/2}$.  Using this bound, we find that
    \begin{equation*}\begin{split}
        |\eqref{eqn:lin-stat-ph-1}| 
        \lesssim& \left(2^{3/2\ell_0} t^{1/6} + 2^{-j} 2^{2\ell_0}\right) \lVert f \rVert_{X_j}
    \end{split}\end{equation*}
    For the second term, we observe that
    \begin{equation*}
        \philin(\xi) = -2\xi_0^3 + 6\xi_0 (\xi - \xi_0)^2 + (\xi - \xi_0)^3
    \end{equation*} 
    so 
    \begin{equation*}\begin{split}
        |\eqref{eqn:lin-stat-ph-2}| \lesssim& \lVert \psi_{j}^+(\xi) \hat{f} \rVert_{L^\infty}\int_0^\infty \psi_{\leq \ell_0} (\xi - \xi_0) \left| e^{it (\xi - \xi_0)^3} - 1 \right| \,d\xi\\ 
        \lesssim& t 2^{4\ell_0} \lVert f \rVert_{X_k}
    \end{split}\end{equation*}
    Finally, rescaling and using the classical stationary phase estimate gives
    \begin{equation*}\begin{split}
        \eqref{eqn:lin-stat-ph-3} =& \frac{1}{\sqrt{2\pi}} \psi_j^{+}(\xi_0)\hat{f}(\xi_0)e^{-2it\xi_0^3} \int_0^\infty  \psi_{\leq \ell_0}(\xi - \xi_0) e^{6it \xi_0 (\xi-\xi_0)^2} \,d\xi\\
        =& \frac{2^{\ell_0}}{\sqrt{2\pi}} \psi_j^{+}(\xi_0)\hat{f}(\xi_0)e^{-2it\xi_0^3} \int_\bbR  \psi_{\leq 0}(\xi) e^{6it \xi_0 2^{2\ell_0} \xi^2} \,d\xi\\
        =& \frac{\psi_j^{+}(\xi_0)\hat{f}(\xi_0)}{\sqrt{12 t \xi_0}}e^{-2it\xi_0^3+i\frac{\pi}{4}} + O\left(t^{-3/2}2^{-2\ell_0} 2^{-3/2j} \lVert \widehat{Q_{\sim j} f} \rVert_{L^\infty}\right)
    \end{split}\end{equation*}
    Collecting the terms~\cref{eqn:far-from-st-pt-est,eqn:lin-stat-ph-1,eqn:lin-stat-ph-2,eqn:lin-stat-ph-3} and recalling the definition of $\ell_0$, we find that
    \begin{equation*}
        P^+_j u(x, t) = \frac{\psi_j^{+}(\xi_0)\hat{f}(\xi_0)}{\sqrt{12 t \xi_0}}e^{-2it\xi_0^3 + i\frac{\pi}{4}} + O\left(t^{-1/3}(2^j t^{1/3})^{-9/14}\lVert f \rVert_{X_j}\right)
    \end{equation*}
\end{proof}

As a corollary of the above estimate, we obtain an improved bilinear decay estimate:
\begin{corollary}\label{thm:simple-bilinear-decay}
    If $f, g \in X$, then
    \begin{equation*}
        |e^{-t\partial_x^3}f e^{-t\partial_x^3} \partial_x g| \lesssim t^{-1} \lVert f \rVert_{X} \lVert g \rVert_{X}
    \end{equation*}
\end{corollary}
\begin{proof}
    It suffices to show that
    \begin{equation*}
        |\chi_k e^{-t\partial_x^3}f e^{-t\partial_x^3} \partial_x g| \lesssim t^{-1} \lVert f \rVert_{X} \lVert g \rVert_{X}
    \end{equation*}
    Using~\Cref{thm:lin-decay-lemma}, we find that
    \begin{equation*}
        |\chi_{\sim k} Q_{j} e^{-t\partial_x^3}f| \lesssim \begin{cases}
            t^{-5/6} 2^{-3/2j} \lVert f \rVert_{X_j} & k < j - 20\\
            t^{-1/2} 2^{-k/2} \lVert f \rVert_{X_j} & |j - k| \leq 20\\
            t^{-5/6} 2^{j/2 - 2k} \lVert f \rVert_{X_j} & k > j - 20
        \end{cases}
    \end{equation*}
    Thus, summing in $j$, we find that
    \begin{equation*}
        |\chi_{\sim k} e^{-t\partial_x^3}f| \lesssim t^{-1/2} 2^{-k/2} \lVert f \rVert_{X}
    \end{equation*}
    Since $\lVert \partial_x g \rVert_{X_j} \sim 2^j \lVert g \rVert_{X_j}$, a similar calculation shows that
    \begin{equation*}
        |\chi_{\sim k} e^{-t\partial_x^3} \partial_x g| \lesssim t^{-1/2} 2^{k/2} \lVert g \rVert_{X}
    \end{equation*}
    which gives the result.
\end{proof}

\begin{rmk}
    Although \Cref{thm:simple-bilinear-decay} does not apply directly to pseudoproducts, we will see in \Cref{sec:weighted-L2} that~\Cref{thm:lin-decay-lemma} can be used together with the pseudolocality of pseudoproducts given in~\Cref{thm:cm-paraprod-pseudolocality} to give bounds of the same type for pseudoproducts involving a $\partial_x u$ term.
\end{rmk}
From the proof of~\Cref{thm:simple-bilinear-decay}, we also get a decay bound on $\chi_{\geq k} u$:
\begin{corollary}\label{thm:u-space-loc-lin-decay}
    If $f \in X$ and $u = e^{-t\partial_x^3} f$,
    \begin{equation*} \lVert \chi_{\geq k} u \rVert_{L^\infty} \lesssim t^{-1/2} 2^{-k/2} \lVert f \rVert_X
    \end{equation*}
\end{corollary}

It will also be important later to have decay estimates for $e^{-t\partial_x^3} g$ when $\hat{g}(0) = 0$.  We record them here:
\begin{corollary}\label{thm:w-lin-decay}
    Suppose $\hat{g}(0) = 0$ and $xg \in L^2$.  Then, if $w = e^{-t\partial_x^3} g$, we have the bounds 
    \begin{equation}\label{eqn:w-lin-decay-est}
        |Q_j w(x,t)| \lesssim \left(t^{-1/2} \chi_{\sim j} + t^{-5/6} 2^{j/2} \left(2^j + 2^{-j/3} |\xi_0|^{4/3} \right)^{-3/2} \right) \lVert xg \rVert_{L^2}c_j
    \end{equation}
    and
    \begin{equation}\label{eqn:w-space-loc-decay-est}
        \lVert \chi_{k} w \rVert_{L^\infty} \lesssim t^{-1/2} \lVert xg \rVert_{L^2} c_k
    \end{equation}
\end{corollary}
\begin{proof}
    By the Sobolev-Morrey embedding $\dot{H}^1 \to C^{0,1/2}$, we have that for $2^j \gtrsim t^{-1/3}$,
    \begin{equation*}\begin{split}
        \lVert g \rVert_{X_j} 
        \lesssim& (1 + t^{-1/6}2^{-j/2}) \lVert \mathcal{F}\left({Q_{[j - 20, j + 20]} g}\right) \rVert_{L^\infty} + t^{-1/6}\lVert Q_{[j - 20, j + 20]} (xg) \rVert_{L^2}\\
        \lesssim& 2^{j/2} \lVert Q_{[j - 30, j + 30]} (xg) \rVert_{L^2}
    \end{split}\end{equation*}
    so applying~\Cref{thm:lin-decay-lemma} gives~\eqref{eqn:w-lin-decay-est}.  To prove the localized bound~\eqref{eqn:w-space-loc-decay-est}, we note that~\eqref{eqn:w-lin-decay-est} implies that
    \begin{equation*}\begin{split}
        \lVert \chi_{k} w \rVert_{L^\infty} \leq& \lVert \chi_{k} Q_{\sim k} w \rVert_{L^\infty} + \lVert \chi_{k} Q_{\ll k} w \rVert_{L^\infty} + \lVert \chi_{k} Q_{\gg k} w \rVert_{L^\infty}\\
        \lesssim& \sum_{|k-\ell| \leq 10} t^{-1/2} \lVert xg \rVert_{L^2}c_{\ell} + t^{-5/6} \sum_{\ell < k-10 } 2^{\ell - 2k} \lVert xg \rVert_{L^2} + \sum_{\ell > k+ 10} 2^{-\ell} \lVert xg \rVert_{L^2}\\
        \lesssim& t^{-1/2} \lVert xg \rVert_{L^2}c_k + t^{-5/6} 2^{-k} \lVert xg \rVert_{L^2}
    \end{split}\end{equation*}
    which can be seen to satisfy the required $\ell^2_k$ summability condition.
\end{proof}

\subsection{Bounds for cubic terms\label{sec:cubic-bounds}}

Since the complex mKdV equation has a cubic nonlinearity, we will naturally find ourselves dealing with frequency-localized terms of the form $Q_{j}(|u|^2 \partial_x u)$ and the like.  In this section, we collect some basic bounds for these terms for later reference.  Let $u_i = e^{-t\partial_x^3} f_i$ for $i = 1,2,3$ and $f_i \in X$.  We begin by considering $\chi_{\geq j - 10} Q_{\sim j}(u_1 u_2 u_3)$.  Using~\eqref{eqn:pseudoloc-comm-est} to control the error in commuting the physical and Fourier localization operators, we find that
\begin{equation}\label{eqn:cubic-sim-larger}\begin{split}
    \lVert \chi_{\geq j - 10} Q_{\sim j}(u_1 u_2 u_3)\rVert_{L^\infty} \lesssim& \lVert \chi_{\geq j - 10} Q_{\sim j} (\chi_{\geq j - 20} u_1 u_2 u_3)\rVert_{L^\infty} \\
    & + \lVert \chi_{\geq j - 10} Q_{\sim j} (\chi_{< j - 20} u_1 u_2 u_3)\rVert_{L^\infty}\\
    \lesssim& t^{-3/2} 2^{-3/2j} \prod_{i=1}^3 \lVert f_i \rVert_X
\end{split}\end{equation}
Arguing in the same manner, we find that
\begin{equation}\label{eqn:cubic-bounds-compendium}\begin{split} 
    \lVert \chi_{\geq j - 10} Q_{\lesssim j}(u_1 u_2 u_3) \rVert_{L^\infty} \lesssim& t^{-3/2} 2^{-3/2 j}\lVert f_1 \rVert_{X}\lVert f_2 \rVert_{X}\lVert f_3 \rVert_{X}
\end{split}\end{equation}
Moreover, by using the bilinear decay estimate given in~\Cref{thm:simple-bilinear-decay}, we can obtain analogous bounds for terms containing a derivative:
\begin{equation}\label{eqn:cubic-deriv-bounds-compendium}\begin{split}
    \lVert \chi_{\geq j - 10} Q_{\sim j}(u_1 \partial_x u_2 u_3) \rVert_{L^\infty} \lesssim& t^{-3/2} 2^{-j/2 }\prod_{i=1}^3 \lVert f_i \rVert_X\\
    \lVert \chi_{\geq j - 10} Q_{\lesssim j}(u_1 \partial_x u_2 u_3) \rVert_{L^\infty} \lesssim& t^{-3/2} 2^{-j/2 }\prod_{i=1}^3 \lVert f_i \rVert_X
\end{split}\end{equation}

In the bounds~\cref{eqn:cubic-sim-larger,eqn:cubic-bounds-compendium,eqn:cubic-deriv-bounds-compendium}, the frequency localization in some sense does not play a role, as the bounds would remain true if we removed the frequency projection operator.  The situation changes when we consider the cubic $Q_{\sim j}(u_1 u_2 u_3)$ in the region $|x| \ll t 2^{2j}$, since the frequency localization eliminates the worst contribution in this region.  A straightforward paraproduct decomposition yields
\begin{equation*}\begin{split}
    Q_{\sim j}\left(u_1 u_2 u_3\right) =& Q_{\sim j} \bigg(u_{1, < j-20}u_{2, < j-20 }u_{3, < j - 20} +  u_{1,[j-20, j+20]} u_{2, < j - 20} u_{3, < j - 20} \\
    &\qquad+ \sum_{\ell \geq j - 20} u_{1, \ell} u_{2,[\ell -20, \ell + 20]} u_{3, < \ell - 20} \\
    &\qquad+ \sum_{\ell \geq j - 20} u_{1, \ell} u_{2,[\ell -20, \ell + 20]} u_{3, [\ell -20, \ell + 20]}\bigg)\\ &+ \{\text{similar terms}\}
\end{split}\end{equation*}
Note that the term $Q_{\sim j}(u_{1, < j-20}u_{2, < j-20 }u_{3, < j - 20})$ vanishes.  If $k < j - 30$, then we can use~\eqref{eqn:pseudoloc-comm-est} to commute the physical- and frequency-space localizations and apply~\Cref{thm:lin-decay-lemma} to find that
\begin{equation*}\begin{split}
    \Big\lVert \chi_{k} Q_{\sim j} \big(u_{1, [j -20, j + 20]} u_{2, < j-20}, u_{3, < j - 20}\big) \Big\rVert_{L^\infty} \lesssim& t^{-11/6} 2^{-3/2 j-k}\prod_{i=1}^3 \lVert f_i \rVert_X\\
    \Big\lVert \chi_{k} Q_{\sim j} \big(u_{1, \ell} u_{2, [\ell -20, \ell + 20]} u_{3, < \ell - 20}\big)\Big\rVert_{L^\infty} \lesssim& t^{-\frac{13}{6}}2^{-j-3/2k-2\ell} \prod_{i=1}^3 \lVert f_i \rVert_X\\
    \Big\lVert \chi_{k} Q_{\sim j} \big(u_{1, \ell} u_{2,[\ell -20, \ell + 20]} u_{3, [\ell -20, \ell + 20]}\big)\Big\rVert_{L^\infty} \lesssim &  t^{-5/2}2^{-j-2k-3/2 \ell}\prod_{i=1}^3 \lVert f_i \rVert_X
\end{split}\end{equation*}
Summing in $\ell$, we find that
\begin{equation}\label{eqn:cubic-sim-with-k}
    \lVert \chi_{k} Q_{\sim j}\left(u_1 u_2 u_3\right) \rVert_{L^\infty} \lesssim t^{-11/6}2^{-3/2 j}2^{-k}\prod_{i=1}^3 \lVert f_i \rVert_X
\end{equation}
Summing over $k < j - 30$ and combining the result with~\eqref{eqn:cubic-sim-larger}, we obtain the bound
\begin{equation}\label{eqn:cubic-sim}
    \lVert Q_{\sim j}\left(u_1 u_2 u_3\right) \rVert_{L^\infty} \lesssim t^{-3/2} 2^{-3/2j} \prod_{i=1}^3 \lVert f_i \rVert_X
\end{equation}
By a similar argument, we find that for $k < j - 30$,
\begin{equation}\label{eqn:cubic-sim-w-deriv-small}
    \lVert \chi_k Q_{\sim j} (u_1 \partial_x u_2 u_3)\rVert_{L^\infty} \lesssim t^{-11/6} 2^{-j/2} 2^{-k}\prod_{i=1}^3 \lVert f_i \rVert_X
\end{equation}
so
\begin{equation}
    \label{eqn:cubic-sim-w-deriv}
    \lVert Q_{\sim j} (u_1 \partial_x u_2 u_3)\rVert_{L^\infty} \lesssim t^{-3/2} 2^{-j/2} \prod_{i=1}^3 \lVert f_i \rVert_X
\end{equation}

It will also be important to bound the mean of terms like $|u|^2 \partial_x u$, since these terms will arise naturally when we consider $\partial_t \hat{u}(0,t)$.
\begin{theorem}\label{thm:cubic-mean-thm}
    Suppose $u_i = e^{-t\partial_x^3} f_i$ for $i=1,2,3$.  Then,
    \begin{equation}\label{eqn:cubic-mean-bound}
        \int u_1 \overline{u_2} \partial_x u_3 \,dx \lesssim t^{-1} \prod_{i=1}^3\lVert f_i \rVert_{X} 
    \end{equation}
    If instead $\hat{f}_k(0,t) = 0$ for some $k = 1 ,2,3$, then
    \begin{equation}\label{eqn:cubic-mean-bound-mean-0}
        \int u_1 \overline{u_2} \partial_x u_3 \,dx \lesssim t^{-7/6} \lVert Lu_k \rVert_{L^2}\prod_{i\in \{1,2,3\}\setminus\{k\}}\lVert f_i \rVert_{X} 
    \end{equation}
    
\end{theorem}
\begin{proof}
    Let us write
    \begin{equation*}
        \int u_1 \overline{u_2} \partial_x u_3 \,dx = \rmI_j + \tilde{\rmI}_j
    \end{equation*}
    where 
    \begin{equation*}\begin{split}
        \rmI_j =& \int u_{1,j} \overline{u_{2,\leq j}} \partial_x u_3 \,dx\\
        \tilde{\rmI}_j =& \int u_{1,< j} \overline{u_{2,j}} \partial_x u_3 \,dx
    \end{split}
    \end{equation*}
    We will show how to bound $\rmI_j$: the bounds for $\tilde{\rmI}_j$ are analogous.  For $2^j \sim t^{-1/3}$, the frequency localization allows us to replace the $u_3$ factor in the definition with $u_{3, \lesssim j}$, so \begin{equation*}\begin{split}
        I_j \lesssim& \lVert u_{1,j} \rVert_{L^\infty} \lVert u_{2,\leq j} \rVert_{L^2} \lVert \partial_x u_{3, \lesssim j} \rVert_{L^2}\\
        \lesssim& t^{-1/3} 2^{2j} \prod_{i=1}^3 \lVert f_i \rVert_X\\
        \lesssim& t^{-1} \prod_{i=1}^3 \lVert f_i \rVert_X
    \end{split}\end{equation*}
    which is acceptable, so it only remains to consider the case $2^j \gg t^{-1/3}$.  There, we can write
    \begin{equation*}
        I_j = -\int e^{it\phi(0,\eta,\sigma)} (\eta + \sigma) \psi_j(\eta) \psi_{\leq j}(-\sigma) \hat{f_1}(\eta) \overline{\hat{f_2}(-\sigma)} \hat{f_3}(-\eta -\sigma)\,d\eta d\sigma
    \end{equation*}
    where $\phi$ is the four-wave mixing function given in~\eqref{eqn:4-wave-phase}.  Since $\nabla_{\eta,\sigma} \phi(0,\eta, \sigma)$ vanishes only at $\eta = \sigma = 0$, the phase in these integrals is nonstationary, and we can integrate by parts using the identity
    \begin{equation*}
        e^{it \phi(0,\eta,\sigma)} =  \frac{\nabla_{\eta,\sigma} \phi(0,\eta, \sigma)}{it|\nabla_{\eta,\sigma} \phi(0,\eta, \sigma)|^2} \cdot e^{it \phi(0,\eta,\sigma)}
    \end{equation*}
    to find that
    \begin{subequations}\begin{align}
        I_j =& -\frac{i}{t} \int \nablaes \cdot \mu_j \hat{f_1}(\eta) \overline{\hat{f_2}(-\sigma)}\hat{f_3}(-\eta -\sigma) \,d\eta d\sigma \label{eqn:I-j-1}\\
        &- \frac{i}{t} \int \mu_j^{\eta} \partial_\eta\hat{f_1}(\eta) \overline{\hat{f_2}(-\sigma)}\hat{f_3}(-\eta -\sigma) \,d\eta d\sigma \label{eqn:I-j-2}\\
        &- \frac{i}{t} \int \mu_j^{\sigma} \hat{f_1}(\eta) \partial_\sigma\overline{\hat{f_2}(-\sigma)}\hat{f_3}(-\eta -\sigma) \,d\eta d\sigma \label{eqn:I-j-3}\\
        &+ \frac{i}{t} \int (\mu_j^\eta + \mu_j^\sigma) \hat{f_1}(\eta) \overline{\hat{f_2}(-\sigma)}\partial_\eta\hat{f_3}(-\eta -\sigma) \,d\eta d\sigma \label{eqn:I-j-4}
    \end{align}\end{subequations}
    where $\mu_j$ is the (vector-valued) symbol
    \begin{equation*}
        \mu_j(\eta,\sigma) = \frac{(\eta + \sigma)\psi_j(\eta) \psi_{\leq j}(-\sigma) \nabla_{\eta,\sigma} \phi(0,\eta,\sigma)}{|\nabla_{\eta,\sigma} \phi(0,\eta,\sigma)|^2}
    \end{equation*}
    with components $\mu_j^\eta$ and $\mu_j^\sigma$.  Let us first consider~\eqref{eqn:I-j-1}.  We can rewrite this term as the Fourier transform of a pseudoproduct:
    \begin{equation*}
        \eqref{eqn:I-j-1} = 2\pi i t^{-1} 2^{-2j} \hat{T}_{\mu^1_j}(u_{1,\sim j}, \overline{u_{2, \lesssim j}}, u_{3, \lesssim j})(0)
    \end{equation*}
    with symbol
    \begin{equation*}
        \mu^1_j = 2^{2j} \psi_j(\xi) \nabla_{\eta,\sigma} \cdot \mu_j
    \end{equation*}
    Now, $\mu^1_j$ obeys the symbol bounds
    \begin{equation*}
        |\partial^\alpha_{\xi,\eta,\sigma} \mu_j^1| \lesssim 2^{-|\alpha|j}, \qquad\qquad |\supp \mu_j^1| \lesssim 2^{3j}
    \end{equation*}
    so by~\Cref{rmk:freq-loc-symbol-bounds} the pseudoproduct $T_{\mu_j^1}(\cdot,\cdot,\cdot)$ satisfies H\"older-type bounds uniformly in $j$.  Thus, the Hausdorff-Young inequality gives us the bound
    \begin{equation*}\begin{split}
        |\eqref{eqn:I-j-1}| 
        \lesssim& t^{-1} 2^{-2j} \lVert u_{1,\sim j} \rVert_{L^\infty} \lVert u_{2,\lesssim j} \rVert_{L^2} \lVert u_{3,\lesssim j} \rVert_{L^2}\\
        \lesssim& t^{-3/2} 2^{-3/2 j} \prod_{i=1}^3 \lVert f_i \rVert_X
    \end{split}
    \end{equation*}
    Turning to~\eqref{eqn:I-j-2}, we observe that the support condition implies that the three frequencies $\eta$, $\sigma$, and $- \eta - \sigma$ are $\lesssim 2^j$, and at least two of them must have magnitudes comparable to $2^j$.  Thus, we can write
    \begin{equation*}\begin{split}
        \eqref{eqn:I-j-2} =& 2\pi i t^{-1} 2^{-j} \hat{T}_{\mu^2_j}(Lu_1, \overline{u_{2, \sim j}}, u_{3, \lesssim j})(0) + 2\pi i t^{-1} 2^{-j} \hat{T}_{\mu^3_j}(Lu_1, \overline{u_{2, \ll j}}, u_{3, \sim j})(0)
    \end{split}\end{equation*}
    with
    \begin{equation*}\begin{split}
        \mu_j^2 =& 2^j \psi_j(\xi) m_j^{\eta} \\
        \mu_j^3 =& 2^j \psi_j(\xi) m_j^{\eta}
    \end{split}\end{equation*}
    Using the fact that $\mu_j^2$ and $\mu_j^3$ satisfy the symbol bounds from~\Cref{rmk:freq-loc-symbol-bounds} uniformly in $j$, we find that
    \begin{equation*}\begin{split}
        |\eqref{eqn:I-j-2}| 
        \lesssim& t^{-1} 2^{-j} \lVert Lu_1 \rVert_{L^2} \left(\lVert u_{2,\sim j} \rVert_{L^\infty} \lVert u_{3,\lesssim j} \rVert_{L^2} + \lVert u_{2, \ll j} \rVert_{L^2} \lVert u_{3, \sim j} \rVert_{L^\infty} \right)\\
        \lesssim& t^{-4/3} 2^{-j} \prod_{i=1}^3 \lVert f_i \rVert_X
    \end{split}\end{equation*}
    The estimates for~\eqref{eqn:I-j-3} and~\eqref{eqn:I-j-4} are analogous.  Thus, 
    \begin{equation*}
        \sum_{2^j \gg t^{-1/3}} I_j \lesssim t^{-1} \prod_{i=1}^3 \lVert f_i \rVert_X
    \end{equation*}
    which completes the proof of~\eqref{eqn:cubic-mean-bound}.  
    
    To prove~\eqref{eqn:cubic-mean-bound-mean-0}, we observe that the only place we used the assumption $f_k \in X$ was to obtain bounds of the form
    \begin{equation*}\begin{split}
        \lVert Lu_k \rVert_{L^2} \lesssim& t^{1/6} \lVert f_k \rVert_X\\
        \lVert u_{k, \lesssim j} \rVert_{L^2} \lesssim& 2^{j/2} \lVert f_k \rVert_X\\
        \lVert u_{k, \sim j} \rVert_{L^\infty} \lesssim& t^{-1/2} 2^{-j/2} \lVert f_k \rVert_X
    \end{split}\end{equation*}
    If we instead estimate these quantities in terms of $\lVert Lu_k \rVert_{L^2}$, we gain a factor of $t^{1/6}$ when estimating terms containing $Lu_k$.  For the other terms, we see from~\Cref{thm:w-lin-decay} that
    \begin{equation*}\begin{split}
        \lVert u_{k,\lesssim j} \rVert_{L^2} \lesssim& 2^j \lVert Lu_k \rVert_{L^2}\\
        \lVert u_{k,\sim j} \rVert_{L^\infty} \lesssim& t^{-1/2} \lVert Lu_k \rVert_{L^2}\\
    \end{split}\end{equation*}
    so we gain a factor of $2^{j/2}$ in this case.  Modifying the estimates in light of this, we obtain~\eqref{eqn:cubic-mean-bound-mean-0}.
\end{proof}

\section{Bounds for the self-similar term}\label{sec:self-sim}

As discussed in the introduction, we are interested in the self-similar solutions $S$ given by
\begin{equation*}
    S(x,t;p) = t^{-1/3}\sigma(t^{-1/3}x; p)
\end{equation*}
where $\sigma$ solves the third order ODE
\begin{equation}\label{eqn:cmkdv-self-sim-prof-expanded}
    \partial_y^3 \sigma(y;p) -  \frac{1}{3}y\partial_y \sigma(y;p) - \frac{1}{3}\sigma(y;p) = -|\sigma(y;p)|^2 \partial_y \sigma(y;p)
\end{equation}
subject to the (nonlocal) boundary condition $\hat{\sigma}(0;p) = p$.  Unlike in the real-valued case, it is not obvious that the nonlinearity in~\eqref{eqn:cmkdv-self-sim-prof-expanded} is a derivative, so it is not immediately clear that the solutions $\sigma$ we are interested in must solve the reduced order Painlev\'e equation~\eqref{eqn:phase-rot-Painleve-II}. Let us briefly give a heuristic justification for this reduction.  

Recall that we are interested in solutions to~\eqref{eqn:cmkdv} that decay at the same rate as a solution to the linear problem.  Since these solutions decay rapidly as $y \to \infty$, it is natural to assume that the nonlinearity can be treated perturbatively in this asymptotic regime.  Under this assumption, for large $y$ the solution to~\eqref{eqn:cmkdv-self-sim-prof-expanded} takes the form
\begin{equation*}
    \sigma(y;p) \sim c_1(p) \Ai(3^{-1/2}y) + c_2(p) \operatorname{Bi}(3^{-1/2}y) + c_3(p) \operatorname{Gi}(3^{-1/2}y)\qquad\qquad \text{as } y \to \infty
\end{equation*}
where $\operatorname{Ai}$ and $\operatorname{Bi}$ are the Airy functions of the first and second kind, $\operatorname{Gi}$ is Scorer's function~\cite{scorerNumericalEvaluationIntegrals1950}, and the $c_j(p)$'s are constants.  Since $\operatorname{Bi}(y) \to \infty$ as $y \to \infty$, we immediately conclude that $c_2(p) = 0$.  Since our functional framework requires that $Lu$ remain in $L^2$, we will also impose the condition $LS \in L^2$.  This is equivalent to the requirement that $(\partial_y^2 - \frac{1}{3}y)\sigma(y;p) \in L^2_y$, which is only satisfied if $c_3(p) = 0$.  Thus, we will focus on self-similar solutions to~\eqref{eqn:cmkdv} which are asymptotic to some multiple of $\Ai(3^{-1/2}y)$ at infinity.  This problem has received a great deal of study; see~\cite{deiftAsymptoticsPainleveII1995,hastingsBoundaryValueProblem1980,correiaAsymptoticsFourierSpace2020}.  In particular, from~\cite{correiaAsymptoticsFourierSpace2020} we know that $\sigma$ is the solution to the Painlev\'e II equation with the following boundary conditions:
\begin{equation}\label{eqn:phase-rot-Painleve-II}
    \left\{\begin{array}{c}
        \sigma''(y;p) = \frac{1}{3}y \sigma(y;p) - \frac{1}{3}|\sigma(y;p)|^2 \sigma(y;p)\\
        \hat{\sigma}(0;p) = p\\
        \hat{\sigma}(\eta;p) \text{ is continuous at } \eta=0
    \end{array}\right.
\end{equation}
In the notation of~\cite{correiaAsymptoticsFourierSpace2020}, this corresponds to taking $(c,\alpha) = (3^{-1/2}p,0)$ (to agree with~\cite[Equation (11)]{correiaAsymptoticsFourierSpace2020}, we must take $\sigma = \sqrt{3}V$, so $\hat{V}(0) = 3^{-1/2}p$ and there is no jump across $\eta = 0$).  We note that~\eqref{eqn:phase-rot-Painleve-II} is phase rotation invariant, so $\sigma(y;re^{i\theta}) = e^{i\theta} \sigma(y;r)$ where $r$ and $\theta$ are real.

\subsection{Bounds for \texorpdfstring{$S$}{S}}
Now, we will investigate what bounds on $S$ and $LS$ we can obtain when $\sigma$ satisfies~\eqref{eqn:phase-rot-Painleve-II}.  In this section, we will hold $p$ fixed and assume that it is small enough to apply the results of~\cite{correiaAsymptoticsFourierSpace2020}.  Let $\Phi = e^{-\partial_x^3} \sigma$.  Then, by~\cite[Theorem 1 and Remark 4]{correiaAsymptoticsFourierSpace2020}, the profile $\Phi$ can be written as
\begin{equation}\label{eqn:Phi-rep}
    \hat{\Phi}(\xi;p) = \chi(\xi) e^{ia\ln|\xi|} \left(A^\pm(p) + B^\pm(p) e^{2ia^\pm(p)\ln|\xi|} \frac{e^{-i \frac{8}{9}\xi^3}}{\xi^3} \right) + z(\xi; p) \qquad \text{for } \pm \xi \ge 0
\end{equation}
where $\chi$ is a cut-off function supported on $|\xi| \geq 1$,  $A^\pm, B^\pm,$ and $a^\pm$ are real-valued and have a Lipschitz dependence on $p$ (at least for $p$ sufficiently small), and $z$ is some function which has a Lipschitz dependence on $p$ with respect to the $H^1$ norm.\footnote{In fact, the Lipschitz dependence of $z$ can be proved in stronger spaces, but for us $H^1$ will be sufficient.}

\begin{rmk}
    Although we write~\eqref{eqn:Phi-rep} in the form given by~\cite[Remark 4]{correiaAsymptoticsFourierSpace2020} for simplicity, it is possible to obtain the representation~\eqref{eqn:Phi-rep} by using only Theorem 1 in~\cite{correiaAsymptoticsFourierSpace2020} by taking advantage of the phase rotation invariance of our equation.  Indeed, we see that $\sigma(y;p) = e^{i\theta} \sigma(y;|p|)$ satisfies~\eqref{eqn:phase-rot-Painleve-II} when $e^{i\theta}|p| = p$. The profile for $\sigma(y;|p|)$ is given in~\cite[Theorem 1]{correiaAsymptoticsFourierSpace2020} by
    \begin{equation*}
        \hat{\Phi}(\xi;|p|) = \chi(\xi) e^{ia\ln|\xi|} \left(A(|p|) + B(|p|) e^{2ia(|p|)\ln|\xi|} \frac{e^{-i \frac{8}{9}\xi^3}}{\xi^3} \right) + z(\xi; |p|) \qquad \text{for } \xi \ge 0
    \end{equation*}
    with a similar expression for $\xi < 0$ given by replacing $A$ and $B$ by $\overline{A}$ and $\overline{B}$. 
 Multiplying by $e^{i\theta}$, we obtain the representation~\eqref{eqn:Phi-rep} with
    \begin{align*}
        A^+(p) = e^{i\theta} A(|p|), \qquad & \qquad A^-(p) = e^{i\theta} \overline{A(|p|)}\\
        B^+(p) = e^{i\theta} B(|p|), \qquad & \qquad B^-(p) = e^{i\theta} \overline{B(|p|)}\\
        a^+(p) = a^-(p) = a(|p|), \qquad&\qquad z(\xi,p) = e^{i\theta} z(\xi,|p|)
    \end{align*}
    Furthermore, a geometric argument allows us to verify the Lipschitz dependence of the constants and function $z$ in~\eqref{eqn:Phi-rep}:  For any $p$ and $q$ are complex numbers with $p = |p|e^{i\theta}$ and $q = |q| e^{i\phi}$, we have that
    $$|A^+(p) - A^+(q)| \leq |1 - e^{i(\theta - \phi)}| \min\bigl(|A(|p|)|, |A(|q|)|\bigr) + |A(|p|) - A(|q|)|$$
    Since $A$ is Lipschitz, the second term is bounded by $\left||p| - |q|\right| \leq |p-q|$.  For the first term, we observe that
    $$|1 - e^{i(\theta - \phi)}|^2 = 2 (1-\cos(\theta - \phi))$$
    so, by the Law of Cosines,
    \begin{align*}
        |p-q|^2 =& |p|^2 + |q|^2 - 2|p||q| \cos(\theta - \phi)\\
        \geq& 2|p||q| (1 - \cos(\theta - \phi))\\
        \geq& \min(|p|,|q|)^2 |1 - e^{i(\theta -\phi)}|^2
    \end{align*}
    Since $A(0) = 0$, we see that $\min\left(|A(|p|)|, |A(|q|)|\right) \leq \min(|p|,|q|)$, so
    $$|1 - e^{i(\theta - \phi)}| \min\left(|A(|p|)|, |A(|q|)|\right) \lesssim |p-q|$$
    which completes the proof that $A^+$ is Lipschitz.  Similar arguments apply to the other constants and to the function $z$.
\end{rmk}

Using the representation~\eqref{eqn:Phi-rep} for $\Phi$, we see that 
$$h(x, t;p) := e^{t\partial_x^3} S(x,t;p) = t^{-1/3}\Phi(t^{-1/3} x;p)$$ satisfies 
$$\lVert h \rVert_{X} \lesssim |p|$$
Since $h \in X$ and $S = e^{-t\partial_x^3}h$, the pointwise behavior of $S$ is given by~\Cref{thm:lin-decay-lemma}.  For instance,~\eqref{eqn:lp-decay} gives us the bound
\begin{equation}\label{eqn:self-sim-lq-bd}
    \lVert S(x,t;p) \rVert_{L^q} \lesssim |p| t^{-\frac{1}{3} + \frac{1}{3q}}
\end{equation}
for all $q > 4$.  In~\Cref{sec:boot-consequences}, we will list refined bounds for solutions that are localized in space and/or frequency.

Turning to $LS$, we note that~\eqref{eqn:phase-rot-Painleve-II} implies the identity
\begin{equation}\label{eqn:self-similar-L-identity}
    LS = 3t|S|^2 S
\end{equation}
Using~\eqref{eqn:self-sim-lq-bd} with $q = 6$, we obtain the $L^2$ estimate 
\begin{equation*}
    \lVert LS \rVert_{L^2} \lesssim t \lVert S \rVert_{L^6}^3 \lesssim |p|^3 t^{1/6}
\end{equation*}
which improves on the trivial bound $\lVert LS \rVert_{L^2} \lesssim t^{1/6}\lVert h \rVert_X$ for $|p|$ small.  Similarly, by~\eqref{eqn:cmkdv-self-sim-prof-expanded}
\begin{equation}\label{eqn:self-similar-dx-L-identity}
    \partial_x LS = 3 |S|^2 \partial_x S
\end{equation}
Since~\eqref{eqn:self-similar-L-identity} and~\eqref{eqn:self-similar-dx-L-identity} allow us to express $LS$ and its derivative in terms of $S$, it is possible to obtain pointwise estimates for $LS$ and $\partial_x LS$.  We will defer this derivation until~\Cref{sec:boot-consequences}.

\subsection{Modulating in \texorpdfstring{$p$}{p}}
Recall that we are interested in self-similar solutions $S(x,t; p(t))$, where $p(t) =\hat{u}(0,t)$.  Since $\hat{u}(0,t)$ is not a conserved quantity of~\eqref{eqn:cmkdv}, we must understand how $S$ -- or, equivalently, $h$ -- changes with $p$.  As in the previous section, we will always assume that $p$ lies within some neighborhood $U$ of $0$ that is small enough that the results of~\cite{correiaAsymptoticsFourierSpace2020} apply.  To avoid cumbersome notation, we will write ``for almost every $p$'' or ``$L^\infty$ in $p$'' with the implicit understanding that we are restricting to $p \in U$.

The nonlinearity of~\eqref{eqn:phase-rot-Painleve-II} is not analytic, so we cannot expect $\hat{h}$ to be complex differentiable in $p$.  Moreover, the results of~\cite{correiaAsymptoticsFourierSpace2020} will only give us Lipschitz continuity in the parameter $p$.  However, writing $p = u + iv$, we can show that for $\hat{h}(\xi,t; u + iv)$ has a bounded derivative with respect to $u$ and $v$ for almost every $(u,v)$ in a neighborhood of the origin.  This amounts to identifying $\bbC$ with $\bbR^2$ and computing the derivative in the $\bbR^2 \to \bbC$ sense, which is sufficient for our purposes.  In particular, for $p: \bbR \to \bbC$ absolutely continuous, we have that
\begin{equation}\label{eqn:d-p-sigma-def}
    \partial_s \hat{h}(\xi,t;p(s)) = \partial_{\Re p} \hat{h} \Re p'(s) + i\partial_{\Im p} \hat{h} \Im p'(s) =: D_p \hat{h} p'(s)
\end{equation}
where we have abused notation slightly by interpreting $D_p \hat{h}$ as the derivative of a function from $\bbR^2$ to $\bbC$ and $p$ as a function from $\bbR$ to $\bbR^2$.

We now briefly comment on the nature of the equality in~\eqref{eqn:d-p-sigma-def}, and explain why it is sufficient for our purposes.  Since $h$ only has a Lipschitz dependence on $p$, the derivative $D_p \hat{h}$ is only defined for almost every $p$.  To handle this difficulty, we will use a chain rule for Lipschitz functions:
\begin{lemma}\label{lem:Lip-chain-rule}
    Suppose $f: \bbC \to \bbC$ is Lipschitz and $p: \Omega \to \bbC$ is absolutely continuous, where $\Omega \subset \bbR$.  Then, for almost every $s \in \Omega$, 
    \begin{equation}\label{eqn:Lip-chain-rule}\frac{d}{ds} f(p(s)) = D_p f(p(s)) p'(s)\end{equation}
    with the convention that the product is zero whenever $p'(s) = 0$.
\end{lemma}
\begin{proof}
    We will obtain this theorem as a consequence of the more general chain rule given in~\cite[Theorem 2.1]{ambrosioGeneralChainRule1990}.  Taking $u = p$, we see that $u$ is of bounded variation with no jumps, so the theorem of Ambrosio and Dal Maso implies that as measures,
    $$D(f(p(s))) = D_p \left(\left.f\right|_{T^p_s}\right)(p(s)) \frac{Dp}{|Dp|} |Dp|$$
    (where we have identified $\bbC$ with $\bbR^2$ in order to apply the theorem).  Noting that in our context $T^p_s = \bbR^2 \sim \bbC$ whenever $p$ is differentiable at $s$ and that $Dp = p'(s)\;ds$, we see that $\supp \left(D(f\circ p)\right) \subset \supp |Dp|$, and for $|Dp|$-a.e. $s$,
    $$D(f(p(s))) = D_p f(p(s)) p'(s)\;ds$$
    It follows that~\eqref{eqn:Lip-chain-rule} holds in an $L^1$ sense, and thus almost everywhere.
\end{proof}
\Cref{lem:Lip-chain-rule} tells us that if $p'$ is integrable, then $\partial_s \hat{h}(x,t;p(s))$ will also be integrable.  This allows us to apply modulation theory even though $\hat{h}$ is merely Lipschitz in $p$.

With these technical points in mind, let us compute $D_p \hat{h}(\xi,t;p)$.  To do so, we observe that $\hat{h}(\xi,t;p) = \hat{\Phi}(t^{1/3}\xi; p)$, and that $\hat{\Phi}$ is given by~\eqref{eqn:Phi-rep}.  Since $A^\pm, B^\pm, a^\pm$ and $z$ are all Lipschitz continuous in $p$, we find that
\begin{equation}\label{eqn:D-p-Phi-calc}\begin{split}
    D_p \hat{\Phi}(\xi; p) =& i D_p a^+ \chi(\xi) e^{ia\ln|\xi|
} \ln |\xi| \left(A^+ + 3 e^{2ia\ln|\xi|
}B^+ \frac{e^{-i\frac{8}{9}\xi^3}}{\xi^3}\right)\\
    &+ \chi(\xi) e^{ia\ln|\xi|
} \ln |\xi| \left(D_p A^+ + e^{2ia\ln|\xi|
} D_p B^+ \frac{e^{-i\frac{8}{9}\xi^3}}{\xi^3}\right)\\
    &+ D_p z(\xi)
\end{split}\end{equation}
for $\xi > 0$, with a similar expression involving $A^-$, $B^-$ and $a^-$ holding for $\xi < 0$.  Examining~\eqref{eqn:D-p-Phi-calc} and recalling that $D_p z \in H^1 \subset C^0$, we see that the dominant contribution comes from the first term in~\eqref{eqn:D-p-Phi-calc} when $|\xi| > 1$, giving us the pointwise bound
\begin{equation}\label{eqn:D-p-Phi}
    |D_p \hat{\Phi}(\xi;p)| \lesssim \ln(2 + |\xi|)
\end{equation}
Similarly, differentiating~\eqref{eqn:D-p-Phi-calc} in $\xi$, we see that all the terms other than $\partial_\xi D_p z$ decay like $\frac{\ln 2+|\xi|}{\jBra{\xi}}$ or better, while $\partial_\xi D_p z \in L^2$, so
\begin{equation}\label{eqn:D-p-dPhi}
    \lVert D_p \partial_\xi \hat{\Phi}(\xi;p) \rVert_{L^2} \lesssim 1
\end{equation}
Using these two estimates, we find that
\begin{equation*}\begin{split}
    \lVert \widehat{Q_j D_p h}(\xi,t;p) \rVert_{L^\infty_{\xi,p}} =& \lVert \widehat{Q_j D_p \Phi}(t^{1/3}\xi;p) \rVert_{L^\infty_{\xi,p}} \lesssim \ln(2 + t^{1/3}2^j)\\
    \lVert D_p\partial_\xi \hat{h}(\xi,t;p) \rVert_{L^\infty_{p}L^2_\xi} =& \lVert \partial_\xi D_p \hat{\Phi}(t^{1/3}\xi; p) \rVert_{L^\infty_p L^2_\xi} \lesssim t^{1/6}
\end{split}\end{equation*}
so
\begin{equation*}
    \lVert D_p h(\cdot;p) \rVert_{L^\infty_p X_j} \lesssim \ln(2 + t^{1/3} 2^j)
\end{equation*}
\begin{rmk}
    Note that by~\eqref{eqn:D-p-Phi-calc}, the estimates~\eqref{eqn:D-p-Phi} and~\eqref{eqn:D-p-dPhi} hold in $L^\infty_p$ (the derivatives of $A^\pm$, $B^\pm$, and $a^\pm$ exist for a.e. $p$ by Rademacher's theorem, while $D_p z \in H^1$ exists for a.e. $p$ because $H^1$ has the Radon-Nikodym property~\cite{benyaminiGeometricNonlinearFunctional2000}).  In particular, the above $L^\infty_p$ norms are well defined.
\end{rmk}

Since $D_pS = e^{t\partial_x} D_p h$,~\Cref{thm:lin-decay-lemma} gives us the pointwise decay estimate
\begin{equation*}\begin{split}
    |Q_{\sim j} D_p S | \lesssim& t^{-1/2} 2^{-j/2} \ln(2 + t^{-1/3} 2^j) \chi_{\sim j}\\
    &+ t^{-5/6}\ln(2 + t^{-1/3} 2^j)\left(2^j + 2^{-j/3} |\xi_0(x)|^{4/3}\right)^{-3/2}
\end{split}\end{equation*}
where $\xi_0 = \xi_0(x)$ is as in~\Cref{thm:lin-decay-lemma}.  It follows that
\begin{equation}\label{eqn:D-p-S-freq-loc-L-infty}
   \lVert Q_{\sim j} D_p S \rVert_{L^\infty} \lesssim t^{-1/2} 2^{-j/2} \ln(2 + t^{1/3} 2^j)
\end{equation}
and, for $4 < q < \infty$
\begin{equation}\label{eqn:D-p-S-L-p-freq-loc}
    \lVert Q_{\sim j}D_p S \rVert_{L^q} \lesssim t^{-\frac{1}{2} + \frac{1}{q}} 2^{\left(-\frac{1}{2} + \frac{2}{q}\right)j} \ln(2 + t^{1/3} 2^j)
\end{equation}
which can be summed to give
\begin{equation}\label{eqn:D-p-S-L-p}
    \lVert D_p S \rVert_{L^q} \lesssim t^{-\frac{1}{3} + \frac{1}{3q}}
\end{equation}
Finally, we observe that $L$ and $D_p$ commute, so using~\eqref{eqn:self-similar-L-identity}, we find that
\begin{equation}\label{eqn:param-deriv-LS}
    \lVert L D_p S \rVert_{L^2} = \lVert 3tD_p(|S|^2 S) \rVert_{L^2} \lesssim t \lVert S \rVert_{L^6}^2 \lVert D_p S \rVert_{L^6} \lesssim t^{1/6}|p|^2
\end{equation}

\section{Reduction of the main theorem}\label{sec:reduction}
\subsection{Reduction of the main theorem to profile estimates}

Let 
\begin{equation*}
    w(x,t) = u(x,t) - S(x,t; \hat{u}(0,t))
\end{equation*}
and define $g = e^{t\partial_x^3} w$.  The remainder of the paper will be devoted to proving the nonlinear bounds on $f$ and $g$ given in the following theorem:
\begin{theorem}\label{thm:nonlinear-bounds-thm}
    There exists an $\epsilon_0 > 0$ such that if $u_* \in H^2$ and $\lVert \hat{u}_* \rVert_{L^\infty} + \lVert x u_* \rVert_{L^2} \leq \epsilon \leq \epsilon_0$, then the solution $u$ to~\eqref{eqn:cmkdv-t-1} is global, and the following bounds hold for all $t \in [1,\infty)$
    \begin{equation}\label{eqn:desired-g-bound}
        \lVert xg(t) \rVert_{L^2} \lesssim \epsilon t^{1/6-\beta}
    \end{equation}
    \begin{equation}\label{eqn:desired-f-hat-bound}
        \lVert \hat{f}(t) \rVert_{L^\infty} \lesssim \epsilon
    \end{equation}
    where $\beta = \beta(\epsilon) = \frac{1}{6} - C \epsilon^2$ for some constant $C$.  Moreover,
    \begin{equation}\label{eqn:desired-zero-mode-conv}
        |\partial_t\hat{u}(0,t)| \lesssim \epsilon^3 t^{-1-\beta}
    \end{equation}
    and there exists a bounded function $f_\infty(\xi)$ such that \begin{equation}\label{eqn:phase-rot-dynamics-f-hat}\hat{f}(\xi,t) = \exp\left( -\frac{i}{6}\sgn \xi \int_1^t \frac{|\hat{f}(\xi,s)|^2}{s}\,ds\right)f_\infty(\xi) + O(\epsilon^3 (t^{-1/3} |\xi|)^{-1/14})\end{equation}
\end{theorem}

Assuming~\Cref{thm:nonlinear-bounds-thm}, we can prove~\Cref{thm:main-theorem}.
\begin{proof}[Proof of~\Cref{thm:main-theorem}]
    We first show that $f$ satisfies the hypotheses of~\Cref{thm:lin-decay-lemma}.  By~\eqref{eqn:desired-f-hat-bound}, $|\hat{u}(0,t)| = |\hat{f}(0,t)| \lesssim \epsilon$, so $\lVert LS \rVert_{L^2} \lesssim \epsilon^3t^{1/6}$ by~\eqref{eqn:self-similar-L-identity}.  Combining this with the bound for $xg$, we see that  
    \begin{equation*}
        \lVert xf \rVert_{L^2} \leq \lVert xg \rVert_{L^2} + \lVert LS \rVert_{L^2} \lesssim \epsilon t^{1/6}
    \end{equation*}
    Recalling the $\mathcal{F}L^\infty$ bound given in~\eqref{eqn:desired-f-hat-bound}, we see that $\lVert f \rVert_{X} \lesssim \epsilon$, which is enough to give the asymptotics~\eqref{eqn:positive-x-asymp} for $x > t^{1/3}$ using~\Cref{thm:lin-decay-lemma}.  By using the more precise expression for $\hat{f}$ given in~\eqref{eqn:phase-rot-dynamics-f-hat}, we see that $u$ has the modified scattering asymptotics given by~\eqref{eqn:negative-x-asymp} in the region $x < -t^{-1/3}$.
    
    It only remains to verify that the asymptotics for $|x| \lesssim t^{1/3 + 4\beta}$ are given by~\eqref{eqn:small-x-asymptotics}.  By~\Cref{thm:w-lin-decay} and the hypothesis~\eqref{eqn:desired-g-bound}, $\lVert w \rVert_{L^\infty} \lesssim \epsilon t^{-1/3 -\beta}$, so in the region $|x| \lesssim t^{1/3 +4\beta}$
    \begin{equation}\label{eqn:u-S-error}
        u(x,t) = S(x,t; \hat{u}(0,t)) + O(\epsilon t^{-1/3 -\beta})
    \end{equation}
    Moreover, since $t^{-1-\beta}$ is integrable over $[1,\infty)$,~\eqref{eqn:desired-zero-mode-conv} implies that $\alpha = \lim_{t\to\infty} \hat{u}(0,t)$ has size $O(\epsilon)$ and satisfies $|\hat{u}(0,t) - \alpha| = O(\epsilon^3 t^{-\beta})$.
    Using~\eqref{eqn:D-p-S-freq-loc-L-infty} to bound the terms $\lVert Q_j D_p S \rVert_{L^\infty}$, we find that
    \begin{equation*}\begin{split}
        \lVert S(x,t; \hat{u}(0,t)) - S(x,t;\alpha)\rVert_{L^\infty} \lesssim& \sum_{2^j \gtrsim t^{-1/3}} \lVert Q_{j} D_p S \rVert_{L^\infty} |\hat{u}(0,t) - \alpha|\\
        \lesssim& \epsilon^3 t^{-1/3-\beta}
    \end{split}\end{equation*}
    which, combined with~\eqref{eqn:u-S-error}, gives the self-similar asymptotics~\eqref{eqn:small-x-asymptotics}.
\end{proof}

\subsection{Plan of the proof of~\texorpdfstring{\Cref{thm:nonlinear-bounds-thm}}{Theorem 9}}

We will use a bootstrap argument to prove~\Cref{thm:nonlinear-bounds-thm}.  We begin with some qualitative observations about the local wellposedness of~\eqref{eqn:cmkdv-t-1}.    By~\cite{katoCauchyProblemGeneralized1983},~\eqref{eqn:cmkdv-t-1} has a local solution on $[1,1+\delta]$ for some $\delta > 0$ such that $u(t)$ is continuous in $H^2_x$ and $xu(t)$ is continuous in $L^2_x$.  It follows that $f(t)$ is continuous in $X$ over $[1, 1+\delta]$.  \Cref{thm:cubic-mean-thm} then implies that
\begin{equation*}
    \partial_t \hat{u}(0,t) = -\frac{1}{\sqrt{2\pi}} \int |u|^2 \partial_x u \,dx \in C(1, 1+\delta)
\end{equation*}
with
\begin{equation*}
    |\partial_t \hat{u}(0,0)| = \frac{1}{\sqrt{2\pi}} \left|\int|u_*|^2\partial_x u_*\,dx\right| \lesssim \epsilon^3
\end{equation*}
Thus, for $T$ sufficiently close to $1$, the following bootstrap hypotheses hold:
\begin{equation}\label{eqn:bootstrap-hypotheses}\tag{BH}
    \sup_{1 \leq t \leq T}\left(\lVert \hat f(t) \rVert_{L^\infty} + t^{-1/6}\lVert x f(t) \rVert_{L^2}\right) \leq M \epsilon, \qquad \sup_{1 \leq t \leq T}|\partial_t \hat{u}(0,t)| \leq M^3\epsilon^3 t^{-1-\beta}
\end{equation} 
where $M \gg 1$ is a large constant independent of $u_*$, the value of which we will specify later.  Fix an $\epsilon_0 \ll M^{-3/2}$ independent of $u_*$, and suppose that $\epsilon \leq \epsilon_0$.  Using~\eqref{eqn:bootstrap-hypotheses}, we prove in~\Cref{sec:weighted-L2} that $\lVert xg \rVert_{L^2} \leq C\epsilon t^{1/6-\beta}$ for some $C$ independent of $M$.  Then, by using this bound on $xg$ in addition to the bootstrap hypotheses, we verify that $| \partial_t \hat{u}(0,t)| \leq CM^2\epsilon^3 t^{-1-\beta}$ and that $\lVert \hat{f}(t) \rVert_{L^\infty} \leq C \epsilon$ in~\Cref{sec:L-infty-est}, where again the constants $C$ do not depend on $M$.  These results imply the improved bounds
\begin{equation}\label{eqn:bootstrap-hypos-improved}
    \sup_{1 \leq t \leq T}\left(\lVert \hat f(t) \rVert_{L^\infty} + t^{-1/6}\lVert x f(t) \rVert_{L^2}\right) \leq C \epsilon, \qquad \sup_{1 \leq t \leq T}|\partial_t \hat{u}(0,t)| \leq CM^2\epsilon^3 t^{-1-\beta} \tag{BH+}
\end{equation}
In particular, since we are free to choose $M$, we may choose $M > C$, so that~\eqref{eqn:bootstrap-hypos-improved} are stronger than the original bootstrap hypotheses in~\eqref{eqn:bootstrap-hypotheses}.  Moreover, a simple energy estimate shows that
\begin{equation*}\begin{split}
    \frac{d}{dt} \lVert \partial_x^2 u \rVert_{L^2}^2 \lesssim& \lVert u \partial_x u \rVert_{L^\infty}  \lVert \partial_x^2 u \rVert_{L^2}^2 \lesssim M^2\epsilon^2 t^{-1} \lVert \partial_x^2 u \rVert_{L^2}^2
\end{split}\end{equation*}
so, by Gr\"onwall's inequality, $\lVert \partial_x^2 u \rVert_{L^2}$ grows at most polynomially in time.  Using the $L^\infty$ bound on $\hat{f}$ to control the low frequencies, we see that $\lVert u \rVert_{H^2}$ does not blow up at time $T$.  Since the results of~\cite{katoCauchyProblemGeneralized1983} imply that the solution $u$ can be continued until $\lVert xu \rVert_{L^2} + \lVert u \rVert_{H^2}$ blows up, we can extend to a solution to a longer time interval $[1, T']$ such that the bootstrap bounds~\eqref{eqn:bootstrap-hypotheses} hold up to time $T'$.  By a standard continuity argument, this shows that the estimates~\eqref{eqn:bootstrap-hypos-improved} hold for all time.  Moreover, in the course of proving the $\mathcal{F}L^\infty$ bound in~\Cref{sec:L-infty-est}, we obtain~\eqref{eqn:phase-rot-dynamics-f-hat}, which proves~\Cref{thm:nonlinear-bounds-thm} (and hence~\Cref{thm:main-theorem}, as well).

\subsection{Basic consequences of the bootstrap estimates\label{sec:boot-consequences}}

We close this section by listing some basic estimates that follow from the bootstrap assumptions~\eqref{eqn:bootstrap-hypotheses} and the material in~\Cref{sec:linear-ests}.  

We begin by discussing the estimates for $u$.  By the first bootstrap assumption, for $t \in [1, T]$,
\begin{equation*}
    \lVert f(t) \rVert_X \lesssim M\epsilon
\end{equation*}
Thus,~\Cref{thm:lin-decay-lemma} gives us the following bounds:
\begin{equation}\label{eqn:u-lin-ests}\begin{split}
    \lVert u_j \rVert_{L^\infty} \lesssim& M\epsilon t^{-1/2}2^{-j/2}\\
    \lVert \chi_{\geq k} u \rVert_{L^\infty} \lesssim& M\epsilon t^{-1/2} 2^{-k/2}\\
    \lVert \chi_{\geq j} u_{\ll j} \rVert_{L^\infty} \lesssim& M\epsilon t^{-5/6} 2^{-3/2 j}\\
    \lVert (1 - \chi_{\sim j}) u_j \rVert_{L^\infty} \lesssim& M\epsilon t^{-5/6} 2^{-3/2 j}\\
    \lVert u_{j} \rVert_{L^4} + \lVert \chi_{j} u\rVert_{L^4} \lesssim& M \epsilon s^{-1/4}\\
    \lVert (1 - \chi_{\sim j}) u_j \rVert_{L^4} \lesssim& M \epsilon s^{-7/12} 2^{-j}\\
    \lVert \chi_{> j} u_{\ll j} \rVert_{L^4} \lesssim& M\epsilon s^{-7/12} 2^{-j}
\end{split}\end{equation}

We now consider the bounds for the self-similar solution $S = S(x,t;\hat{u}(0,t))$.  From the bootstrap assumptions and the fact that $\epsilon \ll M^{-3/2}$, we find that 
\begin{equation*}
    \sup_{1 \leq t \leq T} |\hat{u}(0,t)| \leq \epsilon + \int_1^\infty M^3\epsilon^3 s^{-1-\beta} \,ds    \lesssim \epsilon
\end{equation*}
It follows from the work in~\Cref{sec:self-sim} that $S$ obeys all the estimates in~\eqref{eqn:u-lin-ests} but without the bootstrap factor $M$.  In addition, by combining the identity~\eqref{eqn:self-similar-L-identity} with the cubic estimates from~\Cref{sec:cubic-bounds}, we find that
\begin{equation}\label{eqn:LS-cubic-bounds}\begin{split}
    \lVert (LS)_{\sim j} \rVert_{L^\infty} \lesssim& \epsilon^3 t^{-1/2} 2^{-j/2}\\
    \lVert \chi_{k} (LS)_{\sim j} \rVert_{L^\infty} \lesssim& \epsilon^3 t^{-5/6}2^{-3/2 j} 2^{-k} \qquad k < j - 30\\
    \lVert \chi_{k} (LS)_{\ll j} \rVert_{L^\infty} \lesssim& \epsilon^3 t^{-1/2} 2^{-3/2 k}\\
    \lVert \chi_{\geq k} \partial_x (LS)_{\ll j} \rVert_{L^\infty} \lesssim& \epsilon^3 t^{-1/2} 2^{-3/2 k}\\
    \lVert \partial_x (LS)_{\sim j} \rVert_{L^\infty} \lesssim& \epsilon^3 t^{-1/2} 2^{-j/2}\\
    \lVert \partial_x LS \rVert_{L^\infty} \lesssim& \epsilon^3 t^{-1/3}
\end{split}\end{equation}
All but the last equation are straightforward consequences of the cubic bounds \cref{eqn:cubic-sim-larger,eqn:cubic-bounds-compendium,eqn:cubic-deriv-bounds-compendium,eqn:cubic-sim-with-k,eqn:cubic-sim-w-deriv,eqn:cubic-sim-w-deriv-small}, and the last inequality follows from the second-to-last after summing in $j$.

Finally, we turn to the linear estimates for $w$.  By~\Cref{thm:w-lin-decay}, these estimates can be given in terms of $\lVert xg \rVert_{L^2}$:
\begin{equation}\label{eqn:w-lin-ests-L-inf}\begin{split}
    \lVert w_j \rVert_{L^\infty} \lesssim& t^{-1/2} \lVert xg \rVert_{L^2} c_j\\
    \lVert \chi_k w \rVert_{L^\infty} \lesssim& t^{-1/2} \lVert xg \rVert_{L^2} c_k\\
    \lVert \chi_k \partial_x w \rVert_{L^\infty} \lesssim& t^{-1/2} 2^k \lVert xg \rVert_{L^2} c_k\\
    \lVert \chi_k \partial_x w_{j} \rVert_{L^\infty} \lesssim& t^{-5/6} \lVert xg \rVert_{L^2} c_j \qquad\qquad k < j - 30
\end{split}
\end{equation}
The first two equations are simple restatements of~\Cref{thm:w-lin-decay}, while the last two follow from applying~\Cref{thm:w-lin-decay} to $\partial_x g$.  We will also often make use of $L^2$ estimates for $w$, so we record some below for future reference:
\begin{equation}\label{eqn:w-lin-ests-L-2}\begin{split}
    \lVert w_{\leq j} \rVert_{L^2} \lesssim& 2^j \lVert xg \rVert_{L^2}\\
    \lVert w_{j} \rVert_{L^2} \lesssim& 2^j \lVert xg \rVert_{L^2}c_j\\
    \lVert \partial_x w_{\leq j} \rVert_{L^2} \lesssim& 2^{2j} \lVert xg \rVert_{L^2}\\
    \lVert \partial_x w_{j} \rVert_{L^2} \lesssim& 2^{2j} \lVert xg \rVert_{L^2} c_j\\
    \lVert \chi_{k} w \rVert_{L^2} \lesssim& 2^k \lVert xg \rVert_{L^2} c_k\\
    \lVert \chi_k \partial_x w \rVert_{L^2} \lesssim& 2^{2k} \lVert xg \rVert_{L^2} c_k\\
    \lVert \chi_k \partial_x w_{j} \rVert_{L^2} \lesssim& t^{-1/3}2^{k} \lVert xg \rVert_{L^2}c_j \qquad\qquad k < j - 30
\end{split}
\end{equation}
The first four estimates follow from Hardy's inequality, and the other inequalities are direct consequences of the $L^\infty$ estimates~\eqref{eqn:w-lin-ests-L-inf} and the estimate $\lVert \chi_{\sim k} \rVert_{L^2} \lesssim t^{1/2} 2^k$.

\begin{rmk}\label{rmk:proj-bdds-rmk} Observe that all of the above bounds are based estimates on the linear propagator.  Since the linear propagator is well-behaved under the Littlewood-Paley projectors, all of the above estimates continue to hold if we replace $u$, $S$, or $w$ with their Littlewood-Paley projections. For instance, we get the bound $\lVert \chi_k u_{\lesssim j} \rVert_{L^\infty} \lesssim M\epsilon t^{-1/2} 2^{-k/2}$ from the second bound in~\eqref{eqn:u-lin-ests}.
\end{rmk}

\section{The weighted energy estimate}\label{sec:weighted-L2}

In this section, we will show that $\lVert xg \rVert_{L^2} \lesssim \epsilon t^{1/6 - \beta}$, where $\beta$ is as in~\Cref{thm:nonlinear-bounds-thm}.  To establish this bound, we show that the bootstrap hypotheses imply
\begin{equation}\label{eqn:xg-desired-bound}
    \lVert xg(t) \rVert_{L^2}^2  \lesssim \epsilon^2  + \int_1^t\left[  M^2\epsilon^2s^{-1}\lVert xg(s) \rVert_{L^2}^2 + M^2\epsilon^3 s^{-5/6-\beta} \lVert xg(s) \rVert_{L^2}\right]\,ds
\end{equation}
for all $t \leq T$.  By adding a factor of $\epsilon^2 t^{1/3 - 2\beta}$,~\eqref{eqn:xg-desired-bound} implies that
\begin{equation*}
    \lVert xg(t) \rVert_{L^2}^2 + \epsilon^2 t^{1/3 -2\beta} \lesssim \epsilon^2 + \int_1^t M^2\epsilon^2 s^{-1} \left(\lVert xg(s) \rVert_{L^2}^2 + \epsilon^2 s^{1/3 -2\beta} \right)\,ds
\end{equation*}
(recall that by the definition of $\beta$, $\frac{1}{6} - \beta = O(M^2\epsilon^2)$).  Applying Gr\"onwall's inequality, we obtain the desired bound for $\lVert xg(t) \rVert_{L^2}$.  To prove~\eqref{eqn:xg-desired-bound}, we write the inequality in differential form using the expansion
\begin{equation}\label{eqn:xg-division}
    \begin{split}
        x \partial_t g  =&  -x e^{t\partial_x^3} \left(|u|^2 \partial_x w +  (w \overline{u} + u \overline{w}) \partial_x S\right) +  xe^{t\partial_x^3}D_p S \partial_t \hat u(0,t)\\
        =& \mathcal{F}^{-1} \frac{1}{2\pi}\partial_\xi \int e^{it\phi} (\xi - \eta - \sigma) \left(\hat{f}(\eta) \overline{\hat{f}(-\sigma)} \hat{g}(\xi-\eta-\sigma)\right.\\
        &\qquad\qquad\qquad + \left.\left(\hat{f}(\eta) \overline{\hat{g}(-\sigma)}+ \hat{g}(\eta) \overline{\hat{f}(-\sigma)}\right) \hat{h}(\xi-\eta-\sigma)\right)\,d\eta d\sigma\\
        & +  e^{t\partial_x}LD_p S \partial_t \hat u(0,t)\\
        = & it\left(T_{\partial_\xi \phi e^{it\phi}} (f,\partial_x g, \overline{f}) + T_{\partial_\xi \phi e^{it\phi}} (f,\partial_x h, \overline{g}) + T_{\partial_\xi \phi e^{it\phi}} (g,\partial_x h, \overline{f})\right)\\
        & + e^{t\partial_x^3} \left(|u|^2 w + (u\overline{w} + \overline{u}w) S\right) + e^{t\partial_x^3} \left( |u|^2 \partial_x Lw + (u\overline{w} + \overline{w}u) \partial_x LS \right)\\
        & +  e^{t\partial_x}LD_p S \partial_t \hat u(0,t)
    \end{split}
\end{equation}
Thus,
\begin{subequations}
\begin{align}
    \begin{split}\frac{\partial_t \lVert xg \rVert_{L^2}^2}{2} =&  -t\Im \bigl\langle xg, T_{\partial_\xi \phi e^{it\phi}} (f,\partial_x g, \overline{f}) + T_{\partial_\xi \phi e^{it\phi}} (f,\partial_x h, \overline{g})\\
    &\qquad+ T_{\partial_\xi \phi e^{it\phi}} (g,\partial_x h, \overline{f}) \bigr\rangle\end{split}\label{eqn:xf-energy-estimate-paraprods}\\
    & +  \Re \langle Lw, |u|^2 w + (u\overline{w} + \overline{u}w) S \rangle\label{eqn:xf-energy-estimate-no-deriv}\\
    &+ \Re \langle Lw, |u|^2 \partial_x Lw \rangle\label{eqn:xf-energy-estimate-cancellation}\\
    &+ \Re \langle Lw, (u\overline{w} + \overline{w}u) \partial_x LS) \rangle \label{eqn:xf-energy-estimate-S-ident}\\
    & + \Re\langle Lw,  LD_p S \partial_t \hat u(0,t) \rangle \label{eqn:xf-energy-estimate-modulation-term}
\end{align}
\end{subequations}
Since
\begin{equation}\label{eqn:xg-t-1-bd}
    \lVert xg(x,1) \rVert_{L^2} \leq \lVert x u_* \rVert_{L^2} + \lVert LS(x,1; \hat{u}_*(0)) \rVert_{L^2} \lesssim \epsilon
\end{equation}
the desired inequality for $\lVert xg \rVert_{L^2}^2$ will follow if we can show that~\cref{eqn:xf-energy-estimate-cancellation,eqn:xf-energy-estimate-modulation-term,eqn:xf-energy-estimate-no-deriv,eqn:xf-energy-estimate-paraprods,eqn:xf-energy-estimate-S-ident} satisfy bounds compatible with~\eqref{eqn:xg-desired-bound}.  We will first show that the terms~\cref{eqn:xf-energy-estimate-no-deriv,eqn:xf-energy-estimate-cancellation,eqn:xf-energy-estimate-S-ident} decay in time like $M^2\epsilon^2 t^{-1} \lVert xg \rVert_{L^2}^2$.  For~\eqref{eqn:xf-energy-estimate-no-deriv}, we use the bounds from~\eqref{eqn:u-lin-ests} and~\eqref{eqn:w-lin-ests-L-2} together with almost orthogonality in space to find that
\begin{equation*}\begin{split}
    \left|\eqref{eqn:xf-energy-estimate-no-deriv}\right| \lesssim& \lVert xg \rVert_{L^2} \left(\sum_{2^k \gtrsim t^{-1/3}} \lVert \chi_k(|u|^2 w)\rVert_{L^2}^2\right)^{1/2} + \{\text{similar terms}\}\\
    \lesssim& M^2 \epsilon^2 t^{-1}\lVert xg \rVert_{L^2}^2 
\end{split}\end{equation*}
For~\eqref{eqn:xf-energy-estimate-cancellation},  we integrate by parts and use~\Cref{thm:simple-bilinear-decay} to find that
\begin{equation*}
    \begin{split}
        |\eqref{eqn:xf-energy-estimate-cancellation}| =& \left|\int \Re (e^{-t\partial_x^3} xg \partial_x \overline{e^{-t\partial_x^3} xg}) |u|^2\,dx\right|\\
        =& \frac{1}{2} \left|\int \partial_x(|u|^2) |e^{s\partial_x^3} xg|^2\,dx\right|\\
        \lesssim& M^2\epsilon^2 t^{-1} \lVert xg \rVert_{L^2}^2
    \end{split}
\end{equation*}  
Finally, for~\eqref{eqn:xf-energy-estimate-S-ident}, we use equation~\eqref{eqn:self-similar-dx-L-identity} to write $\partial_x LS = - 3t|S|^2\partial_x S$.  Then, since $h \in X$, we can use the bilinear bound given in~\Cref{thm:simple-bilinear-decay} and an argument similar to the one for~\eqref{eqn:xf-energy-estimate-no-deriv} to obtain the bound
\begin{equation*}\begin{split}
    \left|\eqref{eqn:xf-energy-estimate-S-ident}\right| \lesssim& t\lVert xg \rVert_{L^2} \lVert uw|S|^2 \partial_x S \rVert_{L^2}\\
    \lesssim& \epsilon^2 \lVert xg \rVert_{L^2} \left(\sum_{2^k \gtrsim t^{-1/3}} \lVert \chi_k(uwS) \rVert_{L^2}^2 \right)^{1/2}\\
    \lesssim& \epsilon^2 \lVert xg \rVert_{L^2} \left(\sum_{2^k \gtrsim t^{-1/3}} \left(\lVert \chi_{\sim k} u \rVert_{L^\infty} \lVert \chi_{\sim k} S \rVert_{L^\infty} \lVert \chi_{\sim k} w \rVert_{L^2}\right)^2 \right)^{1/2}\\
    \lesssim& M\epsilon^4 t^{-1} \lVert xg \rVert_{L^2}^2
\end{split}\end{equation*}
We now turn to the term~\eqref{eqn:xf-energy-estimate-modulation-term}.  By~\eqref{eqn:bootstrap-hypotheses} and~\eqref{eqn:param-deriv-LS}
\begin{equation}\label{eqn:L-D-p-S-hat-u-bd}
    \lVert LD_pS \hat{u}(0,t) \rVert_{L^2} \lesssim M^3\epsilon^5 t^{-5/6-\beta}
\end{equation}
from which it easily follows that
\begin{equation*}
    |\eqref{eqn:xf-energy-estimate-modulation-term}| \lesssim M^5\epsilon^5 t^{-5/6-\beta} \lVert xg \rVert_{L^2}
\end{equation*}
which is compatible with~\eqref{eqn:xg-desired-bound} since $\epsilon \ll M^{-3/2}$.

It only remains to control~\cref{eqn:xf-energy-estimate-paraprods}.  We will rewrite this term to exploit the space-time resonance structure of the phase $\phi$.  Recall that the space-time resonances are
\begin{align*}
    \cS &= \{\eta = \sigma = \xi / 3\}\\
    \cT &= \{\xi = \eta\} \cup \{\xi = \sigma\}\\
    \cR &= \{(0,0,0)\}
\end{align*}
Let $\chi^\cS, \chi^\cT, \chi^\cR$ be a smooth partition of unity such that:
\begin{enumerate}[(i)]
    \item $\chi^\cS$ and $\chi^\cT$ are supported away from the sets $\cS$ and $\cT$, respectively,
    \item $\chi^\cS$ and $\chi^\cT$ are $0$-homogeneous outside a ball of radius $2$ and vanish within a ball of radius $1$,
    \item $\chi^\cR$ is supported inside the ball of radius $2$, and 
    \item the support of $\chi^{\cS}$ is contained in the set
    \begin{equation*}
        \widetilde{\cT} = \left\{(\xi,\eta,\sigma) : \frac{\xi}{\eta} \in \left[1-c, 1+c\right] \text{ or } \frac{\xi}{\sigma} \in \left[1-c, 1+c\right] \right\}
    \end{equation*}
    where $c \ll 1$ is a small constant and $\chi^\cT$ is supported away from $\cT$.  (Note that $\widetilde{\cT} \cap \cS$ is empty for small $c$, so this is possible).
\end{enumerate}
Define $\chi^\bullet_t = \chi^\bullet(t^{1/3} \cdot)$ for $\bullet = \cS, \cT, \cR$.  If we write
\begin{equation}\label{eqn:basic-STR-division}
    T_{\partial_\xi \phi e^{is\phi}} = T_{\partial_\xi \phi e^{is\phi} \chi^\cT_s} + T_{\partial_\xi \phi e^{is\phi} \chi^\cS_s} + T_{\partial_\xi \phi e^{is\phi} \chi^\cR_s}
\end{equation}
then we can naturally write~\eqref{eqn:xf-energy-estimate-paraprods} as
\begin{subequations}\begin{align}
    \eqref{eqn:xf-energy-estimate-paraprods} =&  \cramped{-t\Im\langle xg, T_{\partial_\xi \phi e^{it\phi} \chi^\cR_t}(f, \partial_x g, \overline{f}) + T_{\partial_\xi \phi e^{it\phi} \chi^\cR_t}(f, \partial_x h, \overline{g}) + T_{\partial_\xi \phi e^{it\phi} \chi^\cR_t}(g, \partial_x h, \overline{f}) \rangle} \label{eqn:xg-str-term}\\
    &\cramped{- t\Im\langle xg, T_{\partial_\xi \phi e^{it\phi} \chi^\cS_t}(f, \partial_x g, \overline{f}) + T_{\partial_\xi \phi e^{it\phi} \chi^\cS_t}(f, \partial_x h, \overline{g}) + T_{\partial_\xi \phi e^{it\phi} \chi^\cS_t}(g, \partial_x h, \overline{f}) \rangle} \label{eqn:xg-space-non-res-term}\\
    &\cramped{- t\Im\langle xg, T_{\partial_\xi \phi e^{it\phi} \chi^\cT_t}(f, \partial_x g, \overline{f}) + T_{\partial_\xi \phi e^{it\phi} \chi^\cT_t}(f, \partial_x h, \overline{g}) + T_{\partial_\xi \phi e^{it\phi} \chi^\cT_t}(g, \partial_x h, \overline{f}) \rangle} \label{eqn:xg-time-non-res-term}
\end{align}\end{subequations}
In the following sections, we will show that the space-time resonant and space non-resonant terms satisfy the estimate 
\begin{equation*}
    |\eqref{eqn:xg-str-term}|, |\eqref{eqn:xg-space-non-res-term}| \lesssim M^2\epsilon^2 \lVert xg \rVert_{L^2}^2 t^{-1}\end{equation*}
pointwise in time, and that
\begin{equation*}
    \left|\int_1^t \eqref{eqn:xg-time-non-res-term} \,ds\right| \lesssim \epsilon^2 + M^2\epsilon^2\lVert xg \rVert_{L^2}^2 + \int_1^t\left[M^2\epsilon^2s^{-1}\lVert xg(s) \rVert_{L^2}^2 + M^2\epsilon^3 t^{-5/6-\beta} \lVert xg \rVert_{L^2}\right]ds
\end{equation*}
which is enough to prove~\eqref{eqn:xg-desired-bound}.

\subsection{The space-time resonant multiplier}\label{sec:xf-str}
Since the space-time resonant set is a single point, we can control its contribution using the $L^\infty$ bounds on $\hat{g},$ $\hat{f}$ and $\hat{S}$.  Define
\begin{equation*}
    m^{\cR}_t = i(\xi - \eta - \sigma)\chi^\cR_s \partial_\xi \phi e^{it\phi}
\end{equation*}
Then, $m^\cR_t$ is of size $O(t^{-1})$ and is supported within the region $|\xi| + |\eta| + |\sigma| \lesssim t^{-1/3}$, so
\begin{equation*}\begin{split}
    \left|\eqref{eqn:xg-str-term} \right| 
    &\leq t\lVert xg \rVert_{L^2}\left\lVert \int_{\bbR^2} m^\cR_t\hat g(\xi-\eta-\sigma)\hat f(\eta)\overline{\hat f(-\sigma)} \,d\eta d\sigma \right\rVert_{L^2_\xi} + \bsim\\
    &\lesssim M^2\epsilon^2t^{-1}\lVert xg \rVert_{L^2}^2
\end{split}\end{equation*}
as required.

\subsection{The space non-resonant multiplier}\label{sec:xf-sr}

We will handle the terms supported away from the space resonant set in frequency space using integration by parts in $\eta$ and $\sigma$.  To simplify notation, let us write
\begin{equation*}
    \mathcal{N}^\cS(f,g,h) = -t\Bigl(T_{\partial_\xi \phi e^{it\phi} \chi^\cS_t}(f, \partial_x g, \overline{f}) + T_{\partial_\xi \phi e^{it\phi} \chi^\cS_t}(f, \partial_x h, \overline{g}) + T_{\partial_\xi \phi e^{it\phi} \chi^\cS_t}(g, \partial_x h, \overline{f})\Bigr)
\end{equation*}
Then, the desired bound for~\eqref{eqn:xg-space-non-res-term} follows from showing that 
\begin{equation}\label{eqn:N-s-desired-bound}
    \lVert \mathcal{N}^\cS(f,g,h) \rVert_{L^2} \lesssim M^2\epsilon^2 t^{-1} \lVert xg \rVert_{L^2}
\end{equation}
Let us first consider the term $-tT_{\partial_\xi \phi e^{it\phi} \chi^\cS_t}(f,\partial_x g,\overline{f})$.  Integrating by parts in frequency gives
\begin{align}
    t\hat{T}_{\partial_\xi \phi e^{it\phi} \chi^\cS_t}(f,\partial_x g,\overline{f}) =& -i\int t\partial_\xi \phi \chi^\cS_t e^{it\phi} (\xi - \eta -\sigma) \hat{g}(\xi-\eta-\sigma) \hat{f}(\eta)  \overline{\hat{f}(-\sigma)} \,d\eta d\sigma\notag\\
    \begin{split}
    =& e^{-it\xi^3} \hat{T}_{\nablaes \cdot m^\cS_t(\xi - \eta - \sigma)}(u,w, \overline{u})\\
    &+ e^{-it\xi^3} \hat{T}_{m^{\cS,\sigma}_t}(u, \partial_x w, \overline{Lu})\\
    &+ e^{-it\xi^3} \hat{T}_{m^{\cS,\sigma}_t}(u,w, \overline{u})\\
    &- e^{-it\xi^3} \hat{T}_{m^{\cS,\sigma}_t}(u, \partial_x Lw, \overline{u})\\
    &+ \{\text{similar terms}\}
\end{split}\label{eqn:space-pseudoproduct-division}\end{align}
where $m^\cS_s$ is the vector-valued symbol $m^cS_s = \frac{\partial_\xi \phi}{|\nablaes\phi|^2} \nablaes \phi \chi^\cS_s$, and $m^{\cS,\eta}_s,m^{\cS,\sigma}_s$ are its components.  Because $Lu$ obeys worse estimates than $LS$ and $Lw$, we rewrite the term containing the $Lu$ factor using
\begin{equation*}
    {T}_{m^{\cS,\sigma}_t}(u, \partial_x w, \overline{Lu}) = {T}_{m^{\cS,\sigma}_t}(u, \partial_x w, \overline{LS}) +  {T}_{m^{\cS,\sigma}_t}(u, \partial_x u, \overline{Lw}) - {T}_{m^{\cS,\sigma}_t}(u, \partial_x S, \overline{Lw})
\end{equation*}
Similar expressions hold for the other terms in~$\mathcal{N}^\cS$.  Note that each symbol $m$ occurring after the last equality in~\eqref{eqn:space-pseudoproduct-division} satisfies Coifman-Meyer bounds
\begin{equation*}
    \left|(|\xi| + |\eta| + |\sigma|)^{|\alpha|}\partial_{\xi,\eta,\sigma}^\alpha m\right| \lesssim_\alpha 1
\end{equation*}
and is supported on  $\{\xi \sim \eta \gtrsim t^{-1/3}\} \cup \{\xi \sim \sigma \gtrsim t^{-1/3}\}$.  Thus, by dividing dyadically in frequency, we can write
\begin{subequations}\begin{align}
    \mathcal{N}^\cS(f,g,h) =& \sum_{2^j \gtrsim t^{-1/3}} T_{m_j^{\cS}}( u,  w, \overline{u})\label{eqn:mS-dyadic-no-deriv}\\
    &\qquad+ T_{m_j^{\cS}}( u, \partial_x Lw, \overline{u})\label{eqn:mS-dyadic-Lw-deriv}\\
    &\qquad+ T_{m_j^{\cS}}(u,\partial_x w, \overline{LS})\label{eqn:mS-dyadic-LS}\\
    &\qquad + T_{m_j^\cS}(u, S, \overline{w}) \label{eqn:mS-dyadic-SSw}\\
    &\qquad + T_{m_j^{\cS}}(u, \partial_x L S, \overline{w})\label{eqn:mS-dyadic-deriv-on-LS}\\
    &\qquad + T_{m_j^{\cS}}(u, \partial_x u, \overline{Lw})\label{eqn:mS-dyadic-deriv-on-S-and-Lw}\\
    &\qquad + T_{m_j^{\cS}}(LS, \partial_x u, \overline{w})\label{eqn:mS-dyadic-LS-and-outside-w}\\
    &+ \bsime\nonumber
\end{align}\end{subequations}
where $m_j^{\cS}$ denotes a generic symbol satisfying the same requirements as symbols $m_j$, but with the stricter support requirement 
\begin{equation*}
    \left\{|\xi| + |\eta| + |\sigma| \sim 2^j, |\xi - \eta| \ll 2^j\right\} \cup \left\{|\xi| + |\eta| + |\sigma| \sim 2^j, |\xi - \sigma| \ll 2^j\right\}
\end{equation*}
We now turn to the task of deriving estimates for~\cref{eqn:mS-dyadic-deriv-on-LS,eqn:mS-dyadic-LS,eqn:mS-dyadic-Lw-deriv,eqn:mS-dyadic-no-deriv,eqn:mS-dyadic-deriv-on-S-and-Lw,eqn:mS-dyadic-SSw,eqn:mS-dyadic-LS-and-outside-w}.  Using the support condition on $m^\cS_j$, we can decompose any pseudoproduct $T_{m_j^{\cS}}$ as
\begin{equation}\label{eqn:space-non-res-pseudoproduct-expansion}\begin{split}
    T_{m_j^{\cS}}(p,q,r) =& T_{m_j^\cS} (p_{\ll j}, q_{\lesssim j}, r_{\sim j}) + T_{m_j^\cS}(p_{\sim j}, q_{\lesssim j}, r_{\ll j}) + Q_{\sim j}T_{m_j^{\cS}}(p_{\sim j}, q_{\sim j}, r_{\sim j})
\end{split}\end{equation}
The term $T_{m_j^\cS}(p_{\sim j}, q_{\ll j}, r_{\sim j})$ does not appear in the expansion because of the support assumption on $m^\cS_j$.  In particular, we always have $|\xi - \eta - \sigma| \lesssim \max\{|\eta|, |\sigma|\}$, which helps us to control the derivative.  We now consider each term~\cref{eqn:mS-dyadic-no-deriv,eqn:mS-dyadic-Lw-deriv,eqn:mS-dyadic-LS,eqn:mS-dyadic-deriv-on-LS,eqn:mS-dyadic-deriv-on-S-and-Lw,eqn:mS-dyadic-SSw,eqn:mS-dyadic-LS-and-outside-w} in turn and use the division~\eqref{eqn:space-non-res-pseudoproduct-expansion} and the decay estimates to obtain the bound~\eqref{eqn:N-s-desired-bound}.

\subsubsection{The bound for \texorpdfstring{\eqref{eqn:mS-dyadic-no-deriv}}{(\ref{eqn:mS-dyadic-no-deriv}}} From~\eqref{eqn:space-non-res-pseudoproduct-expansion}, we can write
\begin{subequations}\begin{align}
    T_{m_j^\cS}(u, w, \overline{u}) =&  T_{m_{j}^\cS} (u_{\ll j}, w_{\lesssim j}, \overline{u}_{\sim j})\label{eqn:mS-no-deriv-low-freq}\\
    &+ Q_{\sim j} T_{m_j^\cS}(u_{\sim j}, w_{\sim j}, \overline{u}_{\sim j})\label{eqn:mS-no-deriv-equal-freq}\\
    &+\{\text{similar terms}\}\nonumber
\end{align}\end{subequations}
Recall that the symbols $m^\cS_j$ satisfy the hypotheses of~\Cref{thm:L1-symbol-bounds} uniformly in $j$.  Thus, using the $L^2$ bound for $w_{\sim j}$ from~\eqref{eqn:w-lin-ests-L-2} and the dispersive decay of $u_{\sim j}$ given in~\eqref{eqn:u-lin-ests}, we find that
\begin{equation*}\begin{split}
    \biggl\lVert \sum_{2^j \gtrsim t^{-1/3}}  Q_{\sim j} T_{m_j^\cS}(u_{\sim j}, w_{\sim j}, \overline{u}_{\sim j})\biggr\rVert_{L^2} \lesssim&  \biggl(\sum_{2^j \gtrsim t^{-1/3}} \lVert u_{\sim j} \rVert_{L^\infty}^4\lVert w_{\sim j} \rVert_{L^2}^2 \biggr)^{1/2}\\
    \lesssim& M^2\epsilon^2 t^{-1} \lVert xg \rVert_{L^2}
\end{split}\end{equation*}
as required.  For~\eqref{eqn:mS-no-deriv-low-freq}, we introduce a further dyadic decomposition in space to write
\begin{subequations}\begin{align}
    T_{m_{j}^\cS}(u_{\ll j}, w_{\lesssim j}, \overline{u}_{\sim j}) =& T_{m_{j}^\cS}(u_{\ll j}, w_{\lesssim j}, (1 - \chi_{[j - 20, j + 20]})\overline{u}_{\sim j})\label{eqn:mS-no-deriv-low-freq-1}\\
    &+ T_{m_{j}^\cS}(u_{\ll j}, w_{\lesssim j}, \chi_{[j - 20, j + 20]}\overline{u}_{\sim j})\label{eqn:mS-no-deriv-low-freq-2}
\end{align}\end{subequations}
The first term is straightforward to bound using~\eqref{eqn:u-lin-ests} and the $L^2$ estimates for $w$ from~\eqref{eqn:w-lin-ests-L-2}:
\begin{equation*}\begin{split}
    \lVert\eqref{eqn:mS-no-deriv-low-freq-1}\rVert_{L^2} \lesssim& \lVert u_{\ll j} \rVert_{L^\infty} \lVert (1 - \chi_{[j - 20, j + 20]}) u_{\sim j} \rVert_{L^\infty} \lVert w_{\lesssim j} \rVert_{L^2}\\
    \lesssim& M^2 \epsilon^2 t^{-7/6} 2^{-j/2} \lVert xg \rVert_{L^2}
\end{split}\end{equation*}
which is sufficient. Turning to the second term~\eqref{eqn:mS-no-deriv-low-freq-2}, we write
\begin{subequations}\begin{align}
    \eqref{eqn:mS-no-deriv-low-freq-2} =&  \chi_{[j - 30, j + 30]} T_{m_{j}^\cS}(\chi_{[j - 30, j + 30]} u_{\ll j}, \chi_{[j - 30 ,  j + 30]} w_{\lesssim j}, \chi_{[j - 20 ,  j + 20]}\overline{u}_{\sim j})\label{eqn:mS-no-deriv-low-freq-3-main}\\
    &+ (1-\chi_{[j - 30 ,  j + 30]}) T_{m_{j}^\cS}(u_{\ll j}, w_{\lesssim j}, \chi_{[j - 20 ,  j + 20]}\overline{u}_{\sim j})\label{eqn:mS-no-deriv-low-freq-3-sub-1}\\
    &+ \chi_{[j - 30 ,  j + 30]} T_{m_{j}^\cS}((1-\chi_{[j - 30 ,  j + 30]})u_{\ll j},w_{\lesssim j}, \chi_{[j - 20 ,  j + 20]}\overline{u}_{\sim j})\label{eqn:mS-no-deriv-low-freq-3-sub-2}\\
    &+ \chi_{[j - 30 ,  j + 30]} T_{m_{j}^\cS}(\chi_{[j - 30 ,  j + 30]}u_{\ll j}, (1-\chi_{[j - 30 ,  j + 30]})w_{\lesssim j}, \chi_{[j - 20 ,  j + 20]}\overline{u}_{\sim j})\label{eqn:mS-no-deriv-low-freq-3-sub-3}
\end{align}\end{subequations}
The subterms~\cref{eqn:mS-no-deriv-low-freq-3-sub-1,eqn:mS-no-deriv-low-freq-3-sub-2,eqn:mS-no-deriv-low-freq-3-sub-3} are non-pseudolocal in the sense of~\Cref{thm:cm-paraprod-pseudolocality}, so they satisfy the bound
\begin{equation*}\begin{split}
    \lVert \eqref{eqn:mS-no-deriv-low-freq-3-sub-1} \rVert_{L^2} + \lVert \eqref{eqn:mS-no-deriv-low-freq-3-sub-2} \rVert_{L^2} + \lVert \eqref{eqn:mS-no-deriv-low-freq-3-sub-3} \rVert_{L^2} 
    \lesssim& M^2\epsilon^2 t^{-11/6} 2^{-5/2j} \lVert xg \rVert_{L^2}
\end{split}\end{equation*}
which gives the required bound after summing in $j$.  To bound the leading order term~\eqref{eqn:mS-no-deriv-low-freq-3-main}, we use almost orthogonality and find that
\begin{equation*}\begin{split}
    \biggl\lVert \sum_{j} \eqref{eqn:mS-no-deriv-low-freq-3-main} \biggr\rVert_{L^2} \lesssim& \biggl(\sum_{2^j \gtrsim t^{-1/3}} \bigl(   \lVert \chi_{[j - 30 ,  j + 30]}u_{\ll j}\rVert_{L^\infty} \lVert \chi_{[j - 30 ,  j + 30]}w_{\lesssim j} \rVert_{L^2} \lVert {u}_{\sim j}\rVert_{L^\infty}\bigr)^2\biggr)^{1/2}\\
    \lesssim& M^2\epsilon^2 t^{-1} \lVert xg \rVert_{L^2}
\end{split}\end{equation*}


\subsubsection{The bound for~\texorpdfstring{\eqref{eqn:mS-dyadic-Lw-deriv}}{(\ref{eqn:mS-dyadic-Lw-deriv})}}The estimates for~\eqref{eqn:mS-dyadic-Lw-deriv} are analogous to those for~\eqref{eqn:mS-dyadic-no-deriv} once we use the bounds
\begin{equation*}\begin{split}
    \lVert \partial_x (Lw)_{j} \rVert_{L^2} \sim& 2^j\lVert xg \rVert_{L^2}c_j\\
    \lVert \partial_x (Lw)_{\lesssim j} \rVert_{L^2} \lesssim& 2^j \lVert xg \rVert_{L^2}
\end{split}\end{equation*}
in place of the Hardy-type bounds on $w$.  

\subsubsection{The bound for~\texorpdfstring{\eqref{eqn:mS-dyadic-LS}}{(\ref{eqn:mS-dyadic-LS}}} Applying~\eqref{eqn:space-non-res-pseudoproduct-expansion}, we find
\begin{subequations}\begin{align}
    T_{m_j^{\cS}}(u, \partial_x w, \overline{LS}) =& T_{m_j^{\cS}}(u_{\ll j}, \partial_x w_{\lesssim j}, (\overline{LS})_{\sim j}) \label{eqn:mS-dyadic-LS-ll-sim}\\
    &+ T_{m_j^{\cS}}(u_{\sim j}, \partial_x w_{\lesssim j}, (\overline{LS})_{\ll j}) \label{eqn:mS-dyadic-LS-sim-ll}\\
    &+ Q_{\sim j}T_{m_j^{\cS}}(u_{\sim j}, \partial_x w_{\sim j}, (\overline{LS})_{\sim j}) \label{eqn:mS-dyadic-LS-sim-sim}
\end{align}\end{subequations}
We first consider~\eqref{eqn:mS-dyadic-LS-sim-sim}.  Using almost orthogonality and the estimates for $LS$ from~\eqref{eqn:LS-cubic-bounds}, we find that
\begin{equation*}\begin{split}
    \biggl\lVert \sum_{j} \eqref{eqn:mS-dyadic-LS-sim-sim}  \biggr\rVert_{L^2} \lesssim& \biggl(\sum_{2^j \gtrsim t^{-1/3}} \lVert u_{\sim j} \rVert_{L^\infty}^2 \lVert \partial_x w_{\sim j} \rVert_{L^2}^2 \lVert (LS)_{\sim j} \rVert_{L^\infty}^2\biggr)^{1/2}\\
    \lesssim& M\epsilon^4 t^{-1} \lVert xg \rVert_{L^2}
\end{split}\end{equation*}
where we have used the $L^2$ bounds for $\partial_x w_j$ from~\eqref{eqn:w-lin-ests-L-2}.  For~\eqref{eqn:mS-dyadic-LS-ll-sim}, we divide dyadically in space to obtain
\begin{subequations}\begin{align}
    T_{m_j^{\cS}}(u_{\ll j}, \partial_x w_{\lesssim j}, (\overline{LS})_{\sim j}) =& T_{m_j^{\cS}}(\chi_{> j} u_{\ll j}, \partial_x w_{\lesssim j}, (\overline{LS})_{\sim j})\label{eqn:mS-dyadic-LS-ll-sim-1}\\
    &+ \sum_{k < j - 40} T_{m_j^{\cS}}(\chi_{k} u_{\ll j}, \partial_x w_{\lesssim j}, (\overline{LS})_{\sim j})\label{eqn:mS-dyadic-LS-ll-sim-3}\\
    &+ T_{m_j^{\cS}}(\chi_{[j - 40, j]} u_{\ll j}, \partial_x w_{\lesssim j}, (\overline{LS})_{\sim j})\label{eqn:mS-dyadic-LS-ll-sim-2}
\end{align}\end{subequations}
For the first term, the dispersive decay estimates give us the bound
\begin{equation*}\begin{split}
    \lVert \eqref{eqn:mS-dyadic-LS-ll-sim-1} \rVert_{L^2} 
    \lesssim& M \epsilon^4 t^{-4/3} 2^{-j} \lVert xg \rVert_{L^2}
\end{split}\end{equation*}
which we can sum in $j$ to get the required bound.  For the second term, we perform the further division
\begin{subequations}\begin{align}
    \eqref{eqn:mS-dyadic-LS-ll-sim-3} =& \sum_{k < j - 40} T_{m_j^{\cS}}(\chi_{k} u_{\ll j}, \chi_{\sim k}\partial_x w_{\lesssim j}, \chi_{\sim k} (\overline{LS})_{\sim j})\label{eqn:mS-dyadic-LS-ll-sim-3-main}\\
    &+ \sum_{k < j - 40} T_{m_j^{\cS}}(\chi_{k} u_{\ll j}, (1-\chi_{\sim k})\partial_x w_{\lesssim j}, (\overline{LS})_{\sim j})\label{eqn:mS-dyadic-LS-ll-sim-3-sub-1}\\
    &+ \sum_{k < j - 40} T_{m_j^{\cS}}(\chi_{k} u_{\ll j}, \chi_{\sim k}\partial_x w_{\lesssim j}, (1- \chi_{\sim k})(\overline{LS})_{\sim j})\label{eqn:mS-dyadic-LS-ll-sim-3-sub-2}
\end{align}\end{subequations}
\Cref{thm:cm-paraprod-pseudolocality} applies to the pseudoproducts in the terms~\cref{eqn:mS-dyadic-LS-ll-sim-3-sub-2,eqn:mS-dyadic-LS-ll-sim-3-sub-1}, yielding the bound
\begin{equation*}\begin{split}
    \lVert \eqref{eqn:mS-dyadic-LS-ll-sim-3-sub-1} \rVert_{L^2} + \lVert\eqref{eqn:mS-dyadic-LS-ll-sim-3-sub-2} \rVert_{L^2} 
    \lesssim& M \epsilon^4 t^{-7/6} 2^{-j/2} \lVert xg \rVert_{L^2}
\end{split}\end{equation*}
which is sufficient.  To bound the remaining term~\eqref{eqn:mS-dyadic-LS-ll-sim-3-main}, we use the bounds for the localized terms $\chi_{\sim k} \partial_x w_{\lesssim j}$ and $\chi_{\sim k} (LS)_{\sim j}$ from~\eqref{eqn:w-lin-ests-L-2} and~\eqref{eqn:LS-cubic-bounds} to obtain
\begin{equation*}\begin{split}
    \lVert \eqref{eqn:mS-dyadic-LS-ll-sim-3-main} \rVert_{L^2} \lesssim& \sum_{t^{-1/3} \leq 2^k \leq 2^{j-40}} \lVert \chi_k u_{\ll j} \rVert_{L^\infty} \lVert \chi_{\sim k} \partial_x w_{\lesssim j} \rVert_{L^2} \lVert \chi_{\sim k} (LS)_{\sim j} \rVert_{L^\infty}\\
    \lesssim& M\epsilon^4 t^{-4/3} 2^{-j} \lVert xg \rVert_{L^2}
\end{split}\end{equation*}
To bound~\eqref{eqn:mS-dyadic-LS-ll-sim-2}, we write
\begin{subequations}\begin{align}
    \eqref{eqn:mS-dyadic-LS-ll-sim-2} =&  \chi_{[j - 50, j+10]} T_{m_j^\cS}(\chi_{[j - 40, j]} S_{\ll j}, \chi_{[j - 50, j+10]} \partial_x w_{\lesssim j}, (\overline{LS})_{\sim j})\label{eqn:mS-dyadic-LS-ll-sim-2-main}\\
    &+ (1-\chi_{[j - 50, j+10]})T_{m_j^\cS}(\chi_{[j - 40, j]} S_{\ll j}, \chi_{[j - 50, j+10]} \partial_x w_{\lesssim j}, (\overline{LS})_{\sim j})\label{eqn:mS-dyadic-LS-ll-sim-2-sub-1}\\
    &+  T_{m_j^\cS}(\chi_{[j - 40, j]} S_{\ll j}, (1-\chi_{[j - 50, j+10]}) \partial_x w_{\lesssim j}, (\overline{LS})_{\sim j})\label{eqn:mS-dyadic-LS-ll-sim-2-sub-2}
\end{align}\end{subequations}
The terms~\cref{eqn:mS-dyadic-LS-ll-sim-2-sub-1,eqn:mS-dyadic-LS-ll-sim-2-sub-2} can be handled using~\Cref{thm:cm-paraprod-pseudolocality} in the same manner as~\cref{eqn:mS-dyadic-LS-ll-sim-3-sub-2,eqn:mS-dyadic-LS-ll-sim-3-sub-1}.  The terms in~\eqref{eqn:mS-dyadic-LS-ll-sim-2-main} are almost orthogonal in physical space, so we can write
\begin{equation*}\begin{split}
    \biggl\lVert \sum_{j} \eqref{eqn:mS-dyadic-LS-ll-sim-2-main} \biggr\rVert_{L^2} \lesssim& \biggl( \sum_{2^j \gtrsim t^{-1/3}} \lVert \chi_{[j - 40, j]}u_{\ll j} \rVert_{L^\infty}^2  \lVert \chi_{[j - 50, j+10]} \partial_x w_{\lesssim j} \rVert_{L^2}^2 \lVert ({LS})_{\sim j}\rVert_{L^\infty}^2\biggr)^{1/2}\\
    \lesssim& M\epsilon^4 t^{-1}\lVert xg \rVert_{L^2}
\end{split}\end{equation*}
Combining the estimates for~\cref{eqn:mS-dyadic-LS-ll-sim-1,eqn:mS-dyadic-LS-ll-sim-2,eqn:mS-dyadic-LS-ll-sim-3} shows that the bound for~\eqref{eqn:mS-dyadic-LS-ll-sim} holds.

We now turn to~\eqref{eqn:mS-dyadic-LS-sim-ll}.  We divide dyadically in space to write
\begin{subequations}\begin{align}
    \eqref{eqn:mS-dyadic-LS-sim-ll} =& \sum_{k < j - 30} T_{m_j^\cS}(u_{\sim j}, \partial_x w_{\lesssim j}, \chi_k (\overline{LS})_{\ll j}) \label{eqn:mS-dyadic-LS-sim-ll-1}\\
    &+ T_{m_j^\cS}(u_{\sim j}, \partial_x w_{\lesssim j}, \chi_{[j - 30 ,  j + 30]} (\overline{LS})_{\ll j}) \label{eqn:mS-dyadic-LS-sim-ll-2}\\
    &+ \sum_{k > j + 30} T_{m_j^\cS}(u_{\sim j}, \partial_x w_{\lesssim j}, \chi_k (\overline{LS})_{\ll j}) \label{eqn:mS-dyadic-LS-sim-ll-3}
\end{align}\end{subequations}
For~\eqref{eqn:mS-dyadic-LS-sim-ll-1}, we have
\begin{subequations}\begin{align}
    \eqref{eqn:mS-dyadic-LS-sim-ll-1} =& \sum_{k < j - 30} T_{m_j^\cS}(\chi_{\sim k} u_{\sim j}, \chi_{\sim k} \partial_x w_{\lesssim j}, \chi_k (\overline{LS})_{\ll j})\label{eqn:mS-dyadic-LS-sim-ll-1-main}\\
    &+ \sum_{k < j - 30} T_{m_j^\cS}((1 - \chi_{\sim k}) u_{\sim j}, \partial_x w_{\lesssim j}, \chi_k (\overline{LS})_{\ll j})\label{eqn:mS-dyadic-LS-sim-ll-1-sub-1}\\
    &+ \sum_{k < j - 30} T_{m_j^\cS}(\chi_{\sim k} u_{\sim j}, (1 - \chi_{\sim k}) \partial_x w_{\lesssim j}, \chi_k (\overline{LS})_{\ll j})\label{eqn:mS-dyadic-LS-sim-ll-1-sub-2}
\end{align}\end{subequations}
The bounds for~\cref{eqn:mS-dyadic-LS-sim-ll-1-sub-1,eqn:mS-dyadic-LS-sim-ll-1-sub-2} immediately follow from~\Cref{thm:cm-paraprod-pseudolocality}:
\begin{equation*}\begin{split}
    \lVert \eqref{eqn:mS-dyadic-LS-sim-ll-1-sub-1} \rVert_{L^2} + \lVert \eqref{eqn:mS-dyadic-LS-sim-ll-1-sub-2} \rVert_{L^2} \lesssim& \smashoperator[r]{\sum_{t^{-1/3} \lesssim 2^k < 2^{j - 30}}} (t2^{2k+j})^{-2} \lVert u_{\sim j} \rVert_{L^\infty} \lVert (LS)_{\ll j} \rVert_{L^\infty} \lVert \partial_x w_{\lesssim j} \rVert_{L^2}\\
    \lesssim& M\epsilon^2 t^{-7/6} 2^{-j/2} \lVert xg \rVert_{L^2}
\end{split}\end{equation*}
which is sufficient.  Turning to~\eqref{eqn:mS-dyadic-LS-sim-ll-1-main}, we have that
\begin{equation*}\begin{split}
    \lVert \eqref{eqn:mS-dyadic-LS-sim-ll-1-main} \rVert_{L^2} \lesssim& \sum_{k < j - 30} \lVert \chi_{\sim k} u_{\sim j} \rVert_{L^\infty} \lVert \chi_{\sim k} \partial_x w_{\lesssim j} \rVert_{L^2} \lVert \chi_k (LS)_{\ll j} \rVert_{L^\infty}\\
    \lesssim& M\epsilon^4 t^{-4/3} 2^{-j} \lVert xg \rVert_{L^2}
\end{split}\end{equation*}
which is acceptable, completing the argument for~\eqref{eqn:mS-dyadic-LS-sim-ll-1}.  Turning to~\eqref{eqn:mS-dyadic-LS-sim-ll-2}, we can write
\begin{subequations}\begin{align}
    \eqref{eqn:mS-dyadic-LS-sim-ll-2} =& \chi_{[j - 40 ,  j + 40]} T_{m_j^\cS} (u_{\sim j}, \chi_{[j - 40 ,  j + 40]} \partial_x w_{\lesssim j}, \chi_{[j - 30 ,  j + 30]} (\overline{LS}){\ll j})\label{eqn:mS-dyadic-LS-sim-ll-2-main}\\
    &+ (1-\chi_{[j - 40 ,  j + 40]}) T_{m_j^\cS} (u_{\sim j}, \chi_{[j - 40 ,  j + 40]} \partial_x w_{\lesssim j}, \chi_{[j - 30 ,  j + 30]} (\overline{LS}){\ll j})\label{eqn:mS-dyadic-LS-sim-ll-2-sub-1}\\
    &+ T_{m_j^\cS} (u_{\sim j}, (1-\chi_{[j - 40 ,  j + 40]}) \partial_x w_{\lesssim j}, \chi_{[j - 30 ,  j + 30]} (\overline{LS}){\ll j})\label{eqn:mS-dyadic-LS-sim-ll-2-sub-2}
\end{align}\end{subequations}
The terms~\cref{eqn:mS-dyadic-LS-sim-ll-2-sub-1,eqn:mS-dyadic-LS-sim-ll-2-sub-2} can be controlled using~\Cref{thm:cm-paraprod-pseudolocality}, so we focus on~\eqref{eqn:mS-dyadic-LS-sim-ll-2-main}.  There, we use almost orthogonality to obtain
\begin{equation*}\begin{split}
    \biggl\lVert  \sum_{j}\eqref{eqn:mS-dyadic-LS-sim-ll-2-main} \biggr\rVert_{L^2} \lesssim& \biggl(  \smashoperator[r]{\sum_{2^j\gtrsim t^{-1/3}}} \lVert u_{\sim j} \rVert_{L^\infty}^2 \lVert \chi_{[j - 40 ,  j + 40]} \partial_x w_{\lesssim j} \rVert_{L^2}^2 \lVert \chi_{[j - 30 ,  j + 30]} (LS)_{\ll j} \rVert_{L^\infty}^2\biggr)^{\frac{1}{2}}\\
    \lesssim& M\epsilon^4 t^{-1} \lVert xg \rVert_{L^2}
\end{split}\end{equation*}
as required.

Finally, for~\eqref{eqn:mS-dyadic-LS-sim-ll-3}, we have
\begin{subequations}\begin{align}
    \eqref{eqn:mS-dyadic-LS-sim-ll-3} =&  T_{m_j^\cS}(\chi_{> j + 20} u_{\sim j}, \partial_x w_{\lesssim j}, \chi_{> j + 30} (\overline{LS})_{\ll j}) \label{eqn:mS-dyadic-LS-sim-ll-3-main}\\
    &+ T_{m_j^\cS}(\chi_{\leq j - 20} u_{\sim j}, \partial_x w_{\lesssim j}, \chi_{> j + 30} (\overline{LS})_{\ll j}) \label{eqn:mS-dyadic-LS-sim-ll-3-sub}
\end{align}\end{subequations}
The second term is easily controlled using~\Cref{thm:cm-paraprod-pseudolocality}, and for the main term~\eqref{eqn:mS-dyadic-LS-sim-ll-3-main} we have that
\begin{equation*}\begin{split}
    \lVert \eqref{eqn:mS-dyadic-LS-sim-ll-3-main} \rVert_{L^2} \lesssim& \lVert  \chi_{> j + 20} u_{\sim j}\rVert_{L^\infty}\lVert \partial_x w_{\lesssim j}\rVert_{L^2}\lVert  \chi_{> j + 30} (\overline{LS})_{\ll j}) \rVert_{L^\infty}\\
    \lesssim& M^4\epsilon^4 t^{-4/3} 2^{-j} \lVert xg \rVert_{L^2}
\end{split}\end{equation*}
which is sufficient to bound~\eqref{eqn:mS-dyadic-LS-sim-ll-3}, completing the estimate for~\eqref{eqn:mS-dyadic-LS}.

\begin{rmk}\label{rmk:non-pseudolocal-exclusion}
    In estimating~\cref{eqn:mS-dyadic-no-deriv,eqn:mS-dyadic-Lw-deriv,eqn:mS-dyadic-LS}, we frequently performed decompositions in space along the lines of
    \begin{equation*}
        T_{m_j^\cS}(p,q,\chi_k r) = \chi_{\sim k} T_{m_j^\cS}(\chi_{\sim k} p, \chi_{\sim k} q,\chi_k r) + \bnpl
    \end{equation*}
    where $\bnpl$ denotes terms which can be estimated using~\Cref{thm:cm-paraprod-pseudolocality}.  The estimates for the nonpseudolocal remainder terms are routine: they do not require any refined linear or cubic estimates and are insensitive to the precise frequency localization of $p$, $q$, and $r$.  Thus, in the interest of the exposition, we will not estimate these non-pseudolocal remainders or even write them explicitly in the following sections.
\end{rmk}

\subsubsection{The bound for~\texorpdfstring{\eqref{eqn:mS-dyadic-SSw}}{(\ref{eqn:mS-dyadic-SSw})}} Here, we have
\begin{subequations}\begin{align}
    T_{m_j^\cS}(u, S, \overline{w}) =& Q_{\sim j} T_{m_j^\cS}(u_{\ll j}, S_{\ll j}, \overline{w}_{\sim j}) \label{eqn:mS-SSw-i}\\
    &+ T_{m_j^\cS}(u_{\sim j}, S_{\sim j}, \overline{w}_{\ll j})\label{eqn:mS-SSw-iv}\\
    &+ T_{m_j^\cS}(u_{\ll j}, S_{\sim j}, \overline{w}_{\sim j}) \label{eqn:mS-SSw-ii}\\
    &+ T_{m_j^\cS}(u_{\sim j}, S_{\ll j}, \overline{w}_{\ll j})\label{eqn:mS-SSw-iii}\\
    &+ Q_{\sim j} T_{m_j^\cS}(u_{\sim j}, S_{\sim j}, \overline{w}_{\sim j})\label{eqn:mS-SSw-v}
\end{align}\end{subequations}
Notice that the terms~\eqref{eqn:mS-SSw-ii} and~\eqref{eqn:mS-SSw-iii} can be controlled in the same way as~\eqref{eqn:mS-no-deriv-low-freq}, and that the term~\eqref{eqn:mS-SSw-v} can be controlled in the same way as~\eqref{eqn:mS-no-deriv-equal-freq}.   Thus, it only remains to consider the contribution from the first two terms.  For~\eqref{eqn:mS-SSw-i}, we write
\begin{subequations}\begin{align}
    \eqref{eqn:mS-SSw-i} =& Q_{\sim j} T_{m_j^\cS}(\chi_{< j - 30} u_{\ll j}, \chi_{< j - 20} S_{\ll j}, \chi_{< j - 20} \overline{w}_{\sim j}) \label{eqn:mS-SSw-i-1}\\
    &+ Q_{\sim j} T_{m_j^\cS}(\chi_{\geq j - 30} u_{\ll j}, \chi_{\geq j - 40} S_{\ll j}, \overline{w}_{\sim j}) \label{eqn:mS-SSw-i-2}\\
    &+ \bnpl\notag
\end{align}\end{subequations}
The non-pseudolocal terms are easily handled (see~\Cref{rmk:non-pseudolocal-exclusion}).  For~\eqref{eqn:mS-SSw-i-1}, we use the bound for $\chi_{\leq j - 30} w_{\sim j}$ from~\eqref{eqn:w-lin-ests-L-2} together with the fact that the terms in~\eqref{eqn:mS-SSw-i-1} are almost orthogonal to find that
\begin{equation*}\begin{split}
    \biggl\lVert \sum_{j} \eqref{eqn:mS-SSw-i-1} \biggr\rVert_{L^2} \lesssim&  \biggl( \sum_{2^j \gtrsim t^{-1/3}} \lVert u_{\ll j}\rVert_{L^\infty}^2 \lVert S_{\ll j} \rVert_{L^\infty}^2 \lVert \chi_{\leq j - 30} w_{\sim j} \rVert_{L^2}^2\biggr)^{1/2}\\
    \lesssim& M\epsilon^2 t^{-1} \lVert xg \rVert_{L^2}
\end{split}\end{equation*}
For~\eqref{eqn:mS-SSw-i-2}, the improved decay of $\chi_{\geq j - 30} u_{\ll j}$ and $\chi_{\geq j - 40} S_{\ll j}$ given by~\eqref{eqn:u-lin-ests} gives us the bound
\begin{equation*}\begin{split}
    \lVert \eqref{eqn:mS-SSw-i-2} \rVert_{L^2} \lesssim&\lVert \chi_{\geq j - 30} u_{\ll j}\rVert_{L^\infty} \lVert \chi_{\geq j - 40} S_{\ll j} \rVert_{L^\infty}\lVert w_{\sim j}\rVert_{L^2}\\
    \lesssim& M \epsilon^2 t^{-5/3}2^{-2j} \lVert xg \rVert_{L^2}
\end{split}\end{equation*}
which is sufficient after summing in $j$.  Turning to~\eqref{eqn:mS-SSw-iv}, we write
\begin{subequations}\begin{align}
    \eqref{eqn:mS-SSw-iv} =& T_{m_j^\cS}((1 - \chi_{[j-20, j+20]})u_{\sim j}, S_{\sim j}, \overline{w}_{\ll j})\label{eqn:mS-SSw-iv-1}\\
    &+ \chi_{[j-30, j+30]} T_{m_j^\cS}(\chi_{[j-20 ,  j+20]} u_{\sim j}, S_{\sim j}, \chi_{[j-30 ,  j+30]} \overline{w}_{\ll j}) \label{eqn:mS-SSw-iv-2}\\
    &+ \bnpl\notag
\end{align}\end{subequations}
The first term is easily bounded using the linear decay estimates.  
For~\eqref{eqn:mS-SSw-iv-2}, we use almost orthogonality to write
\begin{equation*}\begin{split}
    \biggl\lVert \sum_{j} \eqref{eqn:mS-SSw-iv-2} \biggr\rVert_{L^2} \lesssim&\biggl( \sum_{2^j \gtrsim t^{-1/3}} \lVert u_{\sim j}\rVert_{L^\infty}^2\lVert S_{\sim j}\rVert_{L^\infty}^2 \lVert \chi_{[j-30 ,  j+30]} \overline{w}_{\ll j})\rVert_{L^2}^2\biggr)^{1/2}\\
    \lesssim& M^2\epsilon^2 t^{-1} \lVert xg \rVert_{L^2}
\end{split}\end{equation*}
The bound for~\eqref{eqn:mS-dyadic-SSw} now follows.

\subsubsection{The bound for~\texorpdfstring{\eqref{eqn:mS-dyadic-deriv-on-LS}}{(\ref{eqn:mS-dyadic-deriv-on-LS})}} Here, we decompose the pseudoproduct as
\begin{subequations}\begin{align}
    T_{m_j^{\cS}}(u, \partial_x LS, \overline{w}) =& T_{m_j^{\cS}}(u_{\ll j}, \partial_x (LS)_{\sim j}, \overline{w}_{\sim j})\label{eqn:mS-dyadic-deriv-on-LS-ll-sim-sim}\\
    &+ Q_{\sim j} T_{m_j^{\cS}}(u_{\ll j}, \partial_x (LS)_{\ll j}, \overline{w}_{\sim j})\label{eqn:mS-dyadic-deriv-on-LS-ll-ll-sim}\\
    &+ T_{m_j^{\cS}}(u_{\sim j}, \partial_x (LS)_{\lesssim j}, \overline{w}_{\ll j})\label{eqn:mS-dyadic-deriv-on-LS-sim-ll}\\
    &+ Q_{\sim j} T_{m_j^{\cS}}(u_{\sim j}, \partial_x (LS)_{\sim j}, \overline{w}_{\sim j})\label{eqn:mS-dyadic-deriv-on-LS-all-sim}
\end{align}\end{subequations}
For~\eqref{eqn:mS-dyadic-deriv-on-LS-ll-sim-sim}, we have
\begin{subequations}\begin{align}
    \eqref{eqn:mS-dyadic-deriv-on-LS-ll-sim-sim}
    =& T_{m_j^{\cS}}( \chi_{> j}u_{\ll j}, \partial_x (LS)_{\sim j}, \overline{w}_{\sim j})\label{eqn:mS-dyadic-deriv-on-LS-ll-sim-sim-1}\\
    &+ \chi_{[j - 50 ,  j+10]}T_{m_j^{\cS}}( \chi_{[j - 40 ,  j]}u_{\ll j}, \partial_x (LS)_{\sim j}, \overline{w}_{\sim j})\label{eqn:mS-dyadic-deriv-on-LS-ll-sim-sim-2}\\
    &+ T_{m_j^{\cS}}( \chi_{< j - 40}u_{\ll j}, \partial_x (LS)_{\sim j}, \chi_{< j - 30} \overline{w}_{\sim j})\label{eqn:mS-dyadic-deriv-on-LS-ll-sim-sim-3}\\
    &+ \bnpl\notag
\end{align}\end{subequations}
The first and third terms are straightforward to bound:
\begin{equation*}\begin{split}
    \lVert \eqref{eqn:mS-dyadic-deriv-on-LS-ll-sim-sim-1} \rVert_{L^2} \lesssim& \lVert \chi_{> j} u_{\ll j} \rVert_{L^\infty} \lVert \partial_x (LS)_{\sim j} \rVert_{L^\infty} \lVert w_{\sim j} \rVert_{L^2}\\
    \lesssim& M\epsilon^4 t^{-4/3} 2^{-j} \lVert xg \rVert_{L^2}\\
    \lVert \eqref{eqn:mS-dyadic-deriv-on-LS-ll-sim-sim-3} \rVert_{L^2} \lesssim& \lVert u_{\ll j} \rVert_{L^\infty} \lVert \partial_x (LS)_{\sim j} \rVert_{L^\infty} \lVert \chi_{< j - 30} w_{\sim j} \rVert_{L^2}\\
    \lesssim& M\epsilon^4 t^{-7/6} 2^{-j/2} \lVert xg \rVert_{L^2}
\end{split}\end{equation*}
For the second term, almost orthogonality implies that
\begin{equation*}\begin{split}
    \biggl\lVert \sum_{j} \eqref{eqn:mS-dyadic-deriv-on-LS-ll-sim-sim-2} \biggr\rVert_{L^2} \lesssim& \biggl( \sum_{2^j \gtrsim t^{-1/3}} \lVert \chi_{[j - 40 ,  j]}u_{\ll j} \rVert_{L^\infty}^2 \lVert \partial_x (LS)_{\sim j}\rVert_{L^\infty}^2 \lVert {w}_{\sim j} \rVert_{L^2}^2\biggr)^{1/2}\\
    \lesssim& M\epsilon^4 t^{-1} \lVert xg \rVert_{L^2}
\end{split}\end{equation*}
which gives the required bound for~\eqref{eqn:mS-dyadic-deriv-on-LS-ll-sim-sim}.  Turning to~\eqref{eqn:mS-dyadic-deriv-on-LS-ll-ll-sim}, we have
\begin{subequations}\begin{align}
    \eqref{eqn:mS-dyadic-deriv-on-LS-ll-ll-sim} =& Q_{\sim j} T_{m_j^{\cS}}(u_{\ll j}, \partial_x (LS)_{\ll j}, \chi_{< j - 30} \overline{w}_{\sim j})\label{eqn:mS-dyadic-deriv-on-LS-ll-ll-sim-1}\\
    &+ Q_{\sim j} T_{m_j^{\cS}}(\chi_{\geq j - 40}u_{\ll j}, \chi_{\geq j - 40}\partial_x (LS)_{\ll j}, \chi_{\geq j - 30} \overline{w}_{\sim j})\label{eqn:mS-dyadic-deriv-on-LS-ll-ll-sim-2}\\
    &+ \bnpl\notag
\end{align}\end{subequations}
For~\eqref{eqn:mS-dyadic-deriv-on-LS-ll-ll-sim-1}, we have that
\begin{equation*}\begin{split}
    \biggl\lVert \sum_{j} \eqref{eqn:mS-dyadic-deriv-on-LS-ll-ll-sim-1} \biggr\rVert_{L^2}\lesssim& \biggl(\sum_{2^j \gtrsim t^{-1/3}}\lVert u_{\ll j} \rVert_{L^\infty} \lVert \partial_x (LS)_{\ll j} \rVert_{L^\infty} \lVert  \chi_{< j - 30} {w}_{\sim j}\rVert_{L^2}^2\biggr)^{1/2}\\
    \lesssim& M\epsilon^4 t^{-1} \lVert xg \rVert_{L^2}
\end{split}\end{equation*}
while for~\eqref{eqn:mS-dyadic-deriv-on-LS-ll-ll-sim-2}, we have the bound
\begin{equation*}\begin{split}
    \left\lVert \eqref{eqn:mS-dyadic-deriv-on-LS-ll-ll-sim-2} \right\rVert_{L^2} \lesssim&  \lVert \chi_{\geq j - 40} u_{\ll j} \rVert_{L^\infty} \lVert \chi_{\geq j - 40} \partial_x(LS)_{\ll j} \rVert_{L^\infty} \lVert w_{\sim j} \rVert_{L^2}\\
    \lesssim& M\epsilon^4 t^{-4/3} 2^{-j}\lVert xg \rVert_{L^2}
\end{split}\end{equation*}
For~\eqref{eqn:mS-dyadic-deriv-on-LS-sim-ll}, we write
\begin{subequations}\begin{align}
    \eqref{eqn:mS-dyadic-deriv-on-LS-sim-ll} =& T_{m_j^{\cS}}((1 - \chi_{[j - 20 ,  j + 20]})u_{\sim j}, \partial_x (LS)_{\lesssim j}, \overline{w}_{\ll j}) \label{eqn:mS-dyadic-deriv-on-LS-sim-ll-1}\\
    &+ \chi_{[j - 30 ,  j + 30]} T_{m_j^{\cS}}(\chi_{[j - 20 ,  j + 20]}u_{\sim j}, \chi_{[j - 30 ,  j + 30]}\partial_x (LS)_{\lesssim j}, \chi_{[j - 30 ,  j + 30]} \overline{w}_{\ll j}) \label{eqn:mS-dyadic-deriv-on-LS-sim-ll-2}\\
    &+ \bnpl\notag
\end{align}\end{subequations}
For the first term,
\begin{equation*}\begin{split}
    \lVert \eqref{eqn:mS-dyadic-deriv-on-LS-sim-ll-1} \rVert_{L^2} \lesssim& \lVert \chi_{[j - 20, j + 20]} u_{\sim j} \rVert_{L^\infty} \lVert \partial_x(LS)_{\lesssim j} \rVert_{L^\infty} \lVert w_{\ll j} \rVert_{L^2}\\
    \lesssim& M\epsilon^4 t^{-7/6} 2^{-j/2} \lVert xg \rVert_{L^2}
\end{split}\end{equation*}
which is sufficient.  Turning to~\eqref{eqn:mS-dyadic-deriv-on-LS-sim-ll-2}, we find that
\begin{equation*}\begin{split}
    \biggl\lVert \sum_{j} \eqref{eqn:mS-dyadic-deriv-on-LS-sim-ll-2} \biggr\rVert_{L^2} \lesssim& \biggl( \smashoperator[r]{\sum_{2^j \gtrsim t^{-1/3}}} \lVert u_{\sim j} \rVert_{L^\infty}^2 \lVert \chi_{[j - 30 ,  j + 30]}\partial_x (LS)_{\lesssim j} \rVert_{L^\infty}^2 \lVert \chi_{[j - 30 ,  j + 30]} w_{\ll j} \rVert_{L^2}^2\biggr)^{\frac{1}{2}}\\
    \lesssim& M\epsilon^4 t^{-1} \lVert xg \rVert_{L^2}
\end{split}\end{equation*}
which completes the bound for~\eqref{eqn:mS-dyadic-deriv-on-LS-sim-ll}.  Finally, for~\eqref{eqn:mS-dyadic-deriv-on-LS-all-sim}, we use almost orthogonality and~\eqref{eqn:cubic-deriv-bounds-compendium} to conclude that
\begin{equation*}\begin{split}
    \biggl\lVert \sum_{j} \eqref{eqn:mS-dyadic-deriv-on-LS-all-sim} \biggr\rVert_{L^2} \lesssim& \biggl(\sum_{2^j \gtrsim t^{-1/3}} \lVert u_{\sim j}\rVert_{L^\infty} \lVert \partial_x(LS)_{\sim j}\rVert_{L^\infty} \lVert w_{\sim j}\rVert_{L^2}^2\biggr)^{1/2}\\
    \lesssim& M\epsilon^4 t^{-1} \lVert xg \rVert_{L^2}
\end{split}\end{equation*}

\subsubsection{The bound for~\texorpdfstring{\eqref{eqn:mS-dyadic-deriv-on-S-and-Lw}}{(\ref{eqn:mS-dyadic-deriv-on-S-and-Lw}}}  Using the support condition for $m_j^{\cS}$, we can decompose each summand as
\begin{subequations}\begin{align}
    T_{m_j^\cS}(u, \partial_x u, \overline{Lw}) =&
    T_{m_j^\cS}(u_{\sim j}, \partial_x u_{\lesssim j}, \overline{Lw})\label{eqn:mS-dyadic-deriv-and-Lw-i}\\
    &+T_{m_j^\cS}(u_{\ll j}, \partial_x u_{\sim j}, (\overline{Lw})_{\sim j})\label{eqn:mS-dyadic-deriv-and-Lw-ii}\\
    &+ Q_{\sim j} T_{m_j^\cS}(u_{\ll j}, \partial_x u_{\ll j}, (\overline{Lw})_{\sim j})\label{eqn:mS-dyadic-deriv-and-Lw-iii}
\end{align}\end{subequations}
For~\eqref{eqn:mS-dyadic-deriv-and-Lw-i}, we introduce the further decomposition
\begin{subequations}\begin{align} 
    \eqref{eqn:mS-dyadic-deriv-and-Lw-i} =& \chi_{[j - 40 ,  j + 40]}T_{m_j^\cS}( \chi_{[j - 30 ,  j + 30]} u_{\sim j}, \partial_x u_{\ll j}, \chi_{[j - 40 ,  j + 40]} \overline{Lw})\label{eqn:mS-dyadic-deriv-and-Lw-i-1}\\
    &+ T_{m_j^\cS}((1 - \chi_{[j - 30 ,  j + 30]})u_{\sim j}, \partial_x u_{\ll j}, \overline{Lw})\label{eqn:mS-dyadic-deriv-and-Lw-i-2}\\
    &+ \bnpl\notag
\end{align}\end{subequations}
The second term can be controlled by a directly using the decay estimates.  For the first term, we use almost orthogonality to obtain the bound
\begin{equation*}\begin{split}
    \biggl\lVert \sum_{j} \eqref{eqn:mS-dyadic-deriv-and-Lw-i-1} \biggr\rVert_{L^2} \lesssim& \biggl( \sum_{2^j \gtrsim t^{-1/3}} \lVert u_{\sim j}\rVert_{L^\infty}^2 \lVert \partial_x u_{\lesssim j}\rVert_{L^\infty}^2  \lVert \chi_{[j - 40 ,  j + 40]} Lw \rVert_{L^2}^2\biggr)^{1/2}\\
    \lesssim& M^2\epsilon^2 t^{-1} \lVert xg \rVert_{L^2}
\end{split}\end{equation*}
The bounds for~\eqref{eqn:mS-dyadic-deriv-and-Lw-ii} and ~\eqref{eqn:mS-dyadic-deriv-and-Lw-iii} follow from similar reasoning.

\subsubsection{The bound for~\texorpdfstring{\eqref{eqn:mS-dyadic-LS-and-outside-w}}{(\ref{eqn:mS-dyadic-LS-and-outside-w})}} It only remains to consider~\eqref{eqn:mS-dyadic-LS-and-outside-w}.  We can use the support restrictions on the $m_j^\cS$ to write
\begin{subequations}\begin{align}
    T_{m_j^\cS}(LS, \partial_xS, \overline{w}) =& Q_{\sim j} T_{m_j^\cS}( \partial_x(LS)_{\sim j}, S_{\sim j}, \overline{w}_{\sim j}) \label{eqn:mS-dyadic-LS-and-outside-w-i}\\
    & + T_{m_j^\cS}( \partial_x(LS)_{\sim j}, \partial_x S_{\sim j}, \overline{w}_{\ll j}) \label{eqn:mS-dyadic-LS-and-outside-w-ii}\\
    & + T_{m_j^\cS}( (LS)_{\ll j}, \partial_x S_{\sim j}, \partial_x\overline{w}_{\lesssim j}) \label{eqn:mS-dyadic-LS-and-outside-w-iii}\\
    & + T_{m_j^\cS}(\partial_x (LS)_{\sim j}, S_{\ll j}, \overline{w}_{\ll j})\label{eqn:mS-dyadic-LS-and-outside-w-iv}\\
    &+ Q_{\sim j}T_{m_j^\cS}( (LS)_{\ll j}, S_{\ll j}, \partial_x \overline{w}_{\sim j})\label{eqn:mS-dyadic-LS-and-outside-w-v}
\end{align}
\end{subequations}
Ignoring complex conjugation and permuting the arguments of the pseudoproducts, we see that the terms~\eqref{eqn:mS-dyadic-LS-and-outside-w-i},~\eqref{eqn:mS-dyadic-LS-and-outside-w-ii}, and~\eqref{eqn:mS-dyadic-LS-and-outside-w-iii} are essentially identical to~\eqref{eqn:mS-dyadic-LS-sim-sim},~\eqref{eqn:mS-dyadic-deriv-on-LS-sim-ll}, and~\eqref{eqn:mS-dyadic-LS-sim-ll}, respectively.  Therefore, we will focus on the last two terms.  For~\eqref{eqn:mS-dyadic-LS-and-outside-w-iv}, we write
\begin{subequations}\begin{align}
    \eqref{eqn:mS-dyadic-LS-and-outside-w-iv} =& T_{m_j^{\cS}}( \chi_{> j + 20} \partial_x(LS)_{\sim j}, \chi_{> j + 10} S_{\ll j}, \overline{w}_{\ll j})\label{eqn:mS-dyadic-LS-and-outside-w-iv-1}\\
    &+ \chi_{[j - 30, j+30]}T_{m_j^{\cS}}( \chi_{[j+20, j-20]} \partial_x(LS)_{\sim j}, \chi_{[j - 30, j+30]} S_{\ll j}, \chi_{[j - 30, j+30]}\overline{w}_{\ll j})\label{eqn:mS-dyadic-LS-and-outside-w-iv-2}\\
    &+ \sum_{k < j - 20} T_{m_j^{\cS}}( \chi_{k} \partial_x(LS)_{\sim j}, \chi_{\sim k} S_{\ll j}, \chi_{\sim k} \overline{w}_{\ll j})\label{eqn:mS-dyadic-LS-and-outside-w-iv-3}\\
    &+ \bnpl\notag
\end{align}
\end{subequations}
The first and last terms can be handled using decay estimates:
\begin{equation*}\begin{split}
    \lVert \eqref{eqn:mS-dyadic-LS-and-outside-w-iv-1} \rVert_{L^2} \lesssim& \lVert \chi_{> j + 20} \partial_x(LS)_{\sim j} \rVert_{L^\infty} \lVert\chi_{> j + 10} S_{\ll j} \rVert_{L^\infty} \lVert  \overline{w}_{\ll j} \rVert_{L^2}\\
    \lesssim& \epsilon^4 t^{-4/3} 2^{-j} \lVert xg \rVert_{L^2}\\
    \lVert \eqref{eqn:mS-dyadic-LS-and-outside-w-iv-3} \rVert_{L^2} \lesssim& \sum_{k < j - 30} \lVert \partial_x(LS)_{\sim j}\rVert_{L^\infty} \lVert \chi_{\sim k} S_{\ll j}\rVert_{L^\infty} \lVert \chi_{\sim k} \overline{w}_{\ll j} \rVert_{L^2}\\
    \lesssim& \epsilon^2 t^{-7/6} 2^{-j/2} \lVert xg\rVert_{L^2} 
\end{split}
\end{equation*}
and for the second term, we use almost orthogonality to get the bound
\begin{equation*}\begin{split}
    \biggl\lVert \sum_j \eqref{eqn:mS-dyadic-LS-and-outside-w-iv-2} \biggr\rVert_{L^2} \lesssim& \biggl( \smashoperator[r]{\sum_{2^j \gtrsim t^{-1/3}}} \lVert \partial_x (LS)_{\sim j} \rVert_{L^\infty}^2 \lVert \chi_{[j-30, j+30]} S_{\ll j} \rVert_{L^\infty}^2 \lVert \chi_{[j - 30, j+30]} w_{\ll j} \rVert_{L^2}^2\biggr)^{\frac{1}{2}}\\
    \lesssim& \epsilon^4 t^{-1} \lVert xg \rVert_{L^2}
\end{split}
\end{equation*}
Turning to~\eqref{eqn:mS-dyadic-LS-and-outside-w-v}, we introduce the decomposition
\begin{subequations}\begin{align}
    \eqref{eqn:mS-dyadic-LS-and-outside-w-v} =& Q_{\sim j} T_{m_j^\cS}(\chi_{> j + 30} (LS)_{\ll j}, \chi_{> j + 20} S_{\ll j}, \partial_x w_{\sim j}) \label{eqn:mS-dyadic-LS-and-outside-w-v-1}\\
    &+ Q_{\sim j} T_{m_j^\cS}(\chi_{[j - 30, j+ 30]} (LS)_{\ll j}, \chi_{[j - 40, j+ 40]} S_{\ll j}, \partial_x w_{\sim j}) \label{eqn:mS-dyadic-LS-and-outside-w-v-2}\\
    &+ \sum_{k < j - 30} Q_{\sim j} T_{m_j^\cS}(\chi_{k} (LS)_{\ll j}, \chi_{\sim k} S_{\ll j}, \chi_{\sim k} \partial_x w_{\sim j}) \label{eqn:mS-dyadic-LS-and-outside-w-v-3}
\end{align}
\end{subequations}
The first term is easily handled:
\begin{equation*}\begin{split}
    \lVert \eqref{eqn:mS-dyadic-LS-and-outside-w-v-1} \rVert_{L^2} \lesssim& \lVert \chi_{> j + 30} (LS)_{\ll j} \rVert_{L^\infty}\lVert \chi_{> j + 20} S_{\ll j}\rVert_{L^\infty}\lVert \partial_x w_{\sim j} \rVert_{L^2}\\
    \lesssim& \epsilon^4 t^{-4/3} 2^{-j} \lVert xg \rVert_{L^2}
\end{split}
\end{equation*}
For the remaining terms, we take advantage of the almost orthogonality coming from the frequency projection to obtain the bounds
\begin{equation*}\begin{split}
    \biggl\lVert \sum_j \eqref{eqn:mS-dyadic-LS-and-outside-w-v-2} \biggr\rVert_{L^2} \lesssim& \biggl(\smashoperator[r]{\sum_{2^j \gtrsim t^{-1/3}}} \lVert \chi_{[j - 30, j+ 30]} (LS)_{\ll j}\rVert_{L^\infty}^2 \lVert\chi_{[j - 40, j+ 40]} S_{\ll j}\rVert_{L^\infty}^2\lVert \partial_x w_{\sim j} \rVert_{L^2}^2\biggr)^{\frac{1}{2}}\\
    \lesssim& \epsilon^4 t^{-1} \lVert xg \rVert_{L^2}\\
    \biggl\lVert \sum_j \eqref{eqn:mS-dyadic-LS-and-outside-w-v-2} \biggr\rVert_{L^2} \lesssim& \Biggl(\sum_{2^j \gtrsim t^{-1/3}} \biggl(\smashoperator[r]{\sum_{k < j - 30}} \lVert \chi_k (LS)_{\ll j}\rVert_{L^\infty} \lVert \chi_{\sim k} S_{\ll j} \rVert_{L^\infty} \lVert \chi_{\sim k} \partial_x w_{\sim j} \rVert_{L^2}\biggr)^2\Biggr)^{\frac{1}{2}}\\
    \lesssim& \epsilon^4 t^{-1} \lVert xg \rVert_{L^2}
\end{split}
\end{equation*}

\subsection{The time non-resonant multiplier}\label{sec:xf-tr}

We control the term~\eqref{eqn:xg-time-non-res-term} by integrating by parts in $s$.  Defining $m_s = -\frac{i(\xi-\eta-\sigma) \partial_\xi \phi \chi_s^\cT}{\phi}$, we can integrate by parts to obtain
\begin{subequations}\begin{align}
    \int_1^t\eqref{eqn:xg-time-non-res-term}\,ds =& \mathord{-} s \Re\langle e^{-s\partial_x^3} xg, T(s)\rangle\Bigr|_{s=1}^{s=t} \label{eqn:time-non-res-bdy}\\
    &+ \Re\int_1^t \langle e^{-s\partial_x^3} xg, T(s) \rangle \,ds\label{eqn:time-non-res-no-s}\\
    &+ \Re\int_1^t s\langle e^{-s\partial_x^3} xg, \tilde{T}(s) \rangle \,ds\label{eqn:time-non-res-symbol-deriv}\\
    &+ \Re\int_1^t s\langle e^{-s\partial_x^3} xg, \mathring{T}(s)\rangle \,ds \label{eqn:time-non-res-arg-deriv}\\
    &+ \Re\int_1^t s\langle e^{-s\partial_x^3} x\partial_s g, T(s) \rangle \,ds \label{eqn:time-non-inner-product-deriv}
    \end{align}\end{subequations}
where, to ease notation, we have written
\begin{equation*}\begin{split}
    T(s) =& T_{m_s}(u, w, \overline{u}) + T_{m_s}(u, S, \overline{w}) + T_{m_s}(w, S, \overline{u})\\
    \tilde{T}(s) =& T_{\partial_s m_s}(u, w, \overline{u}) + T_{\partial_s m_s}(u, S, \overline{w}) + T_{\partial_s m_s}(w, S, \overline{u})\\
    \mathring{T}(s) =& T_{m_s}(e^{-s\partial_x^3} \partial_s f, w, \overline{u}) + T_{m_s}(u, e^{-s\partial_x^3} \partial_s g, \overline{u}) + T_{m_s}(u, w, \overline{e^{-s\partial_x^3} \partial_s f}) \\
    &+ T_{m_s}(e^{-s\partial_x^3} \partial_s f, S, \overline{w})+ T_{m_s}(u, e^{-s\partial_x^3} \partial_s h, \overline{w})+ T_{m_s}(u, S, \overline{e^{-s\partial_x^3} \partial_s g})\\
    &+ T_{m_s}(e^{-s\partial_x^3} \partial_s g, S, \overline{u}) + T_{m_s}(w, e^{-s\partial_x^3} \partial_s h, \overline{u})  + T_{m_s}(w, S, \overline{e^{-s\partial_x^3} \partial_s f}) 
\end{split}\end{equation*}
Note that $m_s$ satisfies Coifman-Meyer type bounds uniformly in time:
\begin{equation}\label{eqn:m-t-symbol-bounds}
    \left|(\xi^2 + \eta^2 + \sigma^2)|^{|\alpha|/2} \partial_{\xi,\eta,\sigma}^\alpha m_s\right| \lesssim_{\alpha} 1
\end{equation}
Thus, we can decompose pseudoproducts involving $m_s$ as
\begin{equation} \label{eqn:m-t-pseudoprod-expansion}
    T_{m_s}(p,q,r) = \sum_{2^j \gtrsim s^{-1/3}} T_{m_j}(p_{\sim j},q,r) + T_{m_j}(p,q_{\sim j},r) + T_{m_j}(p,q,r_{\sim j})
\end{equation}

\subsubsection{The bound for~\texorpdfstring{\eqref{eqn:time-non-res-bdy}}{(\ref{eqn:time-non-res-bdy})}}\label{sec:T-m-s-bounds}  We will first give the argument for the boundary $s = t$.  Since none of the frequencies $\eta$, $\sigma$, and $\xi - \eta - \sigma$ play a distinguished role, and since $S$ and $u$ obey the same decay estimates, it suffices to prove bounds for $T_{m_s}(u,w,\overline{u})$.  Using~\eqref{eqn:m-t-pseudoprod-expansion}, we find that
\begin{subequations}\begin{align}
        T_{m_t}(u,w,\overline{u}) =& \sum_{2^j \gtrsim t^{-1/3}} T_{m_j}(u_{\sim j}, w_{\lesssim j}, \overline{u}_{\sim j})\label{eqn:m-t-dyadic-sim-sim}\\
        & \qquad+ T_{m_j}(u_{\sim j}, w_{\lesssim j}, \overline{u}_{\ll j})\label{eqn:m-t-dyadic-sim-ll}\\
        &\qquad+ T_{m_j}(u_{\ll j}, w_{\lesssim j}, \overline{u}_{\sim j})\label{eqn:m-t-dyadic-ll-sim}\\
        &\qquad+ Q_{\sim j} T_{m_j}(u_{\ll j}, w_{\sim j}, \overline{u}_{\ll j})\label{eqn:m-t-dyadic-ll-ll}
\end{align}
\end{subequations}
The terms~\cref{eqn:m-t-dyadic-ll-sim,eqn:m-t-dyadic-sim-ll} are controlled using the same arguments as for~\eqref{eqn:mS-no-deriv-low-freq}:
\begin{equation*}
    \lVert \eqref{eqn:m-t-dyadic-ll-sim} \rVert_{L^2} + \lVert \eqref{eqn:m-t-dyadic-sim-ll} \rVert_{L^2} \lesssim M^2\epsilon^2 t^{-1} \lVert xg \rVert_{L^2}
\end{equation*}
Similarly, using the same argument as for~\eqref{eqn:mS-SSw-i}, we find that
\begin{equation*}
    \lVert \eqref{eqn:m-t-dyadic-ll-ll} \rVert_{L^2} \lesssim M^2\epsilon^2 t^{-1} \lVert xg \rVert_{L^2}
\end{equation*}
It only remains to bound~\eqref{eqn:m-t-dyadic-sim-sim}.  Here, we have that
\begin{subequations}\begin{align}
    \eqref{eqn:m-t-dyadic-sim-sim} =& \sum_{2^j \gtrsim t^{-1/3}}\chi_{[j - 30 ,  j+ 30]} T_{m_j}(\chi_{[j - 20 ,  j + 20]} u_{\sim j}, \chi_{[j - 30 ,  j+30]} w_{\lesssim j}, \overline{u}_{\sim j})\label{eqn:m-t-dyadic-sim-sim-1}\\
    &\qquad+ 
    T_{m_j}((1- \chi_{[j - 20 ,  j + 20]}) u_{\sim j},w_{\lesssim j}, \overline{u}_{\sim j})\label{eqn:m-t-dyadic-sim-sim-2}\\
    &\qquad + \bnpl\notag
\end{align}\end{subequations}
Both terms can be estimated using the same types of arguments we have employed previously, yielding the bounds
\begin{equation*}\begin{split}
    \lVert \eqref{eqn:m-t-dyadic-sim-sim-1} \rVert_{L^2} \lesssim& \biggl(\sum_{2^j \gtrsim t^{-1/3}} \lVert u_{\sim j} \rVert_{L^\infty}^4 \lVert  \chi_{[j - 30 ,  j+30]} w_{\lesssim j} \rVert_{L^2}^2 \biggr)^{1/2}\\
    \lesssim& M^2\epsilon^2 t^{-1} \lVert xg \rVert_{L^2}\\
    \lVert \eqref{eqn:m-t-dyadic-sim-sim-2} \rVert_{L^2} \lesssim& \sum_{2^j \gtrsim t^{-1/3}} \lVert (1 - \chi_{[j - 20 ,  j + 20]}) u_{\sim j} \rVert_{L^\infty} \lVert w_{\lesssim j} \rVert_{L^2} \lVert u_{\sim j} \rVert_{L^\infty}\\
    \lesssim& M^2\epsilon^2 t^{-1} \lVert xg \rVert_{L^2}
\end{split}\end{equation*}
Combining these estimates, we see that
\begin{equation}\label{eqn:T-t-bound}
    \lVert T(t) \rVert_{L^2} \lesssim M^2 \epsilon^2 t^{-1} \lVert xg \rVert_{L^2}
\end{equation}
so, by Cauchy-Schwarz,
\begin{equation*}
    | t \langle e^{-t\partial_x^3} xg, T(t)\rangle | \lesssim M^2\epsilon^2 \lVert xg \rVert_{L^2}^2
\end{equation*}
Repeating the above arguments at $s = 1$ and recalling~\eqref{eqn:xg-t-1-bd}, we see that
\begin{equation*}
    |\langle xg(x, 1), T(1)\rangle| \lesssim \epsilon^4
\end{equation*}
so we conclude that
\begin{equation*}
    |\eqref{eqn:time-non-res-bdy}| \lesssim M^2\epsilon^2 \lVert xg \rVert_{L^2}^2 + \epsilon^4
\end{equation*}
Since $M^2\epsilon^2 \ll 1$, the first term can be absorbed into the left-hand side of~\eqref{eqn:xg-desired-bound}, and the second term is better than required.

\subsubsection{The bound for~\texorpdfstring{\eqref{eqn:time-non-res-no-s}}{(\ref{eqn:time-non-res-no-s})}}
By using the bound for $\lVert T(s) \rVert_{L^2}$ derived above, we have at once that
\begin{equation*}
    |\eqref{eqn:time-non-res-no-s}| \lesssim \int_1^t M^2 \epsilon^2 s^{-1} \lVert xg(s) \rVert_{L^2}^2 \,ds
\end{equation*}
in agreement with~\eqref{eqn:xg-desired-bound}.

\subsubsection{The bound for~\texorpdfstring{\eqref{eqn:time-non-res-symbol-deriv}}{(\ref{eqn:time-non-res-symbol-deriv})}} A simple computation shows that $s\partial_s m_s$ also obeys symbol bounds of the form~\eqref{eqn:m-t-symbol-bounds}, so $\tilde{T}(s)$ obeys the same estimates as $T(s)$, giving us the bound for~\eqref{eqn:time-non-res-symbol-deriv}.

\subsubsection{The bound for~\texorpdfstring{\eqref{eqn:time-non-res-arg-deriv}}{(\ref{eqn:time-non-res-arg-deriv})}}  By differentiating $f$, $g$, and $h$ in time, we find that
\begin{equation*}\begin{split}
    e^{-s\partial_x^3}\partial_s f =& |u|^2 \partial_x u\\
    e^{-s\partial_x^3}\partial_s h =& |S|^2 \partial_x S + D_pS \partial_t \hat{u}(0,t)\\
    e^{-s\partial_x^3} \partial_s g =& |u|^2 \partial_x w + (w\overline{u} + u\overline{w}) \partial_x S - D_p S \partial_t \hat{u}(0,t)
\end{split}\end{equation*}
Thus, we can write
\begin{subequations}\begin{align}
    \mathring{T}(s) =& \sum_{j} T_{m_j}((|u|^2 \partial_x u), w, \overline{u})\label{eqn:G-i}\\
    &\qquad+ T_{m_j}(u, D_pS \hat{u}(0,t), \overline{u})\label{eqn:G-ii}\\
    &\qquad + T_{m_j}(u,|u|^2 \partial_x w, \overline{u}) \label{eqn:G-iii}\\
    &\qquad + T_{m_j}(u,u \partial_x S \overline{w}, \overline{u}) \label{eqn:G-iv}\\
    &+ \{\text{similar terms}\} \nonumber
\end{align}\end{subequations}
The desired bound on~\eqref{eqn:time-non-res-arg-deriv} will follow once we show that
\begin{equation*}
    \lVert \mathring{T}(s) \rVert_{L^2} \lesssim M^2\epsilon^2 s^{-2} \lVert xg \rVert_{L^2} + M^2\epsilon^3s^{-5/6 - \beta}
\end{equation*}
so it suffices to bound the quantities~\cref{eqn:G-i,eqn:G-ii,eqn:G-iii,eqn:G-iv} in $L^2$.  For~\eqref{eqn:G-i}, we can write
\begin{subequations}\begin{align}
    T_{m_j}\left((|u|^2\partial_x u)_{\lesssim j}, w_{\lesssim j}, \overline{u}_{\lesssim j}\right) =&  Q_{\sim j}T_{m_j}\left((|u|^2\partial_x u)_{\ll j}, w_{\sim j}, \overline{u}_{\ll j}\right)\label{eqn:time-non-res-arg-deriv-cubic-ll-sim-ll} \\
    &+  T_{m_j}\left((|u|^2\partial_x u)_{\lesssim j}, w_{\sim j}, \overline{u}_{\sim j}\right) \label{eqn:time-non-res-arg-deriv-cubic-lesssim-sim-sim} \\
    &+ T_{m_j}\left((|u|^2\partial_x u)_{\lesssim j}, w_{\ll j}, \overline{u}_{\sim j}\right)\label{eqn:time-non-res-arg-deriv-cubic-lesssim-ll-sim}\\
    &+ T_{m_j}\left((|u|^2\partial_x u)_{\sim j}, w_{\lesssim j}, \overline{u}_{\ll j}\right)\label{eqn:time-non-res-arg-deriv-cubic-sim-lesssim-ll}
\end{align}\end{subequations}
The term~\eqref{eqn:time-non-res-arg-deriv-cubic-ll-sim-ll} is similar to~\eqref{eqn:mS-SSw-i}:
we write
\begin{subequations}\begin{align}
    \eqref{eqn:time-non-res-arg-deriv-cubic-ll-sim-ll} =& Q_{\sim j}T_{m_j}(\chi_{< j - 30} (|u|^2\partial_x u)_{\ll j}, \chi_{< j - 20} w_{\sim j}, \overline{u}_{\ll j})\label{eqn:time-non-res-arg-deriv-cubic-ll-sim-ll-1}\\
    &+ Q_{\sim j}T_{m_j}(\chi_{\geq j - 30} (|u|^2\partial_x u)_{\ll j},  w_{\sim j}, \chi_{\geq j - 40} \overline{u}_{\ll j})\label{eqn:time-non-res-arg-deriv-cubic-ll-sim-ll-2}\\
    &+ \bnpl\notag
\end{align}\end{subequations}
Using~\eqref{eqn:cubic-deriv-bounds-compendium} to control the cubic $(|u|^2 \partial_x u)_{\ll j}$, we find that
\begin{equation*}\begin{split}
    \biggl\lVert \sum_{j}\eqref{eqn:time-non-res-arg-deriv-cubic-ll-sim-ll-1} \biggr\rVert_{L^2} \lesssim& \biggl( \sum_{2^j\gtrsim s^{-1/3}} \lVert (|u|^2\partial_x u)_{\ll j}\rVert_{L^\infty}^2 \lVert \chi_{< j - 20} w_{\sim j}\rVert_{L^2}^2 \lVert \overline{u}_{\ll j} \rVert_{L^\infty}^2\biggr)^{1/2}\\
    \lesssim& M^4\epsilon^4 s^{-2} \lVert xg \rVert_{L^2}
\end{split}\end{equation*}
as required.  Similar reasoning also gives us the bound
\begin{equation*}\begin{split}
    \biggl\lVert \sum_{j}\eqref{eqn:time-non-res-arg-deriv-cubic-ll-sim-ll-2} \biggr\rVert_{L^2} \lesssim& \biggl( \sum_{2^j\gtrsim s^{-1/3}} \lVert\chi_{\geq j - 30} (|u|^2\partial_x u)_{\ll j}\rVert_{L^\infty}^2 \lVert  w_{\sim j}\rVert_{L^2}^2 \lVert \chi_{\geq j - 40} \overline{u}_{\ll j} \rVert_{L^\infty}^2\biggr)^{1/2}\\
    \lesssim& M^4\epsilon^4 s^{-2} \lVert xg \rVert_{L^2}
\end{split}\end{equation*}
which gives us the required bound for~\eqref{eqn:time-non-res-arg-deriv-cubic-ll-sim-ll}.  Turning to~\eqref{eqn:time-non-res-arg-deriv-cubic-lesssim-sim-sim}, we find that
\begin{subequations}\begin{align}
    \eqref{eqn:time-non-res-arg-deriv-cubic-lesssim-sim-sim} =& T_{m_j}((|u|^2 \partial_x u)_{\lesssim j}, w_{\sim j},  (1 - \chi_{[j - 30 ,  j + 30]})\overline{u}_{\sim j})\label{eqn:time-non-res-arg-deriv-cubic-lesssim-sim-sim-1}\\
    &+ \chi_{[j - 40 ,  j + 40]}T_{m_j}(\chi_{[j - 40 ,  j + 40]}(|u|^2 \partial_x u)_{\lesssim j}, w_{\sim j},  \chi_{[j - 30 ,  j + 30]}\overline{u}_{\sim j})\label{eqn:time-non-res-arg-deriv-cubic-lesssim-sim-sim-2}\\
    &+ \bnpl\notag
\end{align}\end{subequations}
The term~\cref{eqn:time-non-res-arg-deriv-cubic-lesssim-sim-sim-1} is easily handled using linear and cubic dispersive estimates.
For the second term, the almost orthogonality of the terms gives
\begin{equation*}\begin{split}
    \biggl\lVert \sum_{j} \eqref{eqn:time-non-res-arg-deriv-cubic-lesssim-sim-sim-1} \biggr\rVert_{L^2}  \lesssim& \biggl( \sum_{2^j \gtrsim s^{-1/3}} \lVert \chi_{[j - 40 ,  j + 40]} (|u|^2 \partial_x u)_{\lesssim j}\rVert_{L^\infty}^2 \lVert w_{\sim j} \rVert_{L^2}^2\lVert {u}_{\sim j} \rVert_{L^\infty}^2 \biggr)^{1/2}\\
    \lesssim& M^4\epsilon^4 s^{-2} \lVert xg \rVert_{L^2}
\end{split}
\end{equation*}
which is sufficient, completing the bound for~\eqref{eqn:time-non-res-arg-deriv-cubic-lesssim-sim-sim}.  Similarly, for~\eqref{eqn:time-non-res-arg-deriv-cubic-lesssim-ll-sim}, we have that
\begin{subequations}\begin{align}
    \eqref{eqn:time-non-res-arg-deriv-cubic-lesssim-ll-sim} =& T_{m_j}\bigl( (|u|^2\partial_x u)_{\lesssim j}, w_{\ll j},  (1 - \chi_{[j - 30 ,  j + 30]})\overline{u}_{\sim j}\bigr)\label{eqn:time-non-res-arg-deriv-cubic-lesssim-ll-sim-1}\\
    &+\!\! \chi_{[j - 30 ,  j + 30]}T_{m_j}\bigl( \chi_{[j - 40 ,  j + 40]}(|u|^2\partial_x u)_{\lesssim j}, \chi_{[j - 40 ,  j + 40]}w_{\ll j},  \chi_{[j - 30 ,  j + 30]}\overline{u}_{\sim j}\bigr)\label{eqn:time-non-res-arg-deriv-cubic-lesssim-ll-sim-2}\\
    &{+} \bnpl\notag
\end{align}\end{subequations}
The estimate for~\eqref{eqn:time-non-res-arg-deriv-cubic-lesssim-ll-sim-1} is analogous to the one for~\eqref{eqn:time-non-res-arg-deriv-cubic-lesssim-sim-sim-1}, while for~\eqref{eqn:time-non-res-arg-deriv-cubic-lesssim-ll-sim-2}, we have
\begin{equation*}\begin{split}
    \biggl\lVert \sum_j \eqref{eqn:time-non-res-arg-deriv-cubic-lesssim-ll-sim-2} \biggr\rVert_{L^2} \lesssim& \biggl(\sum_{j} \lVert \chi_{[j - 40 ,  j + 40]}(|u|^2\partial_x u)_{\lesssim j}\rVert_{L^\infty}^2 \lVert \chi_{[j - 40 ,  j + 40]}w_{\ll j} \rVert_{L^2}^2 \lVert {u}_{\sim j}\rVert_{L^\infty}^2 \biggr)^{\frac{1}{2}}\\
    \lesssim& M^4\epsilon^4 s^{-2} \lVert xg \rVert_{L^2}^2
\end{split}\end{equation*}
Finally, for~\eqref{eqn:time-non-res-arg-deriv-cubic-sim-lesssim-ll} we find that
\begin{subequations}\begin{align}
    \eqref{eqn:time-non-res-arg-deriv-cubic-sim-lesssim-ll} =& \sum_{k < j - 30}\chi_{\sim k}T_{m_j}(\chi_k(|u|^2\partial_x u)_{\sim j}, \chi_{\sim k} w_{\sim j}, \chi_{\sim k} \overline{u}_{\ll j})\label{eqn:time-non-res-arg-deriv-cubic-sim-lesssim-ll-1}\\
    &+\!\! \chi_{[j - 40,  j + 40]}T_{m_j}(\chi_{[j - 30,  j + 30]}(|u|^2\partial_x u)_{\sim j}, \chi_{[j - 40,  j + 40]}w_{\lesssim j}, \chi_{[j - 40,  j + 40]}\overline{u}_{\ll j})\label{eqn:time-non-res-arg-deriv-cubic-sim-lesssim-ll-2}\\
    &+ T_{m_j}(\chi_{> j + 30}(|u|^2\partial_x u)_{\sim j}, w_{\lesssim j}, \chi_{> j + 20}\overline{u}_{\ll j})\label{eqn:time-non-res-arg-deriv-cubic-sim-lesssim-ll-3}\\
    &+ \bnpl\notag
\end{align}\end{subequations}
The estimate for~\eqref{eqn:time-non-res-arg-deriv-cubic-sim-lesssim-ll-2} is essentially the same as the one for~\eqref{eqn:time-non-res-arg-deriv-cubic-lesssim-ll-sim-2}, and the term~\eqref{eqn:time-non-res-arg-deriv-cubic-sim-lesssim-ll-3} can be bounded in essentially the same manner as~\eqref{eqn:time-non-res-arg-deriv-cubic-lesssim-ll-sim-1} and~\eqref{eqn:time-non-res-arg-deriv-cubic-lesssim-sim-sim-1}, so it only remains to bound~\eqref{eqn:time-non-res-arg-deriv-cubic-sim-lesssim-ll-1}.  For this term, the refined cubic estimate~\eqref{eqn:cubic-sim-w-deriv-small} gives us the bound
\begin{equation*}
    \lVert \chi_k (|u|^2\partial_x u)_{\sim j} \rVert_{L^\infty} \lesssim M^3\epsilon^3 s^{-11/6} 2^{-j/2} 2^{-k}
\end{equation*}
from which we deduce that
\begin{equation*}\begin{split}
    \lVert \eqref{eqn:time-non-res-arg-deriv-cubic-sim-lesssim-ll-1} \rVert_{L^2} 
    \lesssim& M^4\epsilon^4 s^{-13/6} 2^{-j/2} \lVert xg \rVert_{L^2}
\end{split}\end{equation*}
which completes the estimate for~\eqref{eqn:G-i}.

The term~\eqref{eqn:G-ii} can be controlled using the $L^p$ estimates for $D_p S$.  Let us write
\begin{subequations}\begin{align}
    T_{m_j}(u_{\lesssim j},  Q_{\lesssim j} D_p S \partial_t \hat{u}(0,t), \overline{u}_{\lesssim j}) =& T_{m_j}(u,  Q_{\sim j} D_p S \partial_t \hat{u}(0,t), \overline{u})\label{eqn:G-ii-1}\\
    &+ T_{m_j}(u_{\sim j},  Q_{\ll j} D_p S \partial_t \hat{u}(0,t), \overline{u})\label{eqn:G-ii-2}\\
    &+ T_{m_j}(u_{\ll j},  Q_{\ll j} D_p S \partial_t \hat{u}(0,t), \overline{u}_{\sim j})\label{eqn:G-ii-3}
\end{align}\end{subequations}
For~\eqref{eqn:G-ii-1}, we use the $L^6$ estimates for $u$ and $Q_{\sim j} (D_p S)$ (equations~\eqref{eqn:lp-decay} and~\eqref{eqn:D-p-S-L-p-freq-loc}, respectively) to obtain the bound
\begin{equation*}\begin{split}
    \biggl\lVert \sum_j\eqref{eqn:G-ii-1} \biggr\rVert_{L^2} \lesssim &  \sum_{2^j \gtrsim s^{-1/3}}\lVert u \rVert_{L^6}^2 \lVert Q_{\sim j} D_p S \rVert_{L^6} |\partial_s \hat{u}(0,s)|\\
    \lesssim& M^5 \epsilon^5 s^{-11/6 - \beta}
\end{split}\end{equation*}
Similar reasoning using the $L^6$ bounds for $u_{\sim j}$ and $Q_{\ll j} D_p S$ (\eqref{eqn:freq-loc-lp-decay} and~\eqref{eqn:D-p-S-L-p}) yields
\begin{equation*}\begin{split}
    \biggl\lVert \sum_{j} \eqref{eqn:G-ii-2} + \eqref{eqn:G-ii-3} \biggr\rVert_{L^2}
    \lesssim& M^5 \epsilon^5 s^{-11/6 - \beta}
\end{split}\end{equation*}

Let us now consider the term~\eqref{eqn:G-iii}.  The estimates here require us to obtain bounds for cubic expressions of the form $|u|^2 \partial_x w$.  Since $g \not\in X$, these bounds do not immediately follow from the work in~\Cref{sec:cubic-bounds}, so we instead argue directly. By combining estimates from~\eqref{eqn:u-lin-ests} and~\eqref{eqn:w-lin-ests-L-2}, we see that
\begin{equation}\label{eqn:cubic-u2-w-space-loc-bound}\begin{split}
	\lVert \chi_k |u|^2 \partial_x w \rVert_{L^\infty} \lesssim M^2 \epsilon^2 s^{-3/2} \lVert xg \rVert_{L^2}c_k
\end{split}\end{equation}
which immediately implies that
\begin{equation}\label{eqn:cubic-u2-w-bound}\begin{split}
	\lVert |u|^2 \partial_x w \rVert_{L^\infty}
	\lesssim& M^2 \epsilon^2 t^{-3/2} \lVert xg \rVert_{L^2}
\end{split}\end{equation}
We will also need refined estimates in the spirit of~\eqref{eqn:cubic-sim-with-k} for $\chi_k (|u|^2\partial_x w)_{\sim j}$ when $k < j - 30$.  To obtain these bounds, we perform a Littlewood-Paley decomposition to write
\begin{equation*}\begin{split}
    \chi_k (|u|^2\partial_x w)_{\sim j} =& \chi_k Q_{\sim j} \chi_{\sim k} \bigl( |u_{< j - 20}|^2 \partial_x w_{[j - 20 ,  j + 20]}\bigr)\\
    &+ 2\chi_k Q_{\sim j} \chi_{\sim k} \bigl( \Re(u_{< j - 20} \overline{u}_{[j - 20 ,  j + 20]}) \partial_x w_{< j - 20} \bigr)  + \bbetter
\end{split}\end{equation*}
Here, $\bbetter$ denotes more rapidly decaying terms which can be neglected.  The leading order terms we estimate as
\begin{equation*}\begin{split}
    \lVert \chi_{\sim k} |u_{< j - 20}|^2 \partial_x w_{[j - 20,  j + 20]} \rVert_{L^\infty} \lesssim& M^2 \epsilon^2 s^{-11/6} 2^{-k} \lVert xg \rVert_{L^2} c_j\\
    \lVert \chi_{\sim k} u_{< j - 20} \overline{u}_{[j - 20,  j + 20]} \partial_x w_{< j - 20} \rVert_{L^\infty} \lesssim& M^2\epsilon^2 s^{-11/6} 2^{k/2 - 3/2 j} \lVert xg \rVert_{L^2}
\end{split}\end{equation*}
so
\begin{equation}\label{eqn:cubic-u2-w-refined-bound}
    \lVert \chi_k (|u|^2\partial_x w)_{\sim j} \rVert_{L^\infty} \lesssim M^2 \epsilon^2 s^{-11/6} 2^{-k} \lVert xg \rVert_{L^2}c_j
\end{equation}
where we have used the fact that $2^{3/2(k - j)}$ is $\ell^2$ in $j$ for $k < j - 30$.  Now, let us write
\begin{subequations}\begin{align}
    T_{m_j}(u, |u|^2\partial_x w, \overline{u}) =& T_{m_j}(u_{\sim j}, |u|^2\partial_x w, \overline{u}_{\lesssim j})\label{eqn:G-iii-1}\\
    & + T_{m_j}(u_{\ll j}, |u|^2\partial_x w, \overline{u}_{\sim j})\label{eqn:G-iii-2}\\
    & + Q_{\sim j}T_{m_j}(u_{\ll j}, (|u|^2\partial_x w)_{\sim j}, \overline{u}_{\ll j})\label{eqn:G-iii-3}
\end{align}\end{subequations}
For the term~\eqref{eqn:G-iii-1}, we perform the further decomposition
\begin{subequations}\begin{align}
    \eqref{eqn:G-iii-1} =& \chi_{[j - 40,  j + 40]} T_{m_j}(u_{\sim j}, \chi_{[j - 40,  j + 40]} |u|^2\partial_x w,  \chi_{[j - 30,  j + 30]}\overline{u}_{\lesssim j})\label{eqn:G-iii-1-1}\\
    &+ T_{m_j}((1- \chi_{[j - 20,  j + 20]})u_{\sim j}, |u|^2\partial_x w, (1- \chi_{[j - 30,  j + 30]})\overline{u}_{\lesssim j})\label{eqn:G-iii-1-2}\\
    &+ \bnpl\notag
\end{align}\end{subequations}
For the subterm~\eqref{eqn:G-iii-1-2}, the $L^4$ estimates from~\eqref{eqn:u-lin-ests} together with~\eqref{eqn:cubic-u2-w-bound} imply
\begin{equation*}\begin{split}
    \lVert \eqref{eqn:G-iii-1-2} \rVert_{L^2} 
    \lesssim& M^4\epsilon^4 s^{-13/6} 2^{-j/2} \lVert xg \rVert_{L^2}
\end{split}\end{equation*}
which is acceptable.  Turning to the subterm~\eqref{eqn:G-iii-1-1}, we use~\eqref{eqn:cubic-u2-w-space-loc-bound} to find that
\begin{equation*}\begin{split}
    \biggl\lVert \sum_j \eqref{eqn:G-iii-1-1} \biggr\rVert_{L^2} \lesssim& \biggl( \sum_{2^j \gtrsim s^{-1/3}} \lVert u_{\sim j} \rVert_{L^4}^2 \lVert \chi_{[j - 40 ,  j + 40]}|u|^2 \partial_x w \rVert_{L^\infty}^2 \lVert \chi_{[j - 30,  j + 30]} u_{\lesssim j} \rVert_{L^4}^2 \biggr)^{1/2}\\
    \lesssim& M^4\epsilon^4 s^{-2} \lVert xg \rVert_{L^2}
\end{split}\end{equation*}
which completes the bound for~\eqref{eqn:G-iii-1}.  The bound for~\eqref{eqn:G-iii-2} is identical.  
For~\eqref{eqn:G-iii-3}, we write
\begin{subequations}\begin{align}
    \eqref{eqn:G-iii-3} =& Q_{\sim j}\sum_{k < j - 30}T_{m_j}(\chi_{\sim k}u_{\ll j}, \chi_k(|u|^2\partial_x w)_{\sim j}, \chi_{\sim k} \overline{u}_{\ll j})\label{eqn:G-iii-3-1}\\
    &+ Q_{\sim j}T_{m_j}(\chi_{[j - 40 ,  j + 40]}u_{\ll j}, \chi_{[j - 30 ,  j + 30]}(|u|^2\partial_x w)_{\sim j}, \chi_{[j - 40 ,  j + 40]} \overline{u}_{\ll j})\label{eqn:G-iii-3-2}\\
    &+ Q_{\sim j}T_{m_j}(\chi_{> j + 20}u_{\ll j}, \chi_{> j + 30}(|u|^2\partial_x w)_{\sim j}, \chi_{> j + 20}\overline{u}_{\ll j})\label{eqn:G-iii-3-3}\\
    &+ \bnpl\notag
\end{align}\end{subequations}
For the first subterm, we use the refined cubic bound~\eqref{eqn:cubic-u2-w-refined-bound} to conclude that
\begin{equation*}\begin{split}
    \left\lVert \sum_j \eqref{eqn:G-iii-3-1} \right\rVert_{L^2} \lesssim& \left(\sum_{j} \left(\sum_{k < j - 30} \lVert \chi_{\sim k}u_{\ll j}\rVert_{L^4}^2 \lVert \chi_k(|u|^2\partial_x w)_{\sim j}\rVert_{L^\infty}\right)^2 \right)^{1/2}\\
    \lesssim& M^4\epsilon^4 s^{-2} \lVert xg \rVert_{L^2}
\end{split}\end{equation*}
For the second term, we use the bound~\eqref{eqn:cubic-u2-w-space-loc-bound} to control $\chi_{[j - 30 ,  j + 30]} |u|^2 \partial_x w$, yielding
\begin{equation*}\begin{split}
    \left\lVert \sum_{j} \eqref{eqn:G-iii-3-2} \right\rVert_{L^2} \lesssim& \left(\sum_{2^j \gtrsim s^{-1/3}} \lVert \chi_{[j - 40,  j + 40]} u_{\ll j} \rVert_{L^4}^4 \lVert \chi_{[j - 30,  j + 30]}(|u|^2 \partial_x w)_{\sim j} \rVert_{L^\infty}^2 \right)^{1/2}\\
    \lesssim& M^4\epsilon^4 s^{-2} \lVert xg \rVert_{L^2}
\end{split}\end{equation*}
Finally, for the last term, we use the refined $L^4$ bounds for $\chi_{> j + 20} u_{\ll j}$ given in~\eqref{eqn:u-lin-ests} to conclude that
\begin{equation*}\begin{split}
    \lVert \eqref{eqn:G-iii-3-1} 
    \lesssim& M^4\epsilon^4 s^{-8/3} 2^{-2j} \lVert xg \rVert_{L^2}
\end{split}\end{equation*}
which completes the bound for~\eqref{eqn:G-iii}.  The argument for~\eqref{eqn:G-iv} is similar once we establish bounds for $u \partial_x S \overline{w}$ similar to the above bounds for $|u|^2 \partial_x w$.  The decay estimates given in~\Cref{thm:lin-decay-lemma} and \Cref{thm:w-lin-decay} immediately imply that
\begin{equation*}\begin{split}
    \lVert \chi_k u \partial_x S \overline{w} \rVert_{L^\infty} \lesssim& M\epsilon^2 s^{-3/2} \lVert xg \rVert_{L^2} c_k\\
    \lVert u \partial_x S \overline{w} \rVert_{L^\infty} \lesssim& M\epsilon^2 s^{-3/2} \lVert xg \rVert_{L^2}
\end{split}\end{equation*}
which are analogous to the estimates~\eqref{eqn:cubic-u2-w-space-loc-bound} and~\eqref{eqn:cubic-u2-w-bound}.  Moreover, we see that for $k < j - 30$, we can write
\begin{equation*}\begin{split}
    \chi_k (u \partial_x S \overline{w})_{\sim j} =& \chi_k Q_{\sim j} \chi_{\sim k} \bigl(u_{< j - 20} \partial_x S_{< j - 20} \overline{w}_{[j - 20 ,  j + 20]}\}\bigr) + \\
    &+\!\chi_k Q_{\sim j} \chi_{\sim k}\bigl( (u_{< j - 20} \partial_x S_{[j - 20,  j + 20]} + u_{[j - 20 ,  j + 20]} \partial_x S_{< j - 20}) \overline{w}_{< j - 20}\bigr)\\
    &+ \bbetter
\end{split}\end{equation*}
Applying the decay estimates from~\eqref{eqn:u-lin-ests} and~\eqref{eqn:w-lin-ests-L-inf}, we obtain the bounds
\begin{equation*}\begin{split}
    \lVert \chi_k u_{< j - 20} \partial_x S_{< j - 20} w_{[j - 20,  j + 20]} \rVert_{L^\infty} \lesssim& M\epsilon^2 s^{-11/6} 2^{-j} \lVert xg \rVert_{L^2} c_j\\
    \lVert \chi_k u_{< j - 20} \partial_x S_{[j - 20,  j + 20]} w_{< j - 20} \rVert_{L^\infty} \lesssim& M\epsilon^2 s^{-\frac{11}{6}} 2^{-\frac{j+k}{2}} \lVert xg \rVert_{L^2} c_k
\end{split}\end{equation*}
with an identical estimate holding for $u_{[j - 20 ,  j + 20]} \partial_x S_{< j - 20} w_{< j - 20}$.  Thus, after noting that the restriction $k < j - 30$ allows us to write ${2^{-\frac{j+k}{2}} = 2^{-j} c_j}$ uniformly in $k$, we find that
\begin{equation*}\begin{split}
    \lVert \chi_k (u \partial_x S \overline{w})_{\sim j} \rVert_{L^\infty} \lesssim& M\epsilon^2 s^{-\frac{11}{6}} 2^{-j}  \lVert xg \rVert_{L^2} c_j
\end{split}\end{equation*}
which is better than~\eqref{eqn:cubic-u2-w-refined-bound}, since $k < j - 30$.  Using these bounds and arguing as in~\eqref{eqn:G-iii}, we see that~\eqref{eqn:G-iv} satisfies the necessary estimates.  Combining all these bounds, we have shown that
\begin{equation*}
	\lVert \mathring{T}(s) \rVert_{L^2} \lesssim M^4\epsilon^4 s^{-2} \lVert xg \rVert_{L^2} + M^5\epsilon^5 t^{-11/6 - \beta}
\end{equation*}

\subsubsection{The bound for~\texorpdfstring{\eqref{eqn:time-non-inner-product-deriv}}{(\ref{eqn:time-non-inner-product-deriv})}} The only remaining term is~\eqref{eqn:time-non-inner-product-deriv}.  Expanding $x \partial_s g$ using~\eqref{eqn:xg-division} gives
\begin{equation*}\begin{split}
    \eqref{eqn:time-non-inner-product-deriv} =& -\Im \int_1^t s^2 \Big\langle T_{\partial_\xi \phi}(u, \partial_x w, \overline{u}) + T_{\partial_\xi \phi}(u, \partial_x S, \overline{w}) + T_{\partial_\xi \phi}(w, \partial_x S, \overline{u}),T(s) \Big\rangle\,ds\\
    &+\Re \int_1^t s \Big\langle |u|^2 \partial_x Lw, T(s) \Big\rangle\,ds\\
    &+\Re \int_1^t s \Big\langle |u|^2 w + 2\Re(u\overline{w})S + \Re(u\overline{w}) \partial_x LS, T(s) \Big\rangle\,ds\\
    &+ \Re \int_1^t s \Big\langle LD_p S \partial_s \hat{u}(0,s), T(s) \Big\rangle\,ds
\end{split}\end{equation*}
Expanding the pseudoproducts via~\eqref{eqn:basic-STR-division} and noticing that $\cramped{-i(\xi - \eta - \sigma) \partial_\xi \phi \chi^\cT_s = \phi m_s}$, we write
\begin{subequations}\begin{align}
    \begin{split}\eqref{eqn:time-non-inner-product-deriv} =& \Im \int_1^t  s^2  \Big\langle T_{\phi m_s}(u,w, \overline{u}) + T_{\phi m_s}(u, S, \overline{w}) +  T_{\phi m_s}(w, S, \overline{u}), T(s) \Big\rangle\,ds\end{split}\label{eqn:TNR-IPD-time-pseudoprod-cancel}\\%
    &-\Re \int_1^t s \Big\langle |u|^2 \partial_x Lw, T(s) \Big\rangle\,ds\label{eqn:TNR-IPD-bad-cubic}\\
    &-\Re \int_1^t s \Big\langle H(s), T(s) \Big\rangle\,ds\label{eqn:TNR-IPD-H}\\
    &- \Re \int_1^t s\Big\langle L D_p S \partial_s \hat{u}(0,s), T(s) \Big\rangle\,ds\label{eqn:TNR-IPD-LD-p-S}
\end{align}\end{subequations}
where
\begin{equation*}\begin{split}
    H(s) =& e^{-s\partial_x^3} \left(T_{\partial_\xi \phi e^{is\phi}\chi^\cR_s}(f, \partial_x g, \overline{f}) + T_{\partial_\xi \phi e^{is\phi}\chi^\cR_s}(f, \partial_x h, \overline{g}) + T_{\partial_\xi \phi e^{is\phi} \chi^\cR_s}(g, \partial_x h, \overline{f}) \right)\\
    &+ e^{-s\partial_x^3} \left(T_{\partial_\xi \phi e^{is\phi}\chi^\cS_s}(f, \partial_x g, \overline{f}) + T_{\partial_\xi \phi e^{is\phi}\chi^\cS_s}(f, \partial_x h, \overline{g}) + T_{\partial_\xi \phi e^{is\phi} \chi^\cS_s}(g, \partial_x h, \overline{f}) \right)\\
    &+ |u|^2 w + 2\Re(u\overline{w})S + 2\Re(u\overline{w})LS
\end{split}\end{equation*}
Using the arguments from the previous subsections, we conclude that
\begin{equation*}
    \lVert H(s) \rVert_{L^2} \lesssim M^2\epsilon^2 s^{-1} \lVert xg(s) \rVert_{L^2}
\end{equation*}
so~\eqref{eqn:T-t-bound} and the Cauchy-Schwarz inequality give the bound
\begin{equation*}
    \left| \eqref{eqn:TNR-IPD-H} \right| \lesssim \int_1^t M^4\epsilon^4s^{-1} \lVert xg(s) \rVert_{L^2}^2\,ds
\end{equation*}
Similar reasoning using~\eqref{eqn:L-D-p-S-hat-u-bd} shows that
\begin{equation*}
    \left| \eqref{eqn:TNR-IPD-LD-p-S} \right| \lesssim \int_1^t  M^7\epsilon^7s^{-5/6-\beta} \lVert xg(s) \rVert_{L^2}\,ds
\end{equation*}
Turning to the term~\eqref{eqn:TNR-IPD-bad-cubic}, we note that
\begin{equation*}
    \left\langle |u|^2 \partial_x Lw, T(s)\right\rangle = - \left\langle Lw, \partial_x \left(|u|^2 T(s)\right)\right\rangle
\end{equation*}
so the desired bound will follow immediately if we can show that
\begin{equation*}
    \lVert \partial_x \bigl(|u|^2 T(s)\bigr) \rVert_{L^2} \lesssim M^4\epsilon^4 s^{-2} \lVert xg \rVert_{L^2}
\end{equation*}
Using~\Cref{thm:simple-bilinear-decay} and the estimates in~\Cref{sec:T-m-s-bounds}, we see that
\begin{equation*}
    \lVert \left(\partial_x |u|^2\right) T(s) \rVert_{L^2} \lesssim M^4\epsilon^4 s^{-2} \lVert xg \rVert_{L^2}
\end{equation*}
so it suffices to prove that
\begin{equation*}
    \left\lVert |u|^2 \partial_x T(s) \right\rVert_{L^2} \lesssim M^4\epsilon^4 s^{-2} \lVert xg \rVert_{L^2}
\end{equation*}
We will show how to obtain the bound for $|u|^2 \partial_x T_{m_s}(u, w,\overline{u})$: the bounds for the other terms in $T(s)$ are similar.  Recalling that $m_s = \frac{i(\xi-\eta-\sigma) \partial_\xi \phi \chi_s^\cT}{\phi}$, we can write
\begin{subequations}\begin{align}
    |u|^2\partial_x T_{m_s}(u,w,\overline{u}) =& \sum_{2^j \gtrsim s^{-1/3}} |u|^2 T_{m_j}(u, \partial_x w, \overline{u})\notag\\
    =&\sum_{2^j \gtrsim s^{-1/3}} |u|^2 T_{m_j}(u_{\sim j}, \partial_x w_{\lesssim j}, \overline{u})\label{eqn:TNR-IPD-BC-1}\\
    &\qquad+ |u|^2 T_{m_j}(u_{\ll j}, \partial_x w_{\lesssim j}, \overline{u}_{\sim j})\label{eqn:TNR-IPD-BC-2}\\
    &\qquad+ |u|^2 Q_{\sim j} T_{m_j}(u_{\ll j}, \partial_x w_{\sim j}, \overline{u}_{\ll j})\label{eqn:TNR-IPD-BC-3}
\end{align}\end{subequations}
For~\eqref{eqn:TNR-IPD-BC-1}, we perform a further division in physical space
\begin{subequations}\begin{align}
    \eqref{eqn:TNR-IPD-BC-1} =& \sum_{j}\sum_{k < j - 30} \chi_k |u|^2 T_{m_j}(\chi_{\sim k} u_{\sim j}, \chi_{\sim k}\partial_x w_{\lesssim j}, \chi_{\sim k}\overline{u})\label{eqn:TNR-IPD-BC-1-1}\\
    &+\sum_{j}\chi_{[j - 30 ,  j + 30]} |u|^2 T_{m_j}( u_{\sim j}, \chi_{[j - 40 ,  j + 40]}\partial_x w_{\lesssim j}, \chi_{[j - 40 ,  j + 40]}\overline{u})\label{eqn:TNR-IPD-BC-1-2}\\
    &+ \sum_{j} \chi_{> j + 30} |u|^2 T_{m_j}(\chi_{> j + 20} u_{\sim j}, \chi_{> j + 20} \partial_x w_{\lesssim j}, \chi_{> j + 20} \overline{u})\label{eqn:TNR-IPD-BC-1-3}\\
    &+ \bnpl\notag
\end{align}\end{subequations}
We will focus on\eqref{eqn:TNR-IPD-BC-1-2}, since the bounds for \eqref{eqn:TNR-IPD-BC-1-1} and \eqref{eqn:TNR-IPD-BC-1-3} are easier.  Here, we can use almost orthogonality to obtain
\begin{equation*}\begin{split}
    \biggl\lVert \sum_j \eqref{eqn:TNR-IPD-BC-1-2} \biggr\rVert_{L^2} \lesssim& \biggl(\sum_{j} \lVert \chi_{[j - 40 ,  j + 40]} u \rVert_{L^\infty}^6\lVert u_{\sim j}\rVert_{L^\infty}^2 \lVert \chi_{[j - 40 ,  j + 40]}\partial_x w_{\lesssim j} \rVert_{L^2}^2 \biggr)^{1/2}\\
    \lesssim& M^4\epsilon^4 s^{-2} \lVert xg \rVert_{L^2}
\end{split}\end{equation*}
Since $u_{\ll j}$ satisfies better decay estimates that $u$, we can bound~\eqref{eqn:TNR-IPD-BC-2} in the same way. For~\eqref{eqn:TNR-IPD-BC-3}, we write
\begin{subequations}\begin{align}
    \eqref{eqn:TNR-IPD-BC-3} =& \sum_{j}\sum_{k < j - 30} \chi_k |u|^2 Q_{\sim j} T_{m_j}(\chi_{\sim k} u_{\ll j}, \chi_{\sim k} \partial_x w_{\sim j}, \chi_{\sim k} \overline{u}_{\ll j})\label{eqn:TNR-IPD-BC-3-1}\\
    &+ \sum_{j} \chi_{[j - 30 ,  j + 30]} |u|^2  T_{m_j}(\chi_{[j - 40 ,  j + 40]} u_{\ll j}, \partial_x w_{\sim j}, \chi_{[j - 40 ,  j + 40]} \overline{u}_{\ll j})\label{eqn:TNR-IPD-BC-3-2}\\
    &+ \sum_{j}\chi_{> j + 30} |u|^2 T_{m_j}(\chi_{> j + 20} u_{\ll j}, \chi_{> j - 20}\partial_x w_{\sim j}, \chi_{> j + 20} \overline{u}_{\ll j})\label{eqn:TNR-IPD-BC-3-3}\\
    &+ \bnpl\notag
\end{align}\end{subequations}
We will show how to handle the first term; the remaining terms are easier.  Note that we can interchange the order of summation to obtain
\begin{equation*}
    \biggl\lVert \eqref{eqn:TNR-IPD-BC-3-1} \biggr\rVert_{L^2} = \biggl\lVert \sum_k \chi_k |u|^2 \sum_{j > k + 30} Q_{\sim j} T_{m_j}(\chi_{\sim k} u_{\ll j}, \chi_{\sim k} \partial_x w_{\sim j}, \chi_{\sim k} \overline{u}_{\ll j}) \biggr\rVert_{L^2}
\end{equation*}
Thus, using the almost orthogonality in $j$ coming from the frequency projection, we find that
\begin{equation*}\begin{split}
    \biggl\lVert \eqref{eqn:TNR-IPD-BC-3-1} \biggr\rVert_{L^2} \lesssim& \sum_k \lVert \chi_{\sim k} u \rVert_{L^\infty}^2  \biggl(\sum_{j > k + 30} \lVert \chi_{\sim k} u_{\ll j} \rVert_{L^4}^4  \lVert \chi_{\sim k} \partial_x w_{\sim j} \rVert_{L^\infty}^2 \biggr)^{1/2}\\
    \lesssim& M^4\epsilon^4 s^{-2} \lVert xg \rVert_{L^2}
\end{split}\end{equation*}

It only remains to bound the contribution from~\eqref{eqn:TNR-IPD-time-pseudoprod-cancel}. Observe that
\begin{equation*}\begin{split}
    \frac{1}{2}\partial_s \lVert T(s) \rVert_{L^2}^2 =& \Re \left\langle \partial_s T(s), T(s)\right\rangle\\
    =& \Re \left\langle e^{-s\partial_x^3} \partial_s e^{s\partial_x^3} T(s), T(s)\right\rangle - \Re \left\langle e^{-s\partial_x^3} \partial_x^3 e^{s\partial_x^3} T(s), T(s)\right\rangle
\end{split}\end{equation*}
The second term on the last line vanishes because $\partial_x^3$ is a skew-adjoint operator.  Noting that
\begin{equation*}
    e^{s\partial_x^3}T(s) = T_{m_se^{is\phi}}(f,g,\overline{f}) + T_{m_se^{is\phi}}(f,h,\overline{g}) + T_{m_se^{is\phi}}(g,h,\overline{f})
\end{equation*}
we see that
\begin{equation}\label{eqn:time-double-pseudoprod-cancellation}\begin{split}
    \frac{1}{2}\partial_s \lVert T(s) \rVert_{L^2}^2 =& \Re\bigl\langle T_{m_s\partial_s e^{is\phi}}(f,g,\overline{f}) + T_{m_s \partial_s e^{is\phi}}(f,h,\overline{g}) \\
    &\qquad\quad+ T_{m_s \partial_s e^{is\phi}}(g,h,\overline{f}), e^{s\partial_x^3}T(s) \bigr\rangle + \Re\bigl\langle \tilde{T}(s) + \mathring{T}(s), T(s) \bigr\rangle\\
\end{split}\end{equation}
Now, 
\begin{equation*}
    -i\partial_\xi \phi (\xi - \eta - \sigma) e^{is\phi} \chi^\cT_s = \phi m_s e^{is\phi} = -im_s \partial_s e^{is\phi}
\end{equation*}
so we can use~\eqref{eqn:time-double-pseudoprod-cancellation} to write
\begin{subequations}\begin{align}
    \eqref{eqn:TNR-IPD-time-pseudoprod-cancel} =& - \frac{1}{2} \int_1^t s^2 \partial_s\lVert T(s) \rVert_{L^2}^2 \,ds\label{eqn:TNR-IPD-C-1}\\
    &+ \Re \int_1^t s^2 \langle \tilde{T}(s) + \mathring{T}(s), T(s) \rangle\,ds\label{eqn:TNR-IPD-C-2}
\end{align}\end{subequations}
The bounds for $T(s)$, $\tilde{T}(s)$, and $\mathring{T}(s)$ imply that
\begin{equation*}\begin{split}
    |\eqref{eqn:TNR-IPD-C-2}| 
    \lesssim& \int_1^t M^4\epsilon^4 s^{-1}\lVert xg \rVert_{L^2}^2 + M^4\epsilon^5 s^{-5/6-\beta}\lVert xg \rVert_{L^2}\,ds
\end{split}\end{equation*}
Turning to~\eqref{eqn:TNR-IPD-C-1}, integration by parts shows that
\begin{equation*}\begin{split}
    |\eqref{eqn:TNR-IPD-C-1}| =& \left|- \left.\frac{1}{2}s^2 \lVert T(s) \rVert_{L^2}^2 \right|_{s=1}^{s=t} + \int_1^t s \lVert T(s)\rVert_{L^2}^2 \,ds\right|\\
    \lesssim& M^4\epsilon^4 \lVert xg \rVert_{L^2}^2 + \epsilon^6 + \int_1^t M^4\epsilon^4 s^{-1} \lVert xg (s) \rVert_{L^2}^2\,ds
\end{split}\end{equation*}
completing the bound for~\eqref{eqn:time-non-inner-product-deriv}.

\section{The pointwise bounds in Fourier space}\label{sec:L-infty-est}

In this section, we will show how to control $\hat{f}(\xi, t)$ and $\partial_t \hat{u}(0,t)$.  For $|\xi| < t^{-1/3}$, the H\"older continuity of $\hat{f}$ in $\xi$ reduces to showing that $\hat{f}(0,t) = O(\epsilon)$, which will follow once we show that~\eqref{eqn:desired-zero-mode-conv} holds for $\partial_t \hat{u}(0,t)$.  We show this improved decay by taking advantage of the self-similar structure at low frequencies.  On the other hand, when $|\xi| \geq t^{-1/3}$, we show that $\hat{f}(\xi,t)$ essentially has ODE dynamics, which produce the logarithmic phase correction given by~\eqref{eqn:phase-rot-dynamics-f-hat}. 

\subsection{The low-frequency bounds}\label{sec:low-freq-bdds}

We first prove the bound~\eqref{eqn:desired-zero-mode-conv} on $|\partial_t\hat{u}(0,t)|$.  Recall that
\begin{equation*}
    \partial_t \hat{u}(0,t) = -\frac{1}{\sqrt{2\pi}}\int |u|^2 \partial_x u \,dx
\end{equation*}
Using the self-similar equation~\eqref{eqn:cmkdv-self-sim-prof-expanded}, we see that $|S|^2 \partial_x S$ is the derivative of an $L^2$ function.  Thus, $\int |S|^2 \partial_x S\,dx = 0$, and we can write
\begin{equation*}\begin{split}
    \partial_t \hat{u}(0,t) =& -\frac{1}{\sqrt{2\pi}}\int |u|^2 \partial_x u - |S|^2 \partial_x S\,dx\\
    =& -\frac{1}{\sqrt{2\pi}} \int |u|^2 \partial_x w + (u \overline{w} + w\overline{u}) \partial_x S\,dx
\end{split}\end{equation*}
But, all of these terms can be controlled using~\Cref{thm:cubic-mean-thm}, giving us the bound
\begin{equation*}\begin{split}
    |\partial_t \hat{u}(0,t)| \lesssim& M^2 \epsilon^2 t^{-7/6} \lVert xg \rVert_{L^2}\\
    \lesssim& M^2 \epsilon^3 t^{-1-\beta}
\end{split}\end{equation*}
which is exactly what is required for~\eqref{eqn:desired-zero-mode-conv}.

We now turn to the task of bounding $\hat{f}(\xi,t)$ in the low frequency region $|\xi| < t^{-1/3}$.  Note that $\partial_t \hat{u}(0,t) = \partial_t \hat{f}(0,t)$, so integrating the above bound and recalling that $M^2\epsilon^2 \ll 1$ gives us the bound
\begin{equation*}
    |\hat{f}(0,t)| \lesssim |\hat{f}(0,1)| + \int_1^t |\partial_t \hat{u}(0,t)|\,dt
    \lesssim \epsilon
\end{equation*}
as required.  For the other frequencies, we note that by the Sobolev-Morrey embedding, for $|\xi| < t^{-1/3}$,
\begin{equation*}\begin{split}
    |\hat{f}(\xi,t) - \hat{f}(0,t)| \lesssim& |\xi|^{1/2} \lVert xf \rVert_{L^2}\\
    \lesssim& t^{-1/6} \left(\lVert xg \rVert_{L^2} + \lVert LS \rVert_{L^2}\right)\\
    \lesssim& \epsilon
\end{split}\end{equation*}
allowing us to conclude that $\hat{f}$ is bounded in the region $|\xi| < t^{-1/3}$.

\subsection{The perturbed Hamiltonian dynamics}\label{sec:st-phase-setup}
In this section, we show that $\hat f(\xi,t)$ satisfies a perturbed Hamiltonian ODE for each $|\xi| \geq t^{-1/3}$, and as a consequence $\lVert \hat f \rVert_{L^\infty}$ is uniformly bounded in time.  In particular, we will show that for $|\xi| \geq t^{-1/3}$, $\hat{f}(\xi,t)$ satisfies
\begin{equation}\label{eqn:hat-f-hamiltonian-dynamics}
    \partial_t \hat{f}(\xi, t) = -\frac{i\sgn \xi}{6t} | \hat{f}(\xi, t)|^2 \hat{f}(\xi, t) + ce^{it\frac{8}{9}\xi^3}\frac{\sgn \xi}{t} | \hat{f}(\xi/3, t)|^2 \hat{f}(\xi/3, t) + R(\xi,t)
\end{equation}
for some constant $c$, where $|R(\xi, t)| \lesssim M^3\epsilon^3 t^{-1} (|\xi| t^{1/3})^{-1/14}$.  Assuming for the moment that~\eqref{eqn:hat-f-hamiltonian-dynamics} holds, we define
\begin{equation*}
    B(t,\xi) := -\frac{\sgn(\xi)}{6} \int_{1}^t \frac{|\hat f(\xi,s)|^2}{s}\,ds
\end{equation*}
so $v(t,\xi) = e^{iB(t,\xi)} \hat{f}(t,\xi)$ satisfies
\begin{equation*}
    \partial_t v = ce^{it\frac{8}{9}\xi^3}e^{iB(t,\xi)}\frac{\sgn \xi}{t} | \hat{f}(\xi/3, t)|^2 \hat{f}(\xi/3, t) + e^{iB(t,\xi)}R(\xi,t)
\end{equation*}
Let us consider $|v(t_1) - v(t_2)|$ for $\max(1, |\xi|^{-3}) \leq t_1 < t_2 \leq T$, where $T$ is the time given in the bootstrap argument.  Integrating by parts using the identity $e^{it8/9|\xi|^3} = \frac{9}{8i|\xi|^3}\partial_te^{it8/9|\xi|^3}$, we find that:
\begin{equation*}\begin{split}
    \left|\int_{t_1}^{t_2} \partial_t v(s) ds\right| \lesssim& \left.\frac{1}{s|\xi|^3} | \hat{f}(s)|^3 \right|_{s=t_1}^{s=t_2} + \int_{t_1}^{t_2}  |\hat{f}(s)|^3\,\frac{ds}{s^2|\xi|^3}+ \int_{t_1}^{t_2} |\partial_s B(s)| |\hat{f}(s)|^3\,\frac{ds}{s|\xi|^3}\\
    &+ \int_{t_1}^{t_2}  |\partial_s \hat{f}(s)||\hat{f}(s)|^2\,\frac{ds}{s|\xi|^3} + \int_{t_1}^{t_2} |R(\xi,s)|\;ds\\
    =:& \rmI + \rmII + \rmIII + \rmIV  + O\left(M^3\epsilon^3 (|\xi|t_1^{1/3})^{-\frac{1}{14}}\right)
\end{split}\end{equation*}
Using the definition of $B$ and the bound on $\hat{f}$, we see that
\begin{equation}\label{eqn:v-bound-1}
\rmI + \rmII + \rmIII \lesssim (M^3\epsilon^3 + M^5\epsilon^5) t_1^{-1} |\xi|^{-3}
\end{equation}
Moreover, substituting the expression given in~\eqref{eqn:hat-f-hamiltonian-dynamics} for $\partial_s f(s)$, we find that
\begin{equation}\label{eqn:v-bound-2}
    |\rmIV| \lesssim \int_{t_1}^{t_2} \frac{M^2\epsilon^2}{s|\xi|^3}\left(\frac{M^3\epsilon^3}{s} + R(\xi,s)\right) \,ds \lesssim M^5\epsilon^5 t_1^{-1} |\xi|^{-3}
\end{equation}
Taking $t_1 = \max(1, |\xi|^{-3})$ and using the fact that $|v(\xi, t_1)| = |\hat{f}(\xi, t_1)| \lesssim \epsilon$, we see that for $t \in (t_1, T)$,
\begin{equation*}
    |\hat{f}(\xi, t)| = |v(\xi, t)| \lesssim \epsilon
\end{equation*}
which closes the bootstrap for the $\mathcal{F}L^\infty$ component of the $X$ norm.  Moreover, we have shown that $v(\xi,t)$ is Cauchy as $t\to \infty$ for $\xi \neq 0$, so $v(\xi,t)$ converges as $t \to \infty$ for each fixed nonzero $\xi$.   If we write $f_\infty(\xi) = \lim_{t\to\infty} v(\xi,t)$, then writing $\hat{f}$ in terms of $v$ shows that
\begin{equation*}
    \hat{f}(\xi,t) = \exp\left(-\frac{i}{6}\sgn \xi \int_1^t \frac{|\hat{f}(\xi,s)|^2}{s}\,ds\right) f_\infty(\xi) + O(M^3\epsilon^3(t^{-1/3}|\xi|)^{-1/14})
\end{equation*}
so~\eqref{eqn:phase-rot-dynamics-f-hat} holds.  Thus, all that remains to prove~\Cref{thm:main-theorem} is to verify~\eqref{eqn:hat-f-hamiltonian-dynamics}.

\subsection{The stationary phase estimate}\label{sec:st-phase-sec}

We will prove~\eqref{eqn:hat-f-hamiltonian-dynamics} using the method of stationary phase.  Note that we can write $\partial_t \hat{f}(\xi,t)$ as
\begin{equation}
    \partial_t \hat{f}(\xi,t) =  -\frac{i}{2\pi} \int e^{-it\phi} (\xi -\eta - \sigma) \hat{f}(\eta) \overline{\hat{f}(-\sigma)} \hat{f}(\xi - \eta - \sigma)\,d\eta d\sigma
\end{equation}
The stationary points for the phase $\phi$ are given by \begin{equation}\label{eqn:stationary-pts}
    \begin{aligned}[t]
        (\eta_1, \sigma_1) =& (\xi, \xi),\\
        (\eta_3, \sigma_3) =& (\xi, -\xi),\\
    \end{aligned}\qquad\begin{aligned}[t]
        (\eta_2, \sigma_2) =& (-\xi, \xi),\\
        (\eta_4, \sigma_4) =& (\xi/3, \xi/3),
    \end{aligned}
\end{equation}
We will now divide the integral dyadically in $\eta$ and $\sigma$, and use stationary phase to estimate each piece.  Defining $j$ to be the integer with $2^{j-1} < |\xi| \leq 2^j$, let us write
\begin{equation*}
    \partial_t \hat{f}(\xi,t) = \rmI_\text{lo} + \rmI_{\text{stat}} + \sum_{\ell > j + 10} \left(\rmI_{\ell} + \tilde{\rmI}_{\ell}\right)
\end{equation*}
where 
\begin{equation*}\begin{split}
    \rmI_{\text{lo}} =&  -\frac{i}{2\pi} \int e^{-it\phi} \Psi_{\ll j,\ll j}(\eta,\sigma) (\xi -\eta - \sigma) \hat{f}(\eta) \overline{\hat{f}(-\sigma)} \hat{f}(\xi - \eta - \sigma)\,d\eta d\sigma\\
    \rmI_{\text{stat}} =&  -\frac{i}{2\pi} \int e^{-it\phi} \Psi_{\text{med}}(\eta,\sigma)  (\xi -\eta - \sigma) \hat{f}(\eta) \overline{\hat{f}(-\sigma)} \hat{f}(\xi - \eta - \sigma)\,d\eta d\sigma\\
    \rmI_{\ell} =&  -\frac{i}{2\pi} \int e^{-it\phi} (\xi -\eta - \sigma) \Psi_{\ell, \leq \ell}(\eta,\sigma) \hat{f}(\eta) \overline{\hat{f}(-\sigma)} \hat{f}(\xi - \eta - \sigma)\,d\eta d\sigma\\
    \tilde{\rmI}_{\ell} =&  -\frac{i}{2\pi} \int e^{-it\phi} (\xi -\eta - \sigma) \Psi_{< \ell, \ell}(\eta,\sigma) \hat{f}(\eta) \overline{\hat{f}(-\sigma)} \hat{f}(\xi - \eta - \sigma)\,d\eta d\sigma
\end{split}\end{equation*}
with
\begin{equation*}\begin{split}
    \Psi_{\bullet, \star}(\eta, \sigma) =& \psi_{\bullet}(\eta) \psi_{\star}(\sigma)\\
    \Psi_{\text{med}}(\eta,\sigma) =& \Psi_{\lesssim j,\lesssim j}(\eta,\sigma) - \Psi_{\ll j,\ll j}(\eta,\sigma)
\end{split}\end{equation*}
We will show how to estimate the terms $\rmI_{\text{lo}}$, $\rmI_{\text{stat}}$ and $\rmI_{\ell}$: the estimate for the $\tilde{\rmI}_{\ell}$ terms follows from similar reasoning.

\subsubsection{The estimate for \texorpdfstring{$\rmI_{\textup{lo}}$}{I\textunderscore lo}} Over the support of the integrand, $|\partial_\eta \phi| \sim 2^{2j},$ so we can integrate by parts with respect to $\eta$ to obtain
\begin{subequations}\begin{align}
    \rmI_{\text{lo}} =& \frac{i}{2\pi t} \int e^{it\phi} \frac{\xi -\eta - \sigma}{\partial_\eta \phi} \Psi_{\ll j,\ll j}(\eta,\sigma) \partial_\eta \hat{f}(\eta) \hat{f}(\xi - \eta - \sigma) \overline{\hat{f}(-\sigma)}\,d\eta d\sigma\label{eqn:I-lo-1}\\
    &+\frac{1}{4\pi t} \int e^{it\phi} \partial_\eta \left(\frac{(\xi - \eta - \sigma)}{\partial_\eta \phi} \Psi_{\ll j,\ll j}(\eta,\sigma)\right) \hat{f}(\eta) \hat{f}(\xi - \eta - \sigma) \overline{\hat{f}(-\sigma)}\,d\eta d\sigma\label{eqn:I-lo-2}\\
    &+ \{\text{similar terms}\}\notag
\end{align}\end{subequations}
For the first term, we note that $\mu^1_j = 2^{j}\frac{\xi - \eta - \sigma}{\partial_\eta \phi} \Psi_{\ll j,\ll j}(\eta,\sigma)\psi_{\sim j}(\xi)$ is a smooth symbol supported on $|\xi|, |\eta|, |\sigma| \lesssim 2^j$ which satisfies Coifman-Meyer type symbol bounds.  Thus, by~\Cref{rmk:freq-loc-symbol-bounds} and the Hausdorff-Young inequality
\begin{equation*}\begin{split}
    |\eqref{eqn:I-lo-1}| 
    \lesssim& t^{-1}2^{-j} \left\lVert T_{\mu^1_j}(Lu_{\lesssim j}, u_{\lesssim j}, \overline{u}_{\lesssim j}) \right\rVert_{L^1}\\
    \lesssim& M^3\epsilon^3 t^{-7/6} 2^{-j/2}
\end{split}\end{equation*}
Similarly, defining the symbol $\mu^2_j = 2^{2j}\partial_\eta\left(\frac{\xi - \eta - \sigma}{\partial_\eta \phi} \Psi_{\ll j,\ll j}(\eta,\sigma)\right) \psi_{\sim j}(\xi)$, we find that
\begin{equation*}\begin{split}
    |\eqref{eqn:I-lo-2}| 
    \lesssim& t^{-1}2^{-2j} \left\lVert T_{\mu^2_j}(u_{\lesssim j}, u_{\lesssim j}, \overline{u}_{\lesssim j}) \right\rVert_{L^1}\\
    \lesssim& M^3\epsilon^3 t^{-4/3} 2^{-j}
\end{split}\end{equation*}
It follows that
\begin{equation*}
    |\rmI_\text{lo}| \lesssim M^3\epsilon^3 t^{-1} \left(t^{1/3} 2^j\right)^{-1/2}
\end{equation*}
which is consistent with the estimate for the remainder term in~\eqref{eqn:hat-f-hamiltonian-dynamics}.

\subsubsection{The estimate for \texorpdfstring{$\rmI_{\ell}$}{I\_l}}

For these terms, $|\nablaes \phi| \sim 2^{2\ell}$.  Integrating by parts using the identity $\frac{1}{it|\nablaes \phi|^2} \nablaes\phi \cdot \nablaes e^{it\phi} = e^{it\phi}$, we find that
\begin{subequations}\begin{align}
    \rmI_{\ell} =& +\frac{1}{2\pi t} \int e^{it\phi} \frac{(\xi - \eta - \sigma)\partial_\eta \phi}{|\nablaes \phi|^2} \Psi_{\ell, \leq \ell}(\eta,\sigma) \partial_\eta \hat{f}(\eta) \hat{f}(\xi - \eta - \sigma) \overline{\hat{f}(-\sigma)}\,d\eta d\sigma\label{eqn:I-k-hi-1}\\
    &+\frac{1}{4\pi t} \int e^{it\phi} \nablaes \cdot \biggl(\frac{(\xi - \eta - \sigma) \Psi_{\ell, \leq \ell}(\eta,\sigma)\nablaes \phi}{|\nablaes \phi|^2}\biggr)\hat{f}(\eta) \hat{f}(\xi - \eta - \sigma) \overline{\hat{f}(-\sigma)} d\eta d\sigma\label{eqn:I-k-hi-2}\\
    &+ \{\text{similar terms}\}\notag
\end{align}\end{subequations}
The argument is now similar to the one for~\cref{eqn:I-lo-1,eqn:I-lo-2}.  Writing 
\begin{equation*}\begin{split}
    \mu^1_{\ell} =& 2^{\ell} \frac{(\xi - \eta - \sigma)\partial_\eta \phi}{|\nablaes \phi|^2} \Psi_{\ell, \leq \ell}(\eta,\sigma) \psi_{\sim \ell}(\xi - \xi_0)\\
    \mu^2_{\ell} =& 2^{2\ell} \nablaes \cdot \left(\frac{(\xi - \eta - \sigma)\partial_\eta \phi}{|\nablaes \phi|^2} \Psi_{\ell, \leq \ell}(\eta,\sigma)\right) \psi_{\sim \ell}(\xi - \xi_0)
\end{split}\end{equation*}
for some $\xi_0$ within distance $O(2^\ell)$ of $\xi$ and observing that $m^1_{\ell}$, $m^2_{\ell}$ satisfy the conditions given in~\Cref{rmk:freq-loc-symbol-bounds} uniformly in $\ell$, we find that
\begin{equation*}\begin{split}
    |\eqref{eqn:I-k-hi-1}| \lesssim& t^{-1}2^{-\ell} \lVert T_{\mu^1_{\ell}}(Lu, P^{\xi_0}_{\lesssim \ell} u, \overline{u}_{\lesssim \ell}) \rVert_{L^2}\\
    \lesssim& M^3\epsilon^3 t^{-7/6}2^{-\ell/2}\\
    |\eqref{eqn:I-k-hi-2}| \lesssim& t^{-1}2^{-2\ell} \lVert T_{\mu^2_{\ell}}(u_{\lesssim \ell}, P^{\xi_0}_{\lesssim \ell} u, \overline{u}_{\lesssim \ell}) \rVert_{L^2}\\
    \lesssim& M^3\epsilon^3 t^{-4/3}2^{-\ell}
\end{split}\end{equation*}
where $\tilde{P}^{\xi_0}_{\lesssim \ell} = \psi_{\lesssim \ell}(D - \xi_0)$.  An analogous argument holds for $\tilde{\rmI}_{\ell}$, giving us the bound
\begin{equation*}
    \left| \sum_{\ell > j + 10} \rmI_{\ell} + \tilde{\rmI}_{\ell} \right| \lesssim M^3\epsilon^3 t^{-1} \left(t^{1/3} 2^k \right)^{-1/2}
\end{equation*}
which allows us to treat these terms as remainders in~\eqref{eqn:hat-f-hamiltonian-dynamics}.

\subsubsection{The estimate for \texorpdfstring{$\rmI_{\textup{stat}}$}{I\_stat}} The integral here contains the four stationary points given in~\eqref{eqn:stationary-pts}.  Note that each of the stationary points is at a distance $\sim 2^j$ from each other.  Using this, we can write
\begin{equation*}
    \rmI_{\text{stat}} = \sum_{r=1}^4 \sum_{2^{\ell_0} \leq 2^{\ell} \ll 2^j} \left(J^{(r)}_{\ell} + \tilde{J}^{(r)}_{\ell}\right) + \{\text{remainder}\}
\end{equation*}
where 
\begin{equation*}\begin{split}
    J^{(r)}_{\ell} =&  -\frac{i}{2\pi} \int e^{-it\phi}\Psi_{\ell, \leq \ell}(\eta - \eta_r,\sigma - \sigma_r) (\xi - \eta - \sigma) \hat{f}(\eta) \hat{f}(\xi - \eta - \sigma) \overline{\hat{f}(-\sigma)}\,d\eta d\sigma\\
    \tilde{J}^{(r)}_{\ell} =&  -\frac{i}{2\pi} \int e^{-it\phi}\Psi_{< \ell, \ell}(\eta - \eta_r, \sigma - \sigma_r) (\xi - \eta - \sigma) \hat{f}(\eta) \hat{f}(\xi - \eta - \sigma) \overline{\hat{f}(-\sigma)}\,d\eta d\sigma
\end{split}\end{equation*}
for $\ell > \ell_0$, and
\begin{equation*}\begin{split}
    J^{(r)}_{\ell_0} =&  -\frac{i}{2\pi} \int e^{-it\phi}\Psi_{\leq \ell_0, \leq \ell_0}(\eta - \eta_r,\sigma - \sigma_r) (\xi - \eta - \sigma) \hat{f}(\eta) \hat{f}(\xi - \eta - \sigma) \overline{\hat{f}(-\sigma)}\,d\eta d\sigma\\
    \tilde{J}^{(r)}_{\ell_0} =&  0
\end{split}\end{equation*}
with $\ell_0$ defined such that $2^{\ell_0} \sim t^{-1/3}(t^{1/3} 2^{k})^{-\gamma}$, where $\gamma > 0$ is a constant which will be specified later.  The contribution from the remainder can be controlled using an argument similar to the one for $\rmI_{\ell}$, so we will focus on controlling the contribution from the $J^{(r)}_{\ell}$ terms.    There are two cases to consider: $\ell = \ell_0$ and $\ell > \ell_0$.

\paragraph{\indent \textbf{Case} $\ell > \ell_0$}
We first consider the bound for $J^{(r)}_{\ell}$.  Integrating by parts gives
\begin{subequations}\begin{align}
    J^{(r)}_{\ell} &= t^{-1} 2^{-\ell} e^{it\xi^3}\hat T_{\mu^1_\ell}(Lu, \tilde{P}^{\xi_0 - \eta_r - \sigma_r}_{\lesssim \ell} u, \overline{u}_{\sim j})\label{eqn:I-stat-away-1}\\
    &+ t^{-1} 2^{-2\ell} e^{it\xi^3}T_{\mu^2_{\ell}}(\tilde{P}^{\eta_r}_{\lesssim \ell} u,\tilde{P}^{\xi_0 - \eta_r + \sigma_r}_{\lesssim \ell} u, \overline{u}_{\sim j})\label{eqn:I-stat-away-2}\\
    &+ \{\text{similar terms}\}\notag
\end{align}\end{subequations}
where $\xi_0$ is any point at a distance $\ll 2^{\ell}$ from $\xi$, and the symbols $m_{\ell}^1$ and $m_{\ell}^2$ are given by
\begin{equation*}\begin{split}
    \mu_{\ell}^1 =&  -2^\ell (\xi - \eta - \sigma) \frac{\partial_\eta \phi}{|\nablaes \phi|^2} \Psi_{\ell, \leq \ell}(\eta - \eta_r,\sigma - \sigma_r) \psi_{\leq \ell} (\xi - \xi_0)\\
    \mu_{\ell}^2 =&  -2^{2\ell} \nablaes \cdot \left(\frac{(\xi - \eta - \sigma) \Psi_{\ell, \leq \ell}(\eta - \eta_r,\sigma - \sigma_r) \nablaes \phi}{|\nablaes \phi|^2}  \right) \psi_{\leq \ell} (\xi - \xi_0)
\end{split}\end{equation*}
It is clear that these symbols are supported on a region of volume $\sim 2^{3\ell}$.  Moreover, on the support of the integral we have that
\begin{equation*}\begin{split}
    |\xi - \eta - \sigma| \lesssim 2^j,\qquad 
    |\nablaes \phi| \sim 2^{j + \ell},\qquad |\partial^\alpha_{\xi,\eta,\sigma} \nablaes \phi| \lesssim 2^{(2 - |\alpha|) \ell} \qquad |\alpha| \geq 2
\end{split}\end{equation*}
where for the last inequality we have used the fact that $2^\ell \ll 2^j$.  Thus, we see that $\mu^1_\ell$ and $\mu^2_\ell$ obey the Coifman-Meyer bounds
\begin{equation*}
    |\partial_{\xi,\eta,\sigma}^\alpha \mu^1_{\ell}| +  |\partial_{\xi,\eta,\sigma}^\alpha \mu^2_{\ell}| \lesssim_\alpha 2^{-|\alpha|\ell}
\end{equation*}
and hence
\begin{equation*}\begin{split}
    \left|\eqref{eqn:I-stat-away-1}\right| \leq& t^{-1}2^{-\ell} \left\lVert{T}_{\mu_{\ell}^1}(Lu, \tilde{P}^{(\xi - \eta_r - \sigma_r)}_{\lesssim \ell}u,\overline{u}_{\sim j}) \right\rVert_{L^1}\\
    \lesssim& M^3\epsilon^3 t^{-4/3} 2^{-j/2} 2^{-\ell/2}\\
    \left|\eqref{eqn:I-stat-away-2} \right| \leq& t^{-1} 2^{-2\ell} \left\lVert {T}_{\mu_{\ell}^2}(\tilde{P}^{\eta_r}_{\lesssim \ell} u, \tilde{P}^{\xi - \eta_r - \sigma_r}_{\lesssim \ell} u, \overline{u}_{\sim j}) \right\rVert_{L^1}\\
    \lesssim& M^3\epsilon^3 t^{-3/2} 2^{-j/2} 2^{-\ell}
\end{split}\end{equation*}
Summing over $\ell > \ell_0$ yields
\begin{equation}\label{eqn:J-hi-contrib}
    \left|\sum_{\ell > \ell_0} J^{(r)}_{\ell}\right| \lesssim t^{-1} (t^{1/3} 2^j)^{-1/2 + \gamma}
\end{equation}
A similar argument gives an identical bound for the $\tilde{J}^{(r)}_{\ell}$.

\paragraph{\indent \textbf{Case} $\ell = \ell_0$}
By performing the linear change of variables $\eta \to \eta + \eta_r$, $\sigma \to \sigma + \sigma_r$, we obtain
\begin{equation*}\begin{split}
    J^{(r)}_{\ell_0} =& -\frac{i}{2\pi} \int e^{-it\phi} \psi_{\leq \ell_0}(\eta) \psi_{\leq \ell_0}(\sigma) (\xi - \eta_r - \sigma_r - \eta - \sigma) F_r(\xi,\eta,\sigma)\,d\eta d\sigma
\end{split}\end{equation*}
where
\begin{equation*}
    F_r(\xi, \eta, \sigma) = -\frac{i}{2\pi} \hat{f}(\eta+ \eta_r) \hat{f}(\xi - \eta - \eta_r - \sigma -\sigma_r) \overline{\hat{f}(-\sigma - \sigma_r)}
\end{equation*}
We can re-write this as
\begin{subequations}\begin{align}
    J^{(r)}_{\ell_0} =& \int e^{-it\phi} \Psi_{\leq \ell_0, \leq \ell_0}(\eta,\sigma)(\xi - \eta_r - \sigma_r - \eta - \sigma) \left(F_r(\xi,\eta,\sigma) - F_r(\xi,0,0)\right)\,d\eta d\sigma\label{eqn:I-stat-1}\\
    & - F_r(\xi,0,0) \int e^{-it\phi} \Psi_{\leq \ell_0, \leq \ell_0}(\eta,\sigma) (\eta + \sigma) \,d\eta d\sigma\label{eqn:I-stat-2}\\
    & + F_r(\xi,0,0)(\xi - \eta_r - \sigma_r) \int e^{-it\phi} \Psi_{\leq \ell_0, \leq \ell_0}(\eta,\sigma) \,d\eta d\sigma\label{eqn:I-stat-3}
\end{align}\end{subequations}
For~\eqref{eqn:I-stat-1}, we recall that the $L^2$ bound on $xf$ implies that $\hat{f}$ is $1/2$-H\"older, so
\begin{equation*}
    |F_r(\xi, \eta, \sigma) - F_r(\xi,0,0)| \lesssim M^3\epsilon^3 t^{1/6} (|\eta| + |\sigma|)^{1/2}
\end{equation*}
which gives us the bound 
\begin{equation*}\begin{split}
    |\eqref{eqn:I-stat-1}| \lesssim& M^3\epsilon^3 t^{1/6}\int \Psi_{\leq \ell_0, \leq \ell_0}(\eta,\sigma) (\eta + \sigma + \eta_r + \sigma_r) (|\eta| + |\sigma|)^{1/2}\,d\eta d\sigma\\
    \lesssim& M^3\epsilon^3 t^{-1} \left(t^{1/3} 2^j\right)^{1 -5/2 \gamma}
\end{split}\end{equation*}
Similarly, the $L^\infty$ bound on $\hat{f}$ from~\eqref{eqn:bootstrap-hypotheses} shows that $|F_r(\xi,0,0)| \lesssim M^3\epsilon^3$, so
\begin{equation*}\begin{split}
    \left|\eqref{eqn:I-stat-2}\right| 
    \lesssim& M^3\epsilon^3 t^{-1} \left(t^{1/3} 2^j\right)^{-3\gamma}
\end{split}\end{equation*}
The term~\eqref{eqn:I-stat-3} contains the leading order contribution to~\eqref{eqn:hat-f-hamiltonian-dynamics}.  We will extract this contribution using the method of stationary phase.  By direct calculation, we find
\begin{equation*}
    \phi(\xi,\eta+\eta_r, \sigma+\sigma_r) = \phi_r + Q_r(\eta,\sigma) + O(|\eta|^3 + |\sigma|^3)
\end{equation*}
where $\phi_r = \phi(\xi,\eta_r,\sigma_r)$ and $Q_r$ is the quadratic form associated to the Hessian matrix $\Hess_{\eta,\sigma} \phi(\xi,\eta_r,\sigma_r)$.  
Thus, $\left|e^{-it\phi} - e^{-it(\phi_r + Q_r(\eta,\sigma))}\right| \lesssim t (|\eta|^3 + |\sigma|^3)$, so
\begin{equation*}\begin{split}
    \biggl|\int \Big(e^{-it\phi} - e^{-it(\phi_r + Q_r(\eta,\sigma))}\Big) \Psi_{\leq \ell_0, \leq \ell_0}(\eta,\sigma) \,d\eta d\sigma\biggr|
    \lesssim& 2^{-j}\big( t^{\frac{1}{3}} 2^j \big)^{1 - 5\gamma}
\end{split}\end{equation*}
By rescaling and using stationary phase, we find that
\begin{equation*}\begin{split}
    \int e^{-it Q_r(\eta,\sigma)} \psi_{\leq \ell_0}(\eta) \psi_{\leq \ell_0}(\sigma) \,d\eta d\sigma =& 2^{2\ell_0}\int e^{-it2^{2\ell_0} Q_r(\eta,\sigma)} \psi_{\leq 0}(\eta) \psi_{\leq 0}(\sigma) \,d\eta d\sigma\\
    =& \frac{2\pi e^{i\frac{\pi}{4} \signature \Hess_{\eta,\sigma} \phi(\xi,\eta_r,\sigma_r)}}{t \sqrt{|\det \Hess_{\eta,\sigma} \phi(\xi,\eta_r,\sigma_r)|}}\\
    &+ O\left(t^{-2}2^{-2\ell_0}2^{-2j} \right)
\end{split}\end{equation*}
where on the last line we have used the fact that
 $|\det \Hess_{\eta,\sigma} \phi(\xi,\eta_r,\sigma_r)| \sim 2^{2j}$ to obtain the error term.  Collecting all these calculations, we find that
\begin{equation*}\begin{split}
    \eqref{eqn:I-stat-3} =&  2\pi F_r(\xi,0,0)\frac{(\xi - \eta_r - \sigma_r) e^{-it\phi_r + i\frac{\pi}{4} \signature Q_r}}{t \sqrt{|\det Q_r|}}+  O(M^3\epsilon^3 t^{-1} (t^{1/3} 2^j)^{-1/14})
\end{split}\end{equation*}
A quick calculation shows that for $r=1,2,3$, we have
\begin{align*}
    \phi_r =& 0,& \det Q_r =& -36 \xi^2,& \signature Q_r =& 0\\
    \phi_4 =& \frac{8}{9}\xi^3,&  \det Q_4 =& 12 \xi^2,&\signature Q_r =& -2 \sgn \xi
\end{align*}
Thus, taking $\gamma = 3/7$, we have shown that
\begin{equation*}\begin{split}
     \sum_{2^\ell \ll 2^j}\sum_{r=1}^4 J^{(r)}_{\ell} =&  -\frac{i}{6t}\sgn \xi | \hat{f}(\xi, t)|^2 \hat{f}(\xi, t) - e^{it\frac{8}{9}\xi^3}\frac{\sgn \xi e^{- i\frac{\pi}{2}\sgn\xi}}{3\sqrt{12}t} | \hat{f}(\xi/3, t)|^2 \hat{f}(\xi/3, t) \\
    &\qquad+  O(M^3\epsilon^3 t^{-1} (t^{1/3} 2^j)^{-1/14})
\end{split}\end{equation*}
which concludes the proof of~\eqref{eqn:hat-f-hamiltonian-dynamics}.

\subsection*{Acknowledgements}
The author would like to thank P. Germain for many helpful discussions. The author would also like to thank P. Deift and R. C\^ote for discussions about the self-similar solution.  Finally, the author would like to thank the anonymous reviewer for several suggestions which greatly improved the exposition in Section 4.

\bibliographystyle{plain}
\bibliography{main}

\begin{thebibliography}{10}

\bibitem{ambrosioGeneralChainRule1990}
L.~Ambrosio and G.~Dal~Maso.
\newblock A general chain rule for distributional derivatives.
\newblock {\em Proceedings of the American Mathematical Society},
  108(3):691--702, 1990.

\bibitem{ancoTravelingWavesConservation2012}
Stephen~C. Anco, Mohammad Mohiuddin, and Thomas Wolf.
\newblock Traveling waves and conservation laws for complex {{mKdV}}-type
  equations.
\newblock {\em Appl. Math. Comput.}, 219(2):679--698, October 2012.

\bibitem{benyaminiGeometricNonlinearFunctional2000}
Yoav Benyamini and Joram Lindenstrauss.
\newblock {\em Geometric Nonlinear Functional Analysis}.
\newblock Number v. 48- in American {{Mathematical Society}} Colloquium
  Publications. {American Mathematical Society}, {Providence, R.I}, 2000.

\bibitem{biswasOpticalSolitonsPresence2018}
Anjan Biswas and Saima Arshed.
\newblock Optical solitons in presence of higher order dispersions and absence
  of self-phase modulation.
\newblock {\em Optik}, 174:452--459, December 2018.

\bibitem{chapoutoRefinedWellPosednessResult2021}
Andreia Chapouto.
\newblock A {{Refined Well}}-{{Posedness Result}} for the {{Modified KdV
  Equation}} in the {{Fourier}}\textendash{{Lebesgue Spaces}}.
\newblock {\em J. Dynam. Differential Equations}, July 2021.

\bibitem{chapoutoRemarkWellposednessModified2021}
Andreia Chapouto.
\newblock A remark on the well-posedness of the modified {{KdV}} equation in
  the {{Fourier}}-{{Lebesgue}} spaces.
\newblock {\em Discrete. Contin. Dyn. Syst.}, 41(8):3915, 2021.

\bibitem{christAsymptoticsFrequencyModulation2003}
Francis~Michael Christ, James~E. Colliander, and Terrence Tao.
\newblock Asymptotics, {Fre\-quen\-cy} modulation, and low regularity
  ill-posedness for {Ca\-no\-ni\-cal} defocusing equations.
\newblock {\em Amer. J. Math.}, 125(6):1235--1293, 2003.

\bibitem{coifmanAuDelaOperateurs1978}
Ronald~R Coifman and Yves Meyer.
\newblock {Au dela des op\'erateurs pseudo-diff\'erentiels}.
\newblock {\em Asterisque}, 57:202, 1978.

\bibitem{collianderSharpGlobalWellposedness2003}
J.~Colliander, M.~Keel, G.~Staffilani, H.~Takaoka, and T.~Tao.
\newblock Sharp global well-posedness for {{KdV}} and modified {{KdV}} on
  {{$\mathbb{R}$}} and {$\mathbb{T}$}.
\newblock {\em J. Amer. Math. Soc.}, 16(3):705--749, 2003.

\bibitem{cordobaGlobalSolutionsGeneralized2019}
Diego C{\'o}rdoba, Javier {G{\'o}mez-Serrano}, and Alexandru~D. Ionescu.
\newblock Global {{Solutions}} for the {{Generalized SQG Patch Equation}}.
\newblock {\em Arch. Ration. Mech. Anal.}, 233(3):1211--1251, September 2019.

\bibitem{correiaAsymptoticsFourierSpace2020}
Sim{\~a}o Correia, Rapha{\"e}l C{\^o}te, and Luis Vega.
\newblock Asymptotics in {{Fourier}} space of self-similar solutions to the
  modified {{Korteweg}}-de {{Vries}} equation.
\newblock {\em J. Math. Pures Appl.}, 137:101--142, May 2020.

\bibitem{correiaSelfSimilarDynamicsModified2020}
Sim{\~a}o Correia, Rapha{\"e}l C{\^o}te, and Luis Vega.
\newblock Self-{{Similar Dynamics}} for the {{Modified Korteweg}}\textendash de
  {{Vries Equation}}.
\newblock {\em Int. Math. Res. Not. IMRN}, 2021(rnz383), January 2020.

\bibitem{deiftSteepestDescentMethod1993}
Percy Deift and Xin Zhou.
\newblock A {{Steepest Descent Method}} for {{Oscillatory Riemann}}--{{Hilbert
  Problems}}. {{Asymptotics}} for the {{MKdV Equation}}.
\newblock {\em Ann. Math.}, 137(2):295--368, March 1993.

\bibitem{deiftAsymptoticsPainleveII1995}
Percy Deift and Xin Zhou.
\newblock Asymptotics for the painlev\'e {{II}} equation.
\newblock {\em Comm. Pure Appl. Math.}, 48(3):277--337, 1995.

\bibitem{fonsecaPersistencePropertiesFractional2015}
German Fonseca, Felipe Linares, and Gustavo Ponce.
\newblock On persistence properties in fractional weighted spaces.
\newblock {\em Proc. Amer. Math. Soc.}, 143(12):5353--5367, 2015.

\bibitem{fukumotoThreedimensionalDistortionsVortex1991}
Yasuhide Fukumoto and Takeshi Miyazaki.
\newblock Three-dimensional distortions of a vortex filament with axial
  velocity.
\newblock {\em J. Fluid Mech.}, 222:369--416, January 1991.

\bibitem{germainSpacetimeResonances2010}
Pierre Germain.
\newblock Space-time resonances.
\newblock {\em Journ. \'Equ. d\'eriv. partielles}, pages 1--10, 2010.

\bibitem{germainGlobalExistenceCoupled2011}
Pierre Germain.
\newblock Global existence for coupled {{Klein}}-{{Gordon}} equations with
  different speeds.
\newblock {\em Ann. Inst. Fourier}, 61(6):2463--2506, 2011.

\bibitem{germainGlobalExistenceEulerMaxwell2014}
Pierre Germain and Nader Masmoudi.
\newblock Global existence for the {{Euler}}-{{Maxwell}} system.
\newblock {\em Ann. Sci. \'Ecole Norm. Sup.}, 47(3):469--503, 2014.

\bibitem{germainGlobalSolutions3D2008}
Pierre Germain, Nader Masmoudi, and Jalal Shatah.
\newblock Global {{Solutions}} for {{3D Quadratic Schrodinger Equations}}.
\newblock {\em Int. Math. Res. Not. IMRN}, December 2008.

\bibitem{germainGlobalSolution2D2012}
Pierre Germain, Nader Masmoudi, and Jalal Shatah.
\newblock Global solution for {{2D}} quadratic {{Schr\"odinger}} equations.
\newblock {\em J. Math. Pures Appl.}, 97(5):505--543, May 2012.

\bibitem{germainAsymptoticStabilitySolitons2016}
Pierre Germain, Fabio Pusateri, and Fr{\'e}d{\'e}ric Rousset.
\newblock Asymptotic stability of solitons for {{mKdV}}.
\newblock {\em Adv. Math.}, 299:272--330, August 2016.

\bibitem{grunrockImprovedLocalWellposedness2004}
Axel Gr{\"u}nrock.
\newblock An improved local well-posedness result for the modified {{KdV}}
  equation.
\newblock {\em Int. Math. Res. Not. IMRN}, 2004(61):3287--3308, January 2004.

\bibitem{grunrockLocalWellposednessModified2009}
Axel Gr{\"u}nrock and Luis Vega.
\newblock Local well-posedness for the modified {{KdV}} equation in almost
  critical {$\widehat{H^s_r}$}-spaces.
\newblock {\em Trans. Amer. Math. Soc.}, 361(11):5681--5694, 2009.

\bibitem{guoGlobalWellposednessKorteweg2009}
Zihua Guo.
\newblock Global well-posedness of {{Korteweg}}\textendash de {{Vries}}
  equation in $h^{-3/4}(\mathbb{R})$.
\newblock {\em J. Math. Pures Appl.}, 91(6):583--597, June 2009.

\bibitem{gustafsonScatteringTheoryGross2009}
Stephen Gustafson, Kenji Nakanishi, and Tai-Peng Tsai.
\newblock Scattering theory for the gross\textendash pitaevskii equation in
  three dimensions.
\newblock {\em Commun. Contemp. Math.}, 11(04):657--707, August 2009.

\bibitem{haniScatteringZakharovSystem2013}
Zaher Hani, Fabio Pusateri, and Jalal Shatah.
\newblock Scattering for the {{Zakharov System}} in 3 {{Dimensions}}.
\newblock {\em Commun. Math. Phys.}, 322(3):731--753, September 2013.

\bibitem{hardyInequalities1959}
Godfrey~H. Hardy, John~E. Littlewood, and George P{\'o}lya.
\newblock {\em Inequalities}.
\newblock {Cambridge University Press}, {Cambridge}, 2nd ed. edition, 1959.

\bibitem{harrop-griffithsLongTimeBehavior2016}
Benjamin {Harrop-Griffiths}.
\newblock Long time behavior of solutions to the {{mKdV}}.
\newblock {\em Comm. Partial Differential Equations}, 41(2):282--317, February
  2016.

\bibitem{harrop-griffithsSharpWellposednessCubic2020}
Benjamin {Harrop-Griffiths}, Rowan Killip, and Monica Visan.
\newblock Sharp well-posedness for the cubic {{NLS}} and {{mKdV}} in
  {$H^s(\mathbb{R})$}.
\newblock {\em arXiv}, March 2020.

\bibitem{hasimotoSolitonVortexFilament1972}
Hidenori Hasimoto.
\newblock A soliton on a vortex filament.
\newblock {\em Journal of Fluid Mechanics}, 51(3):477--485, February 1972.

\bibitem{hastingsBoundaryValueProblem1980}
S.P. Hastings and J.B. McLeod.
\newblock A boundary value problem associated with the second painlev\'e
  transcendent and the {{Korteweg}}-de {{Vries}} equation.
\newblock {\em Arch. Ration. Mech. Anal.}, 73(1):31--51, 1980.

\bibitem{hayashiModifiedKortewegVries2001}
Nakao Hayashi and Pavel Naumkin.
\newblock On the {{Modified Korteweg}}\textendash{{De Vries Equation}}.
\newblock {\em Math. Phys. Anal. Geom.}, 4(3):197--227, September 2001.

\bibitem{hayashiLargeTimeBehavior1999}
Nakao Hayashi and Pavel~I. Naumkin.
\newblock Large time behavior of solutions for the modified {{Korteweg}}-de
  {{Vries}} equation.
\newblock {\em Int. Math. Res. Not. IMRN}, 1999(8):395--418, January 1999.

\bibitem{heFewcycleOpticalRogue2014}
Jingsong He, Lihong Wang, Linjing Li, K.~Porsezian, and R.~Erd{\'e}lyi.
\newblock Few-cycle optical rogue waves: {{Complex}} modified {{Korteweg}}--de
  {{Vries}} equation.
\newblock {\em Phys. Rev. E}, 89(6):062917, June 2014.

\bibitem{huang_asymptotics_2020}
Lin Huang and Jonatan Lenells.
\newblock Asymptotics for the {{Sasa}}\textendash{{Satsuma}} equation in terms
  of a modified painlevé {II} transcendent.
\newblock {\em J. Differ. Equations}, 268(12):7480--7504, 2020.

\bibitem{huangLongtimeAsymptoticHirota2015}
Lin Huang, Jian Xu, and En-gui Fan.
\newblock Long-time asymptotic for the {{Hirota}} equation via nonlinear
  steepest descent method.
\newblock {\em Nonlinear Anal. Real World Appl.}, 26:229--262, December 2015.

\bibitem{ifrimGlobalBoundsCubic2015}
Mihaela Ifrim and Daniel Tataru.
\newblock Global bounds for the cubic nonlinear {{Schr\"odinger}} equation
  ({{NLS}}) in one space dimension.
\newblock {\em Nonlinearity}, 28(8):2661--2675, August 2015.

\bibitem{ifrimTwoDimensionalWater2016}
Mihaela Ifrim and Daniel Tataru.
\newblock Two dimensional water waves in holomorphic coordinates {{II}}: Global
  solutions.
\newblock {\em Bull. Soc. Math. France}, 144(2):369--394, 2016.

\bibitem{ionescuEinsteinKleinGordonCoupledSystem2020}
Alexandru~D. Ionescu and Benoit Pausader.
\newblock The {{Einstein}}-{{Klein}}-{{Gordon}} coupled system: Global
  stability of the {{Minkowski}} solution.
\newblock {\em arXiv}, February 2020.

\bibitem{ionescuGlobalRegularity2d2018}
Alexandru~D. Ionescu and Fabio Pusateri.
\newblock Global {{Regularity}} for 2d {{Water Waves}} with {{Surface
  Tension}}.
\newblock {\em Mem. Amer. Math. Soc.}, 256(1227), November 2018.

\bibitem{katoNewProofLongrange2011}
Jun Kato and Fabio Pusateri.
\newblock A new proof of long-range scattering for critical nonlinear
  {{Schr\"odinger}} equations.
\newblock {\em Differ. Integr. Equ.}, 24(9-10):923--940, 2011.

\bibitem{katoCauchyProblemGeneralized1983}
Tosio Kato.
\newblock On the {{Cauchy}} problem for the (generalized) {{Korteweg}}-de
  {{Vries}} equation.
\newblock In {\em Studies in Applied Mathematics}, volume~8 of {\em Advances in
  Mathematics}, pages 93--128. {Academic Press}, {New York, NY}, 1983.

\bibitem{kenigGeneralizedKortewegdeVries1989}
Carlos~E. Kenig, Gustavo Ponce, and Luis Vega.
\newblock On the (generalized) {{Korteweg}}-de {{Vries}} equation.
\newblock {\em Duke Math. J.}, 59(3):585--610, December 1989.

\bibitem{kenigWellposednessScatteringResults1993}
Carlos~E. Kenig, Gustavo Ponce, and Luis Vega.
\newblock Well-posedness and scattering results for the generalized korteweg-de
  vries equation via the contraction principle.
\newblock {\em Comm. Pure and Appl. Math.}, 46(4):527--620, 1993.

\bibitem{kenigIllposednessCanonicalDispersive2001}
Carlos~E. Kenig, Gustavo Ponce, and Luis Vega.
\newblock On the ill-posedness of some canonical dispersive equations.
\newblock {\em Duke Math. J.}, 106(3):617--633, February 2001.

\bibitem{kishimotoWellposednessCauchyProblem2009}
Nobu Kishimoto.
\newblock Well-posedness of the {{Cauchy}} problem for the {{Korteweg}}-de
  {{Vries}} equation at the critical regularity.
\newblock {\em Differential Integral Equations}, 22(5/6):447--464, May 2009.

\bibitem{klainermanNullConditionGlobal1986}
Sergiu Klainerman.
\newblock The null condition and global existence to nonlinear wave equations.
\newblock In {\em Nonlinear Systems of Partial Differential Equations in
  Applied Mathematics}, volume~1 of {\em Lectures in Applied Mathematics},
  pages 293--326. {American Mathematical Society}, {Providence, R.I}, 1986.

\bibitem{lambSolitonsMovingSpace1977}
G.~L. Lamb.
\newblock Solitons on moving space curves.
\newblock {\em Journal of Mathematical Physics}, 18(8):1654--1661, August 1977.

\bibitem{liuLongtimeAsymptoticsSasa2019}
Nan Liu and Boling Guo.
\newblock Long-time asymptotics for the {{Sasa}}\textendash{{Satsuma}} equation
  via nonlinear steepest descent method.
\newblock {\em J. Math. Phys.}, 60(1):011504, January 2019.

\bibitem{liu_long-time_2019}
Nan Liu and Boling Guo.
\newblock Long-time asymptotics for the sasa–satsuma equation via nonlinear
  steepest descent method.
\newblock {\em J. Math. Phys.}, 60(1):011504, 2019.

\bibitem{perelmanSelfsimilarPlanarCurves2007}
G.~Perelman and L.~Vega.
\newblock Self-similar planar curves related to modified
  {{Korteweg}}\textendash de {{Vries}} equation.
\newblock {\em J. Differential Equations}, 235(1):56--73, April 2007.

\bibitem{rodriguezStandardEmbeddedSolitons2003}
R.~F. Rodr{\'i}guez, J.~A. Reyes, A.~{Espinosa-Cer{\'o}n}, J.~Fujioka, and
  B.~A. Malomed.
\newblock Standard and embedded solitons in nematic optical fibers.
\newblock {\em Phys. Rev. E}, 68(3):036606, September 2003.

\bibitem{scorerNumericalEvaluationIntegrals1950}
R.~S. Scorer.
\newblock Numerical evaluation of integrals of the form {$I = \int_{x_1}^{x_2}
  f(x) e^{i\phi(x)} dx$} and the tabulation of the function
  {$\operatorname{Gi}(z) = ( 1/\pi) \int_0^\infty \sin(uz +
  \frac{1}{3}u^3)du$}.
\newblock {\em Q J Mech Appl Math}, 3(1):107--112, January 1950.

\bibitem{shatahNormalFormsQuadratic1985}
Jalal Shatah.
\newblock Normal forms and quadratic nonlinear {{Klein}}-{{Gordon}} equations.
\newblock {\em Comm. Pure Appl. Math.}, 38(5):685--696, 1985.

\bibitem{stewartAblowitzLadik}
Gavin Stewart.
\newblock Asymptotics for small data solutions of the {Ablowitz}-{Ladik}
  equation.
\newblock In preparation.

\bibitem{taoNonlinearDispersiveEquations2006}
Terence Tao.
\newblock {\em Nonlinear Dispersive Equations: Local and Global Analysis}.
\newblock Number no. 106 in Conference {{Board}} of the {{Mathematical
  Sciences}} Regional Conference Series in Mathematics. {American Mathematical
  Society}, {Providence, R.I}, 2006.

\bibitem{zhangSpectralAnalysisLongtime2022}
Hong-Yi Zhang and Yu-Feng Zhang.
\newblock Spectral analysis and long-time asymptotics of complex {{mKdV}}
  equation.
\newblock {\em J. Math. Phys.}, 63(2):021509, February 2022.

\bibitem{zworskiSemiclassicalAnalysis2012}
Maciej Zworski.
\newblock {\em Semiclassical {{Analysis}}}, volume 138 of {\em Graduate
  {{Studies}} in {{Mathematics}}}.
\newblock {American Mathematical Society}, July 2012.

\end{thebibliography}

\end{document}